\documentclass[oneside]{amsart}

\usepackage{amsmath}
\usepackage{mathtools}
\usepackage{amssymb}
\usepackage{amsthm}
\usepackage{color}
\usepackage{graphicx}
\usepackage{tikz-cd}
\usepackage{tikzoverlay}
\usetikzlibrary{calc}
\usetikzlibrary{decorations.pathreplacing}
\usepackage{xcolor}
\usepackage{aliascnt}
\usepackage[margin=1in]{geometry}
\usepackage[hidelinks]{hyperref}
\usepackage[alphabetic,msc-links]{amsrefs}
\usepackage[capitalize]{cleveref}
\usepackage{float}
\usepackage{subcaption}
\usepackage{comment}

\definecolor{figred}{cmyk}{0,.8,1,0}
\definecolor{figyellow}{cmyk}{.1,.05,.9,0}
\definecolor{figorange}{cmyk}{0,.5,1,0}
\definecolor{figblue}{cmyk}{1,.5,0,0}
\definecolor{figmagenta}{cmyk}{.1,.7,0,0}
\definecolor{figcyan}{cmyk}{.8,0,0,0}
\definecolor{figgreen}{cmyk}{.97,0,.75,0}

\let\oldtheorem\newtheorem
\RenewDocumentCommand{\newtheorem}{s m o m O{}}{\IfBooleanTF{#1}{\oldtheorem*{#2}{#4}}{\IfNoValueTF{#3}{\oldtheorem{#2}{#4}[#5]}{\newaliascnt{#2}{#3}\oldtheorem{#2}[#2]{#4}\aliascntresetthe{#2}}}}

\makeatletter
\newtheorem*{rep@theorem}{\rep@title}
\newcommand{\newreptheorem}[2]{\newtheorem*{rep@#1}{\rep@title}\newenvironment{rep#1}[1]{\def\rep@title{#2 \ref*{##1}}\begin{rep@#1}}{\end{rep@#1}}}
\makeatother

\newtheorem{maintheorem}{Theorem} 
\newreptheorem{maintheorem}{Theorem}

\newtheorem{maincorollary}[maintheorem]{Corollary} 
\newreptheorem{maincorollary}{Corollary}

\newtheorem{theorem}{Theorem}[section]
\newtheorem{lemma}[theorem]{Lemma}
\newtheorem{proposition}[theorem]{Proposition}
\newtheorem{corollary}[theorem]{Corollary}

\theoremstyle{definition}

\theoremstyle{remark}
\newtheorem{remark}[theorem]{Remark}

\DeclareMathOperator{\Sph}{\mathcal{S}}
\DeclareMathOperator{\cliq}{\mathcal{K}}
\DeclareMathOperator{\Out}{\mathrm{Out}}
\DeclareMathOperator{\Aut}{\mathrm{Aut}}
\DeclareMathOperator{\IO}{\mathrm{IO}}
\DeclareMathOperator{\Mod}{\mathrm{Mod}}

\DeclareMathOperator{\Twist}{\mathrm{Twist}}
\DeclareMathOperator{\CD}{\mathrm{CD}}
\DeclareMathOperator{\HD}{\mathrm{HD}}
\DeclareMathOperator{\Hom}{\mathrm{Hom}}

\title{Coloring graphs of homologous spheres}
\address{San Jos\'{e} State University, One Washington Square, San Jose, CA, 95112}
\author{Edgar A. Bering IV}
\email{edgar.bering@sjsu.edu}
\author{Samantha Ford}
\email{samantha.ford@sjsu.edu}
\author{Aarav Ghai}
\email{aaravmanav.ghai@sjsu.edu}
\author{Maxwell Lyon}
\email{maxwell.lyon@sjsu.edu}
\author{Om Shah}
\email{om.shah@sjsu.edu}

\begin{document}

\begin{abstract}
The curve graph of a surface is the graph whose vertices are isotopy classes of essential, closed curves on the surface.  Edges join vertices with disjoint representatives. The curve graph plays a central role in the theory of mapping class groups of surfaces. Gaster, Greene, and Vlamis have shown that the subgraph of the curve graph induced by curves in a single primitive homology class is uniquely and finitely colorable. In the analogy between the theory of mapping class groups of surfaces and outer automorphisms of free groups, the sphere graph of $M_r$, a connect sum of $r$ copies of $S_1 \times S_2$ plays an analogous role in the theory. Motivated by Gaster, Green, and Vlamis' result, we investigate $\Sph_v(M_r)$, the subgraph of the sphere graph of $M_r$ induced by spheres representing a single primitive homology class $[v] \in H_2(M_r;\mathbb{Z})$. We show that this graph is uniquely $r-1$ colorable. As a corollary, with $r\ge 3$ we obtain a nontrivial homomorphism from the Torelli subgroup $IO_r\le \Out(F_r)$ to the symmetric group on $r-1$ symbols; this parallels Gaster, Greene, and Vlamis' connection between their coloring and the Chillingworth homomorphism. In the course of our proof we find new smaller generating sets for mapping class groups of $M_r$ with spheres deleted. 
\end{abstract}

\maketitle

\section{Introduction}

The sphere graph of a connect sum of $r$ copies of $S^1\times S^2$, isomorphic to the graph of one-edge free splittings of $F_r$~\cite{aramayona-souto}*{Lemma 2}, plays an important role in the theory of outer automorphism groups of free groups---analogous to the role of the curve graph of a surface. The study of the large scale geometry of this graph has led to significant progress in the area~\cites{hm1,hm2,bf-ff,subfactor}. In addition to this role the sphere graph is a rich combinatorial object, and there is increasing interest in its graph theoretic properties, including its chromatic number~\cites{aramayona-souto,bering-leininger,bering-colors}.

In the surface setting, Gaster, Greene, and Vlamis established the best known upper bound on the chromatic number of the curve graph of a surface~\cite{gaster}*{Theorem 1.5}. A key component of their argument studies the subgraph of the curve graph induced by a single primitive homology class. They find that this subgraph has a \emph{unique} proper coloring realizing its chromatic number~\cite{gaster}*{Theorem 1.4}. Following the long running analogy between mapping class groups and outer automorphism groups, we obtain an analogous unique coloring result for spheres in a single primitive homology class.

\begin{maintheorem}\label{main theorem}
Let $[v] \in H_2(M_r)$ be a primitive homology class, with $r\ge 2$. The graph $\Sph_v(M_r)$ of spheres in the class $[v]$ is uniquely $r-1$ colorable.
\end{maintheorem}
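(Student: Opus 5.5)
We have to prove two things: $\Sph_v(M_r)$ admits a proper $(r-1)$-coloring, and any proper $(r-1)$-coloring agrees with it up to permuting the colors. The plan is to build the coloring from the complement of a sphere, get the lower bound from a clique, and get uniqueness from connectivity of an auxiliary graph, using the mapping class group action throughout.

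\emph{Complements and the color.} Let $S$ be a sphere in the primitive class $[v]$. Then $S$ is nonseparating, and $M_r \setminus S$ is a copy of $M_{r-1}$ with two balls removed, bounded by $S^{+}$ and $S^{-}$. Take two disjoint spheres $S, S'$, both in $[v]$. Since $[S]=[S']$, together they cut $M_r$ into two pieces $P_1$ and $P_2$, each with boundary $S \sqcup S'$. Write $P_i \cong M_{g_i}$ minus two balls. Then $g_1 + g_2 = r-1$. Neither piece is a product $S^2\times I$, since otherwise $S$ and $S'$ would be isotopic; so $g_1, g_2 \ge 1$.

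Fix a reference sphere $S_0$ and a transverse orientation on spheres in $[v]$, which the class determines. I would try to define the color $c(S)$ as the genus of the region lying between $S_0$ and $S$ on the positive side, taken mod $r-1$. To make this well defined for intersecting spheres, I would lift to the infinite cyclic cover $\widetilde M \to M_r$ dual to $[v]$, the cover corresponding to $\pi_1 \to H_1 \to \mathbb{Z}$ via the intersection number with $[v]$. Each sphere $S$ lifts to a separating sphere $\widetilde S$ in $\widetilde M$. Fix a fundamental domain structure, so that $\widetilde M$ is a chain of copies of $M_{r-1}$ minus two balls. Define a ``height'' $h(S)$ as the number of $S^1\times S^2$ summands lying between a base lift and $\widetilde S$. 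This can be made precise via the rank of $H_1$ of the region cut off, or via the lengths of free factors in the corresponding free splitting. Changing the lift changes $h$ by $r-1$, so $c(S)=h(S) \bmod (r-1)$ is well defined.

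\emph{Properness and the lower bound.} For disjoint $S, S'$ the lifts can be chosen disjoint, and then $h(S')-h(S)=g_1$ with $1\le g_1\le r-2$. Hence $c(S)\ne c(S')$ and the coloring is proper. For the lower bound, take a chain of $r-1$ pairwise disjoint nonisotopic spheres in $[v]$ that cut $M_r$ into $r-1$ pieces, each $S^1\times S^2$ minus two balls. This chain is a clique of size $r-1$, so $\chi = r-1$.

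\emph{Uniqueness.} Uniqueness is the main obstacle. In a maximal clique $\{S_1,\dots,S_{r-1}\}$ as above, all colors appear, and each $S_i$ is the unique member of the clique with its color. It therefore suffices to show that the graph whose vertices are maximal cliques, with edges between cliques sharing $r-2$ spheres, is connected, and that moves along its edges force colors. Suppose $K$ and $K'$ differ by swapping $S_i$ for $S_i'$, both lying in the region between $S_{i-1}$ and $S_{i+1}$, which is $M_2$ minus two balls. Then $S_i'$ is disjoint from the other $r-2$ spheres, so it must receive the single remaining color, namely that of $S_i$. Every sphere lies in some maximal clique, because one can cut its complement into pieces. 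So connectivity of the clique graph propagates any coloring from one clique to all spheres, giving uniqueness up to permutation.

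To prove this connectivity, I would show that the stabilizer of $[v]$ in $\Mod(M_r)$ (modulo twists) acts transitively on maximal cliques. I would then show that it is generated by elements each of which moves a fixed clique $K_0$ to a clique adjacent to $K_0$, or fixes it. This is where I would need explicit generators for mapping class groups of $M_r$ with spheres deleted: generators supported in a single $M_2$-minus-two-balls region, plus the shift along the chain. The case $r=2$ is trivial, since there is one color and the graph must have no edges, which follows from $g_1+g_2=1$ being impossible. I expect producing a small enough generating set, with each generator supported near one sphere of $K_0$, to be the hard step. It would be proved by a Laudenbach/Nielsen-type argument restricting Whitehead automorphisms.
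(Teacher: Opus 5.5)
Your coloring (heights of lifts in the infinite cyclic cover, reduced mod $r-1$), the clique lower bound, and your reduction of uniqueness all agree with the paper. The reduction is: every sphere lies in an $(r-1)$-clique, and the graph of $(r-1)$-cliques is connected. The paper phrases your height as the rank of an immersion $M_{h,2}\looparrowright M_r$, i.e.\ the projection of a compact region of the cyclic cover.

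The gap is the connectivity of $\cliq(\Sph_v(M_r))$. It carries the entire content of uniqueness, and you do not prove it. Your proposed route is to show that the stabilizer of $[v]$ is generated by $\mathrm{Stab}(K_0)$ together with elements carrying $K_0$ to an adjacent clique (classes supported in one $M_{2,2}$ region, plus the shift). This is not a reduction. Because that stabilizer acts transitively on maximal cliques, the generation statement is \emph{equivalent} to connectivity of the clique graph. One direction is Putman's lemma; the other is the usual argument lifting a path $K_0=C_0,\dots,C_n=gK_0$ to a word in such elements.

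So unless you produce that generating set independently, the argument is circular, and ``a Laudenbach/Nielsen-type argument restricting Whitehead automorphisms'' does not do so. You have no generating set of this local form for the stabilizer of a homology class in $\Mod(M_r)$, and there is no reason natural generators behave this way. For instance, the paper handles the commutator drag $\CD^+_{i1k}$ by a path $S_1,T,\CD^+_{i1k}(S_1)$ of length two through an auxiliary sphere $T$, not by adjacency. Note also that Putman's lemma only needs a \emph{path} from $K_0$ to $sK_0$, so demanding adjacency for every generator asks for more than necessary.

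The missing idea is to split the connectivity so that each piece involves a group whose generators are known.
\begin{enumerate}
\item $\Sph_v(M_r)$ is connected. Apply Putman's lemma to the transitive action of $\IO_r$, with the Magnus generators realized as handle drags $\HD_{ij}$ and commutator drags $\CD^+_{ijk}$. Each of these moves $S_1$ by a path of length at most two.
\item The link of a vertex $a$ is $\Sph_\partial(M_{r-1,2})$, and its clique graph is connected by Putman's lemma for $\Mod(M_{r-1,2})$. The Birman exact sequence, the Nielsen generators of $\Aut(F_{r-1})$ and the twist-transfer identities reduce the generating set to sphere twists, inversions, adjacent right Nielsen diffeomorphisms and a single boundary drag. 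These are exactly the ``local'' generators you were looking for: each fixes the standard clique or moves it to an adjacent one.
\item Track cliques along a path $a_0,\dots,a_m$ in $\Sph_v(M_r)$. Use connectivity of the clique graph of $\operatorname{Lk}(a_i)$ to pass between consecutive maximal cliques containing $a_i$.
\end{enumerate}
In short, your local generators live in the mapping class group of the complement of one sphere. The motion of that sphere itself has to come from a separate connectivity argument, and your proposal lacks one.
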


\begin{proof}
\cref{existence proposition} establishes that the chromatic number of $\Sph_v(M_r)$ is $r-1$, by constructing a coloring equal to the size of a maximal clique. \cref{uniqueness proposition} shows this coloring is unique up to permutation of colors, by using Putman's lemma (\cref{Putman connectivity}) to establish the connectivity of the graph of homologous spheres and its clique graph.
\end{proof}

Gaster, Greene, and Vlamis connect this unique coloring to an interpretation of the \emph{Chillingworth homomorphism}~\cite{gaster}*{Theorem 1.6}. Fix a closed oriented surface $S$ of genus $g\ge 2$ and consider \[\mathcal{I} = \ker(\Mod(S)\to H_1(S,\mathbb{Z})),\] the \emph{Torelli group} of mapping classes that act trivially on homology. For a fixed primitive homology class $[c]$, $\mathcal{I}$ acts on the subgraph of the curve graph induced by all simple closed curves representing $[c]$. As the graph is uniquely properly colorable, this induces an action of $\mathcal{I}$ on the color set by permutation. Gaster, Greene, and Vlamis show that this action is cyclic and equal to the \emph{Chillingworth homomorphism}, $t : \mathcal{I} \to \Hom(H_1(S,\mathbb{Z}), \mathbb{Z}/(g-1)\mathbb{Z})$; described here as the mod $(g-1)$ reduction of the Johnson homomorphism~\cites{chillingworth-i,chillingworth-ii,johnson-hom}.

The analog of the Torelli group in the $\Out(F_r)$ setting is the subgroup $\IO_r$, the kernel of the natural action on $H_1(F_r,\mathbb{Z})$. (Note that $IO_r$ is often called $IA_r$ elsewhere in the literature, while some authors reserve the latter for the corresponding subgroup of $\Aut(F_r)$; we agree with the second convention.) As a corollary of our result, we obtain a homomorphism from $\IO_r$ to the symmetric group. 

\begin{maincorollary}\label{main corollary}
If $r\ge 3$, there is a nontrivial homomorphism
\[ \xi_v \colon \IO_r \to S_{r-1}. \]
\end{maincorollary}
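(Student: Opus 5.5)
The plan is to build $\xi_v$ from the action of $\IO_r$ on $\Sph_v(M_r)$ together with the uniqueness in \cref{main theorem}, and then to show it is nontrivial by exhibiting one element of $\IO_r$ that carries some sphere in the class $[v]$ to a disjoint sphere in the same class.

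\emph{Construction.} The mapping class group $\Mod(M_r)$ surjects onto $\Out(F_r)$, and the kernel is generated by sphere twists. Sphere twists act trivially on isotopy classes of embedded spheres, so $\Out(F_r)$ acts on the sphere graph by simplicial automorphisms. By the Hurewicz theorem and Poincaré duality, $H_2(M_r;\mathbb{Z})\cong \Hom(H_1(F_r),\mathbb{Z})$, and this identification is $\Out(F_r)$-equivariant. Hence $\IO_r$ acts trivially on $H_2(M_r)$. It therefore preserves the set of spheres representing $[v]$ and acts on the induced subgraph $\Sph_v(M_r)$ by graph automorphisms.

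Let $c\colon \Sph_v(M_r)\to\{1,\dots,r-1\}$ be the proper coloring from \cref{main theorem}. For $\varphi\in\IO_r$, the map $c\circ\varphi^{-1}$ is again a proper $(r-1)$-coloring. By uniqueness, there is a unique permutation $\sigma_\varphi\in S_{r-1}$ with $c\circ\varphi^{-1}=\sigma_\varphi\circ c$. Uniqueness of $\sigma_\varphi$ uses that $c$ is surjective, which holds because a maximal clique has $r-1$ vertices. Uniqueness also shows that $\varphi\mapsto\sigma_\varphi$ is a homomorphism, and we set $\xi_v(\varphi)=\sigma_\varphi$. Because the action preserves adjacency, the permutation satisfies $\sigma_\varphi(c(S))=c(\varphi S)$.

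\emph{Nontriviality.} It suffices to find $\varphi\in\IO_r$ and a sphere $S$ in the class $[v]$ such that $\varphi S$ is disjoint from $S$ and not isotopic to it. Then $S$ and $\varphi S$ are adjacent in $\Sph_v(M_r)$, so $c(\varphi S)\neq c(S)$ and $\sigma_\varphi\neq 1$.

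Since $\Out(F_r)$ acts transitively on primitive classes, we may assume $[v]$ is dual to $x_1$ for a basis $x_1,\dots,x_r$. Let $S$ be the sphere corresponding to the HNN splitting $F_r=\langle x_2,\dots,x_r\rangle *_{x_1}$. Take $\varphi$ to be the partial conjugation $x_2\mapsto x_1x_2x_1^{-1}$ fixing all other $x_i$. This lies in $\IO_r$. The sphere $\varphi S$ is the HNN splitting with vertex group $\langle x_1x_2x_1^{-1},x_3,\dots,x_r\rangle$ and stable letter $x_1$.

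To see that the two splittings are compatible, consider a two-edge graph of groups. It has a vertex $u$ with group $\langle x_3,\dots,x_r\rangle$ and a vertex $w$ with group $\langle x_2\rangle$. It has an edge $e$ from $u$ to $w$ with trivial twisting, and an edge $f$ from $w$ to $u$ with stable letter $x_1$. Collapsing $e$ recovers $S$, and collapsing $f$ recovers $\varphi S$. Hence $S$ and $\varphi S$ are realized disjointly as the two spheres of this two-edge splitting. Both are nonseparating, and both are dual to $x_1$.

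This is exactly where $r\ge 3$ is needed. The vertex group at $u$ must be nontrivial so that $e$ is a genuine edge and the two splittings are distinct, not isotopic. When $r=2$ this fails, and indeed $S_1$ is trivial.

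The main obstacle is this compatibility check. We must confirm that the refinement really is a two-edge free splitting whose two one-edge collapses are precisely the splittings of $S$ and $\varphi S$, and that these are not equivalent. I would verify this directly by computing the fundamental group of the graph of groups and the induced basis after each collapse. Alternatively, one can draw $S$ and $\varphi S$ in $M_r$ as the boundary spheres of a neighborhood obtained by sliding the $x_2$ handle along $x_1$.
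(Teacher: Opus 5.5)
Your construction of $\xi_v$ is essentially the paper's. Your nontriviality argument is correct but takes a different route.

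The paper argues abstractly. $\IO_r$ acts transitively on $\Sph_v(M_r)$ (\cref{IO orbits of spheres}), and for $r\ge 3$ there are at least two color classes, so some element of $\IO_r$ carries a sphere of one color to a sphere of another.

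You instead produce one explicit adjacency $S\sim\varphi S$ with $\varphi=M_{21}$. Your nontriviality step therefore needs no transitivity, although the uniqueness theorem you invoke was itself proved using it. It also says more. The Magnus generator $M_{21}$ itself acts nontrivially. Moreover, $S$ and $\varphi S$ cobound a copy of $M_{1,2}$ (your vertex $w$) on one side and $M_{r-2,2}$ on the other. So $M_{21}$ shifts the color of $S$ by $\pm 1$ in the rank coloring of \cref{upper bound}.

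The cost is the free-splitting computation, which does go through.
\begin{itemize}
\item With maximal tree $e$, collapsing $f$ gives vertex group $\langle x_1^{\mp1}x_2x_1^{\pm1},x_3,\dots,x_r\rangle$ with stable letter $x_1$, the sign depending on conventions. This is $\varphi^{\pm1}S$, which suffices since $\varphi^{-1}\in\IO_r$ and the action preserves adjacency.
\item This splitting is not $S$. The elements $x_3$ and $x_1^{\mp1}x_2x_1^{\pm1}$ fix distinct single vertices of the Bass--Serre tree of $S$, so the new vertex group is not a vertex stabilizer there. This is precisely where $r\ge 3$ enters.
\end{itemize}
Disjointness is also visible in the paper's \cref{fig:handle-drag-Mi1}, since $\varphi$ is realized by $\HD_{21}$. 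Your argument supplies the non-isotopy that the paper leaves there as ``isotopic or adjacent.''

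One small slip. With your definition $c\circ\varphi^{-1}=\sigma_\varphi\circ c$, you get $\sigma_{\varphi\psi}=\sigma_\psi\sigma_\varphi$ and $c(\varphi S)=\sigma_\varphi^{-1}(c(S))$. So $\varphi\mapsto\sigma_\varphi$ is an anti-homomorphism, and your formula $\sigma_\varphi(c(S))=c(\varphi S)$ is off by an inverse. Define $\sigma_\varphi$ by $c\circ\varphi=\sigma_\varphi\circ c$ instead, or take $\sigma_\varphi^{-1}$. Nontriviality is unaffected.
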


\begin{proof}
The action of $\IO_r$ is trivial on $H_2(M_r)$. Thus $\IO_r$ acts naturally on $\Sph_v(M_r)$, and by the uniqueness of the proper $r-1$ coloring this induces an action of $\IO_r$ on the set of colors. Moreover, the action on $\Sph_v(M_r)$ is transitive. This is implicit in Landgraf~\cite{landgraf}*{Lemma 7.3}; we record the proof explicitly as \cref{homologous sphere graph connected}. Since $r\ge 3$ there is more than one color class. Thus there is some element of $\IO_r$ that moves a sphere from one color class to another. Therefore the homomorphism is nontrivial.
\end{proof}

As is often the case, pursuing the analogy is not a direct translation. To interpret the image of the color permutation homomorphism in the surface setting, Gaster, Greene, and Vlamis rely heavily on surface tools not immediately applicable to the 3--manifold setting. Nevertheless our result raises the question of whether there is a meaningful analog of the Chillingworth homomorphism in the $\Out(F_r)$ setting. Pursuing this direction will be the content of the second author's Master's thesis.

The proof of \cref{main theorem} relies on understanding the connectivity of the graph of spheres in a single homology class, as well as its maximal clique graph. We establish this connectivity using Putman's lemma~\cite{Putman}*{Lemma 2.1}, which has been applied to many variations of the curve graph. To use Putman's lemma one needs a group action with a well-understood generating set. For mapping class groups of surfaces, this is a mature subject. For our purposes, we develop some further details about generators for $\Mod(M_{r,k})$, the mapping class group of a connect sum of $r$ copies of $S^1\times S^2$ with $k$ disjoint open balls removed, in \cref{generators section}. Our analysis builds on work of Landgraf~\cite{landgraf} and Brendle, Broaddus, and Putman~\cite{BrendleBroaddusPutman} in describing generating sets of these groups. The groups $\Mod(M_{r,k})$ arise in the study of homological stability for $\Aut(F_r)$ and $\Out(F_r)$: they provide 3--manifold models of the automorphism groups of graphs with boundary first introduced by Wahl~\cites{HatcherWahl,wahl,jensen-wahl,hatcher-vogtmann}.

As mentioned in the opening paragraph, the sphere graph is isomorphic to the free splitting graph. In their work on the automorphism group of the sphere graph, Aramayona and Souto describe how to prove their main theorem entirely from the point of view of free splittings, without using 3--manifold topology~\cite{aramayona-souto}*{Postscriptum}. They also remark ``by interpreting free splittings as isotopy classes of spheres, one can think of free splittings of free groups almost in the same way as one thinks of elements in the curve complex of a surface.'' The subgraph of the sphere graph induced by a single primitive homology class can also be defined entirely in terms of free splittings. Any one-edge free splitting determines a primitive cohomology class $[\alpha] \in H^1(F_r,\mathbb{Z})$ by taking signed intersection with the midpoint of the edge; indeed, this class is Poincaré dual to the homology class of the sphere $v$ representing the splitting. The subgraph of the free splitting graph induced by the splittings that induce the cohomology class $[\alpha]$ is isomorphic to $\Sph_v(M_r)$. However, unlike the manipulations in Aramayona and Souto's paper, it is not clear to us how to reframe the sphere manipulations in this work in purely algebraic terms. In particular, understanding the actions used to establish connectivity from the free splitting perspective seems quite challenging: We have found it much easier to study the action of a diffeomorphism on a sphere than of a graph homotopy equivalence on a graph of groups, and to locate a disjoint sphere in an appropriate diagram than to find an intermediate graph of groups that shares a common refinement with two given graphs of groups.

\subsection*{Notational conventions and terminology} A \emph{proper vertex coloring} (or just \emph{proper coloring}) of a graph $G$ is a surjective function $\phi \colon V(G) \to X$ such that for every edge $(u,v)\in E(G)$, $\phi(u)\neq \phi(v)$. The cardinality $|X|$ is the size of the coloring. The \emph{chromatic number} of $G$, denoted $\chi(G)$, is the minimal size of a proper vertex coloring of $G$. A graph is \emph{uniquely colorable} for a proper coloring set $X$ if for any two proper $X$-colorings $\phi, \psi \colon V(G) \to X$, there is a permutation $\sigma \in S_X$ such that $\phi = \sigma\circ\psi$. Given a subset $W \subset V(G)$ the \emph{induced subgraph of $G$ on $W$} is the graph $H$ with $V(H) = W$ and an edge $(u,v) \in E(H)$ for $u,v\in W$ if and only if $(u,v) \in E(G)$.

For $r \ge 0$ we define $M_r = M_{r, 0} = \sharp_r S^1\times S^2$ to be the connect sum of $r$ copies of $S^1\times S^2$ with the convention $M_0 = S^3$. For $n\ge 0$ we define $M_{r,n} = M_r \setminus \sqcup_n B$ to be $M_r$ with $n$ disjoint open balls removed. A sphere embedded in a 3--manifold is \emph{essential} if it does not bound a ball and \emph{non-peripheral} if it is not isotopic to a boundary component. The \emph{sphere graph} of a 3--manifold M is the graph $\Sph(M)$ with vertex set the isotopy classes of embedded essential non-peripheral spheres in $M$. Two classes are joined by an edge if they have disjoint representatives. For brevity, we will refer to vertices of $\Sph(M)$ as \emph{spheres in $M$} when no ambiguity occurs. Any primitive homology class $[v]\in H_2(M_r,\mathbb{Z})$ is represented by an embedded essential sphere $v$~\cite{landgraf}*{Lemma B.2}, and we will consider the subgraph induced by spheres homologous to $v$, denoted $\Sph_v(M_r)$ and referred to as the \emph{graph of homologous spheres} for brevity. The choice of primitive homology class $[v]$ gives an orientation to every vertex of $\Sph_v(M_r)$ and we adopt the convention that all representatives of vertices are oriented with the orientation given by $[v]$.

For an embedded essential sphere $v$ in $M$, the complement $M \setminus N(v)$ for a regular of $v$ that deformation retracts onto $v$ is referred to as the result of \emph{cutting $M$ along $v$}. When specifying a cut we use $N(v)$ to mean an appropriate neighborhood. This notion naturally extends to finite collections of disjoint embedded spheres. The result $M\setminus N(v)$ has two new boundary components, the orientation of $v$ identifies the boundaries with labels $v^-$ and $v^+$, and $M$ is diffeomorphic to the quotient of the cut manifold identifying $v^-$ and $v^+$.

\subsection*{Graphical conventions for \texorpdfstring{$M_{r,n}$}{M r,n}} We use two graphical conventions to diagram spheres inside $M_{r,n}$, depending on expositional needs. A \emph{cut-system} in $M_{r,n}$ is a disjoint union of embedded spheres $C$ such that $M_{r,n}\setminus N(C) = M_{0,n+2r}$ for a sufficiently small regular neighborhood $N(C)$. The manifold $M_{r,n}$ is recovered from $M_{0,n+2r}$ by identifying the pair of boundary spheres in $M_{0,n+2r}$ corresponding to each sphere in $C$. A \emph{cut-sphere diagram} of $M_{r,n}$ is a stereographic projection of $S^3\setminus\{n+2r\text{ open balls}\}$. In this diagram boundary spheres are drawn as spheres, with pairs of labels indicating the identification of the cut-system. Spheres other than the cut system are drawn either as embedded spheres in $M_{0,n+2r}$ or as a collection of disks and cylinders with boundaries on the cut-system spheres; identification labels are omitted when clear from context. See \cref{cut-sphere-sample} for an example. The second graphical convention is a \emph{handlebody diagram}. The manifold $M_r$ has a standard Heegaard splitting into two genus $r$ handlebodies. Spheres are drawn as collections of disks and cylinders embedded in the two handlebodies. Often, the diagram in each handlebody is identical. In this case we show only one and the manifold and embedded spheres are obtained as the appropriate doubles. To draw a handlebody diagram of $M_{r,n}$, boundary spheres are either shown as embedded in one of the handlebodies for $M_{r,0}$, or as disks on the boundary of the handlebody, indicating a relative Heegaard splitting. See \cref{handle-diagram-sample} for an example.

\begin{figure}
\centering
\begin{subfigure}[c]{0.49\textwidth}
\begin{tikzoverlay}[scale=1.0]{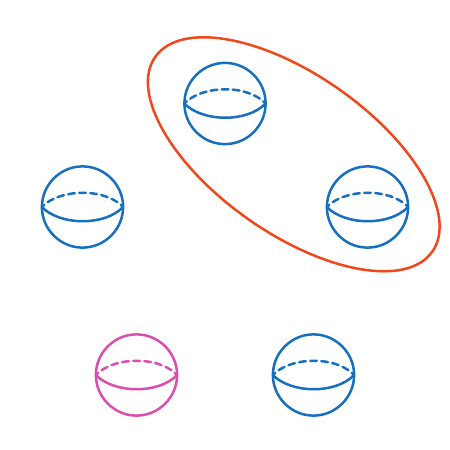}[font=\small]
    \node[figblue] at (19.2,48.1) {$a^+$};
    \node[figblue] at (51.0,71.1) {$a^-$};
    \node[figblue] at (82.7,48.1) {$b^+$};
    \node[figblue] at (71,10.6) {$b^-$};
    \node[figmagenta] at (30.6,10.6) {$\partial$};
\end{tikzoverlay}
\caption{The cut-sphere diagram.}
\label{cut-sphere-sample}
\end{subfigure}
\begin{subfigure}[c]{0.49\textwidth}
\begin{tikzoverlay}[scale=1.0]{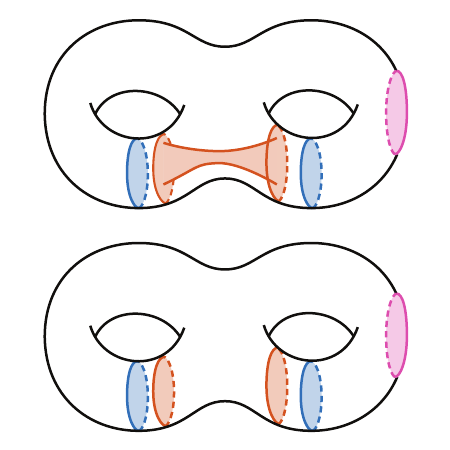}[font=\small]
    \node[figmagenta] at (88.0,75.5) {$\partial$};
    \node[figblue] at (30.3,50.5) {$a$};
    \node[figblue] at (69.2,50.5) {$b$};
    \node[figblue] at (30.3,1.5) {$a$};
    \node[figblue] at (69.0,1.5) {$b$};
\end{tikzoverlay}
\caption{The handlebody diagram.}
\label{handle-diagram-sample}
\end{subfigure}
\caption{A cut-system (blue), embedded essential sphere (red), and boundary sphere (purple) for $M_{2,1}$, depicted using both graphical conventions.}
\end{figure}

\subsection*{Acknowledgments} The authors were supported by the National Science Foundation under Award No. DMS--2532818. We EB thank Ric Wade for an enlightening exchange about crossed homomorphisms and the possibility of a Chillingworth homomorphism for $IO_r$, as well as helpful comments on an earlier draft of the manuscript.

\subsection*{Tool Use} The authors used Hagaromo chalk, slate, pencil, paper,
and a Hewlitt Packard printer in the inital stages of the investigation. The
manuscript was typeset using Overleaf's LaTeX platform. Figures were prepared
with Adobe Illustrator and Nathan Dunfield's labelpin. No other sophisticated
tools (including LLMs) were used.

\section{The chromatic number of homologous spheres}\label{existence section} 

We begin by investigating the chromatic number of the graph of homologous spheres, determining it exactly.

\begin{proposition}\label{existence proposition} Suppose $r\ge 2$ and $v$ is an embedded sphere in $M_r$ such that $[v]\in H_2(M_r)$ is primitive. Then
\[ \chi(\Sph_v(M_r)) = r-1. \]
\end{proposition}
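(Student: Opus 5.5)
The plan is to prove the lower bound $\chi \ge r-1$ with an explicit clique, and the upper bound $\chi\le r-1$ with an explicit coloring by $\mathbb{Z}/(r-1)$. Both rest on the same topological fact about regions cobounded by homologous spheres.

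\textbf{Key fact.} Let $w_1,\dots,w_k$ be pairwise disjoint, pairwise non-isotopic spheres, all in the class $[v]$. Cutting $M_r$ along all of them gives exactly $k$ components. Each component has two boundary spheres, $w_i^+$ and $w_{i+1}^-$, after a cyclic relabeling; the cyclic order exists because the spheres are homologous and nonseparating. Each component is a compact submanifold of a connect sum of copies of $S^1\times S^2$ with two sphere boundary components. By Kneser--Milnor uniqueness of prime decomposition it is therefore diffeomorphic to some $M_{s_i,2}$.

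Regluing the components recovers $M_r$. Ranks of $H_1$, or equivalently Euler characteristic, add under gluing along spheres, and the cycle of gluings contributes one extra $S^1\times S^2$ summand. Hence
\[ r = 1+\sum_{i=1}^k s_i. \]
If some $s_i=0$, that region is $S^2\times I$, and then $w_i$ and $w_{i+1}$ are isotopic. So every $s_i\ge 1$, which gives $k\le r-1$.

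\textbf{Lower bound.} Choose the spheres so that every region is an $M_{1,2}$. Concretely, take the standard cut system of $M_r$ and use the spheres separating off one $S^1\times S^2$ summand at a time along a cycle through the handle dual to $[v]$. This gives $r-1$ pairwise disjoint, non-isotopic spheres in the class $[v]$. They form a clique of size $r-1$, so $\chi\ge r-1$.

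\textbf{Upper bound.} Work in the infinite cyclic cover $p\colon \widetilde M\to M_r$ determined by the class Poincaré dual to $[v]$. Let $t$ be the deck generator.
\begin{itemize}
\item Every sphere $w$ in the class $[v]$ lifts to $\widetilde M$, and each lift separates $\widetilde M$ into a top and a bottom piece.
\item For two disjoint lifts $\tilde a$ below $\tilde b$, the region between them is compact with two sphere boundaries, hence an $M_{s,2}$. Write $\rho(\tilde a,\tilde b)=s$.
\item The quantity $\rho$ is additive for stacked lifts $\tilde a<\tilde b<\tilde c$.
\item By the key fact applied to $w$ alone, $\rho(\tilde w, t\tilde w)=r-1$.
\end{itemize}
Fix a base sphere $v_0$ and a lift $\tilde v_0$. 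For any sphere $w$ in the class, lifts are compact, so $t^n\tilde w$ is disjoint from and above $\tilde v_0$ for all large $n$. Define
\[ c(w)=\rho(\tilde v_0,t^n\tilde w)\bmod (r-1). \]
By additivity, changing $n$ or the choice of lift changes $\rho$ by a multiple of $r-1$. Isotopy invariance is clear. So $c$ is a well-defined map to $\mathbb{Z}/(r-1)$.

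Now suppose $w$ and $w'$ are adjacent, i.e. disjoint and non-isotopic. Choose lifts with $\tilde w<\tilde w'<t\tilde w$. Applying the key fact to the pair $\{w,w'\}$ gives
\[ 1\le \rho(\tilde w,\tilde w')\le r-2. \]
By additivity, $c(w')-c(w)\equiv\rho(\tilde w,\tilde w')\not\equiv 0 \pmod{r-1}$. So $c$ is a proper coloring. The clique above realizes all $r-1$ values, so $c$ is surjective, and $\chi\le r-1$.

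\textbf{Main obstacle.} The delicate step is making the cover argument rigorous. One must check that each lift separates $\widetilde M$. One must also check that the compact region between two disjoint lifts really is an $M_{s,2}$: it embeds in $\widetilde M$, an infinite connect sum of $S^1\times S^2$'s, and has no fake-ball summands, so prime decomposition applies. Finally, one must verify the additivity of $\rho$.

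The fallback is to phrase $\rho$ purely homologically as $\operatorname{rank} H_1$ of the region, which avoids classifying the region's diffeomorphism type. Additivity then follows from Mayer--Vietoris along a simply connected gluing sphere. For the bound $\rho\ge1$, use Laudenbach's result that homotopic spheres cobounding an $H_1$-trivial region are isotopic.
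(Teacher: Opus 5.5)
Your proposal is correct and follows essentially the same route as the paper. The paper also uses the clique obtained by chaining $M_{1,2}$ blocks, colors by the rank mod $r-1$ of the region cobounded with lifts in the infinite cyclic cover, and gets properness from the two complementary regions of a disjoint pair having positive ranks summing to $r-1$. The one real difference is well-definedness: the paper glues two immersions into a finite cover $M_{s+t+1}\to M_r$ and uses the rank of finite-index subgroups, whereas you stay in the single cover and use that the fundamental domain between $\tilde w$ and $t\tilde w$ has rank $r-1$, together with additivity. You also check surjectivity of the coloring, which the paper leaves implicit.
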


The proof is divided into two lemmas: \cref{lower bound} computes a lower bound and \cref{upper bound} exhibits a coloring realizing this bound.

\begin{lemma}\label{lower bound}
Assume $r \ge 2$ and $v$ is an embedded sphere in $M_r$ such that $[v]\in H_2(M_r)$ is primitive.  Then
\[ \chi(\Sph_v(M_r)) \ge r-1. \]
\end{lemma}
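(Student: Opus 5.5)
The plan is to exhibit a clique of size $r-1$ in $\Sph_v(M_r)$, that is, $r-1$ pairwise disjoint, pairwise non-isotopic essential spheres all homologous to $v$. Since every proper coloring must assign distinct colors to the vertices of a clique, this gives $\chi(\Sph_v(M_r))\ge r-1$ at once.

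First we normalize. The mapping class group of $M_r$ surjects onto $\Out(F_r)$, and hence onto $\mathrm{GL}_r(\mathbb{Z})$ acting on $H_2(M_r)\cong\mathbb{Z}^r$; the latter acts transitively on primitive vectors. So after applying a diffeomorphism, which induces an isomorphism of sphere graphs carrying $\Sph_v$ to $\Sph_{v'}$, we may take $v$ to be a standard nonseparating sphere, say $v=a_1$ in a cut-system $C=\{a_1,\dots,a_r\}$. The clique size is invariant under this isomorphism, so it suffices to treat this $v$.

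Next we construct the clique. Cutting $M_r$ along $a_1$ gives $M_{r-1,2}$, with boundary spheres $a_1^+$ and $a_1^-$. Choose pairwise disjoint spheres $s_1=a_1, s_2,\dots,s_{r-1}$, where $s_j$ separates $M_{r-1,2}$ into a piece containing $a_1^+$ together with $j-1$ of the handles and a piece containing $a_1^-$ with the rest. Concretely, in the cut-sphere diagram, $s_j$ encloses $a_1^+$ and the pairs $a_k^\pm$ for $2\le k\le j$. These spheres are nested, so they are pairwise disjoint. Each $s_j$ cobounds with $a_1^+$ a region in $M_r$ (a copy of $M_{j-1,2}$), so $s_j$ is homologous to $a_1$ in $M_r$. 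Each $s_j$ is also nonseparating in $M_r$, since the class $[a_1]$ is nonzero, and hence essential. One could additionally use $j=r$, enclosing all handles, but that sphere cobounds a product region with $a_1^-$ and is isotopic to $a_1$. This is why the count is exactly $r-1$.

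The main obstacle is showing that the $s_j$ are pairwise non-isotopic. The plan is to distinguish them by the complementary pieces of $M_r$ cut along $s_i\cup s_j$. For $i<j$, the spheres $s_i$ and $s_j$ cobound a region diffeomorphic to $M_{j-i,2}$, and the complement is $M_{r-1-(j-i),2}$. If the two spheres were isotopic, then by the standard fact (Laudenbach) that disjoint isotopic spheres cobound a product $S^2\times I$, one of these regions would have to be $M_{0,2}$. That would force $j-i=0$ or $j-i=r-1$, and both are excluded since $1\le j-i\le r-2$. Alternatively, we can compare the isomorphism types of the pieces of $M_{r}$ cut along each $s_j$ relative to the fixed sphere $a_1$, which again distinguishes them by $j$. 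This yields an $(r-1)$-clique and proves the lemma.
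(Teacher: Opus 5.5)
Your proposal is correct and follows the paper's proof: your nested spheres $s_1=a_1,s_2,\dots,s_{r-1}$ are exactly the paper's $v,S_1,\dots,S_{r-2}$ from the decomposition of $M_{r-1,2}$ into $r-1$ copies of $M_{1,2}$, and you check homology and essentiality the same way. The normalization step is harmless but unnecessary, since cutting along $v$ directly gives $M_{r-1,2}$. Your non-isotopy argument supplies a justification the paper only asserts: the complementary pieces are $M_{j-i,2}$ and $M_{r-1-(j-i),2}$, and disjoint isotopic essential spheres must cobound $S^2\times I$.
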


\begin{proof}

We exhibit a clique of size $r-1$ in $\Sph_v(M_r)$. Since $\chi$ is greater than or equal to the size of the largest clique for every graph, this establishes the bound. 

Since $[v]\neq 0$, the sphere $v$ is nonseparating. Cutting $M_r$ along $v$ produces a manifold
$
N\cong M_{r-1,2}.
$
Denote the two boundary components of $N$ by $v^-$ and $v^+$. The identification of the boundary gives a quotient $N \to M_r$ that is a diffeomorphism on the interior of $N$.

Choose a decomposition
\[
N=A_1\cup A_2\cup\cdots\cup A_{r-1},
\]
where each $A_i\cong M_{1,2}$, the intersection of consecutive $A_i$ is an embedded sphere,  $A_1\cap \partial N = v^-$  and $A_{r-1}\cap\partial N = v^+$.
For $1\leq i\leq r-2$, let
$
S_i=A_i\cap A_{i+1}.
$
This decomposition is illustrated in Figure~\ref{fig:block-decomposition}.

We identify each $S_i$ with its image in the identification $N \to M_r$. In $M_r$ the spheres
$
v,S_1,\ldots,S_{r-2}
$
are pairwise disjoint and non-isotopic.

\begin{figure}
    \centering
\begin{tikzoverlay}[scale=1.0]{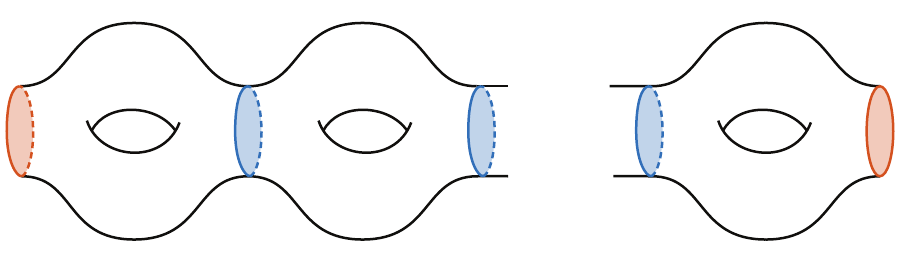}[font=\small]
    \node[figred] at (2.8,7.2) {$S_0^-$};
    \node[] at (15.2,1.0) {$A_1$};
    \node[figblue] at (28.2,7.2) {$S_1$};
    \node[] at (40.5,1) {$A_2$};
    \node[figblue] at (53.8,7.2) {$S_2$};
    \node[font=\large] at (63.4,14.6) {$\cdots$};
    \node[figblue] at (72.5,7.2) {$S_{r-2}$};
    \node[] at (84.9,1) {$A_{r-1}$};
    \node[figred] at (98.7,7.2) {$S_0^+$};
\end{tikzoverlay}
    \caption{A handlebody diagram of the decomposition of $M_{r-1,2}$ into $r-1$ copies of
    $M_{1,2}$, denoted $A_i$. Identifying $S_0^-$ with $S_0^+$ recovers $M_r$.}
    \label{fig:block-decomposition}
\end{figure}

Orient these spheres so that 
$
\partial A_i=S_i-S_{i-1}
$
for $1\leq i\leq r-1$, where we adopt the convention $S_{r-1}=S_0=v$. Since $\partial A_i$ is null-homologous in $M_r$, we have
$
[S_i]=[S_{i-1}]
$
for every $i$. It follows that
$
[S_i]==[v]
$
for all $0\leq i\leq r-2$. In particular, every $S_i$ is essential.

Since the $S_i$ are essential, pairwise disjoint, non-isotopic, and in the class $[v]$,
$
\{v,S_1,\ldots,S_{r-2}\}
$
determines a clique with $r-1$ vertices in $\Sph_v(M_r)$. Every proper coloring assigns distinct colors to the vertices of a clique, and therefore
\[
\chi(\Sph_v(M_r))\geq r-1.\qedhere
\]
\end{proof}

Constructing the coloring is more involved. Taking a cue from Gaster, Greene, and Vlamis we investigate immersions between homologous spheres. Suppose $a, b$ are two oriented spheres in $M_r$, define \emph{an immersion from $a$ to $b$ (of rank $s$)} to be an immersion $\iota \colon M_{s, 2}\looparrowright M_r$ such that restricted to the oriented boundary $\iota(\partial M_{s,2}) = -a \cup b$, where the minus sign denotes opposite orientation. The rank of an immersion between spheres, mod $r-1$, will serve as our color set.

\begin{lemma}\label{cobounding-lifts} 
    Let $a,b\subset M_r$ be embedded spheres with $[a]\in H_2(M_r)$ primitive. Let $p_a:M_\infty\to M_r$ be the infinite cyclic cover corresponding to algebraic intersection with $a$. Suppose $\widetilde{a}$ and $\widetilde{b}$ are disjoint lifts of $a$ and $b$. If $a$ and $b$ are homologous, then $\widetilde{a}$ and $\widetilde{b}$ cobound a compact submanifold of $M_\infty$. 
\end{lemma}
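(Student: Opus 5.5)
We argue through the structure of the infinite cyclic cover. The cover $p_a\colon M_\infty\to M_r$ is built by cutting $M_r$ along $a$ to get $N=M_r\setminus N(a)\cong M_{r-1,2}$ with boundary $a^-\cup a^+$. We then glue $\mathbb{Z}$ copies $N_k$ end to end, attaching the copy of $a^+$ in $N_k$ to the copy of $a^-$ in $N_{k+1}$. Let $\widetilde a_k$ denote the lift of $a$ between $N_{k-1}$ and $N_k$. Each $\widetilde a_k$ separates $M_\infty$ into a ``left'' and a ``right'' half.

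The plan has three steps.

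\textbf{Step 1: every lift of $b$ separates $M_\infty$ into two pieces, one on each end.} The map $p_a^*$ is given by algebraic intersection with $a$, so it evaluates to $[a]\cdot\gamma$ on a loop $\gamma$. Since $[b]=[a]$, the algebraic intersection number of $b$ with any closed loop equals that of $a$, and in particular it vanishes on loops whose intersection with $a$ is zero. Hence $b$ lifts to embedded spheres, and the deck generator $t$ permutes the lifts freely. Every embedded sphere in $M_\infty$ separates, because $H_1(M_\infty)$ is torsion-free and a nonseparating sphere would have a dual loop of intersection number $\pm1$, impossible in a simply connected...

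A cleaner argument for separation is this. $M_\infty$ is a union of compact pieces $N_k\cong M_{r-1,2}$ glued along spheres, so $H_2$ of any compact subset is generated by spheres. To show $\widetilde b$ separates, take a loop $\widetilde\gamma$ crossing $\widetilde b$ once. Its projection $\gamma$ meets $b$ algebraically in the total count over all lifts of $b$ that $\widetilde\gamma$ crosses. Since $\widetilde\gamma$ is closed, it crosses $\widetilde a_k$ algebraically zero times for every $k$. I would rather argue directly: pick a path in $M_\infty$ from far left to far right, project it, and compare counts. I expect this counting to show that $\widetilde b$ separates the two ends of $M_\infty$ with algebraic count $1$, exactly as $\widetilde a_k$ does. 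In other words, $\widetilde b-\widetilde a_k$ has zero algebraic intersection with every proper line and every closed loop. This is the key homological input, and it uses $[b]=[a]$ together with primitivity, which guarantees that the cover is connected and that $a$ itself separates the ends.

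\textbf{Step 2: disjoint lifts cobound a compact region.} The lift $\widetilde b$ is compact, so it lies in $N_{-K}\cup\dots\cup N_K$ for some $K$. Each of $\widetilde a$ and $\widetilde b$ separates $M_\infty$ into a side containing the $-\infty$ end and a side containing the $+\infty$ end. Since they are disjoint, one of them lies entirely on a definite side of the other; say $\widetilde b$ lies in $R(\widetilde a)$, the region to the right of $\widetilde a$. Let $W=R(\widetilde a)\cap L(\widetilde b)$. Then $W$ is a codimension-zero submanifold with boundary $\widetilde a\sqcup\widetilde b$, bounded on the left by $\widetilde a$. It is compact because it misses the $+\infty$ end: $L(\widetilde b)$ is contained in the left side of $\widetilde a_{K+1}$, and intersecting with $R(\widetilde a)$ cuts off the left end as well.

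\textbf{Main obstacle.} The main difficulty is Step 1, namely showing that the lift $\widetilde b$ separates the ends rather than bounding a compact region on one side. Ruling out the latter is what needs $[b]\neq0$. To show this, I would choose a proper line $\ell$ in $M_\infty$ that is a lift of a loop $\gamma$ in $M_r$ meeting $a$ exactly once. Then
\[
\sum_{k}\,\widetilde b\cdot t^k\ell \;=\; b\cdot\gamma \;=\; a\cdot\gamma \;=\; 1.
\]
Since $t^k$ preserves the ends, every translate $t^k\ell$ runs between the same pair of ends, so $\widetilde b\cdot t^k\ell$ is independent of $k$. Only finitely many of these terms can be nonzero, because $\widetilde b$ is compact and the translates $t^k\ell$ escape to infinity. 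A constant sequence with finitely many nonzero terms is identically zero, which contradicts the sum being $1$. This argument breaks down as stated, so I would instead use a single proper line $\ell$ together with the observation that $\widetilde b\cdot\ell\in\{0,1\}$ according to whether $\widetilde b$ separates the ends. I would then derive $\widetilde b\cdot\ell=1$ by projecting the compact segment of $\ell$ between $\widetilde a_{-K-1}$ and $\widetilde a_{K+1}$ and comparing its intersection counts with $a$ and with $b$ via the homology $[a]=[b]$ relative to endpoints. Getting this bookkeeping right is where I expect the real work to lie.
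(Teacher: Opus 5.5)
Your Step 2 is correct once Step 1 is known. The gap is Step 1 itself: you never prove that a lift $\widetilde b$ separates $M_\infty$ at all, and that is where all the content of the lemma lies.

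Your first attempt is false. $\pi_1(M_\infty)$ is an infinitely generated free group, and the lift to $N_0$ of a nonseparating sphere of $M_r\setminus N(a)$ is a nonseparating sphere in $M_\infty$.

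The counting attempts cannot be completed by better bookkeeping. The only input they use is $[b]=[a]$ in $H_2(M_r)$. That determines $[\widetilde b]$ only modulo $\ker\bigl(H_2(M_\infty)\to H_2(M_r)\bigr)=(t-1)H_2(M_\infty)\neq 0$. Concretely, let $\widetilde c\subset N_0$ lift such a nonseparating sphere $c$. Tubing $\widetilde a$ to $t\widetilde c$ and to $-\widetilde c$ gives a compact embedded sphere in $M_\infty$ whose projection is homologous to $a$. It meets a loop dual to $\widetilde c$ once, so it does not separate; its projection is not embedded. So an argument must use that $b$ is \emph{embedded} in $M_r$, i.e.\ that its lifts are pairwise disjoint, and your sketch never does.

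Your displayed identity is also misstated. The line $\ell=p_a^{-1}(\gamma)$ is $t$-invariant, so you should not sum over translates; the correct statement is simply $\widetilde b\cdot\ell=b\cdot\gamma=a\cdot\gamma=1$. That tells you the two ends lie on opposite sides of $\widetilde b$ \emph{provided} $\widetilde b$ separates. It says nothing about whether it separates: the tubed sphere above also has intersection $1$ with $\ell$.

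The paper supplies the missing input by contraposition. Assuming $\widetilde a$ and $\widetilde b$ do not cobound a compact region, it builds a closed loop $\widetilde\gamma$ in the cover that crosses $\widetilde b$ once and misses $\widetilde a$ and every other lift of $b$. Then $\gamma=p_a(\widetilde\gamma)$ has $a\cdot\gamma=0$ but $b\cdot\gamma=\pm1$, contradicting $[a]=[b]$.

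A shorter repair fits your outline directly. Since $b$ is embedded, nonseparating (as $[b]\neq 0$), and Poincar\'e dual to the same class as $a$, the same cover $M_\infty$ is obtained by stacking $\mathbb{Z}$ copies of $M_r\setminus N(b)$. In that model the components of $p_a^{-1}(b)$ are exactly the gluing spheres, each separating the two ends. That is your Step 1, and your Step 2 then finishes the proof. Unlike the paper's loop construction, this does not need $b\cap a=\emptyset$.
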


\begin{proof}
    Abstractly, the cover $p_a$ is the cover associated to the kernel of the cohomology class Poincar\'e dual to $[a]$. It can be realized concretely by cutting $M_r$ open along $a$ and stacking $\mathbb{Z}$ copies of $M_r \setminus N(a)$. Fix a lift $\widetilde{a}$ to $M_\infty$. This lift is one of the separating spheres between two consecutive copies of $M_r \setminus N(a)$, so $\widetilde{a}$ separates $M_\infty$ into two components, $M^\pm_\infty$.
    
    We now prove the contrapositive of the statement. Suppose $\widetilde{a}$ and $\widetilde{b}$ do not cobound a compact submanifold of $M_\infty$. We claim there is a closed curve $\widetilde{\gamma}\in X$ which is disjoint from $\widetilde{a}$, intersects $\tilde{b}$ once algebraically, and does not intersect any other lift of $\tilde{b}$. 
    Since $\widetilde{a}$ and $\widetilde{b}$ do not cobound a compact submanifold of $M_\infty$, both boundary components corresponding to $\widetilde{b}$ in $M_\infty \setminus N(\widetilde{a}\cup\widetilde{b})$ must be in the same connected component, $X_0$. Further, since $M_\infty$ is a regular cover and $b$ is disjoint from $a$, the lift $\widetilde{b}$ is contained in a lift $X\subset X_0$ of $M_r\setminus N(a)$, which is connected. Take a neighborhood around the lift $\widetilde{b}$ and points $x^+, x^- \in N(\widetilde{b})$ on opposite sides of $\widetilde{b}$ contained in $X$. There is path in $X$ from $x^+$ to $x^-$, joining $x^+$ to $x^-$ via an arc in $N(\widetilde{b})$ crossing $\widetilde{b}$ once gives the desired curve $\widetilde{\gamma} \subset X$. Indeed, every other lift of $b$ differs by a deck transformation, and so is disjoint from $X$.
    
    Project $\widetilde{\gamma}$ to a closed curve $\gamma=p_a(\widetilde{\gamma})\subset M_r$. Since $\widetilde{\gamma}$ is closed in the infinite cyclic cover, $\gamma$ has algebraic intersection number $\iota(\gamma, a)=0$. On the other hand, $\widetilde{\gamma}$ intersects exactly one lift of $b$ once, so its projection satisfies $\iota(\gamma,b)=\pm 1$. Since $a$ and $b$ have different algebraic intersection numbers with the closed curve $\gamma$, $[a]\ne [b] \in H_2(M_r;\mathbb Z)$. This proves the contrapositive.
\end{proof}

    \begin{figure}
    \centering
    \begin{tikzoverlay}[scale=1.0]{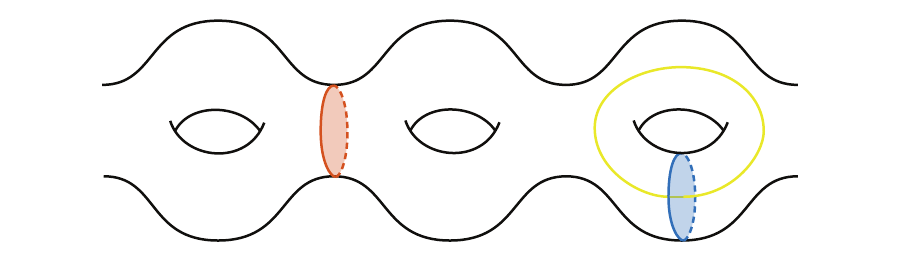}[font=\small]
    \node[figred] at (37.4,8.1) {$\tilde{a}$};
    \node[font=\large] at (8.5,14.5) {$\cdots$};
    \node[font=\large] at (92.5,14.5) {$\cdots$};
    \node[figblue] at (75.9,0.8) {$\tilde{b}$};
    \node[figyellow] at (85.9,13.1) {$\gamma$};
    \draw[very thick,decorate,decoration={brace,amplitude=8pt}] (37.4,27.4) -- (88.5,27.4) node[midway,black,yshift=16pt] {$X$};
\end{tikzoverlay}
    \caption{The handlebody diagram of $M_\infty$ indicating the image of the component $X$ under the natural gluing. The gluing of $X$ contains
    $\widetilde{b}$ and the curve $\widetilde{\gamma}$ crossing
    $\widetilde{b}$ once.}
    \label{fig:cobounding-lifts}
\end{figure}

\begin{corollary}\label{immersions exist}
If $a$ and $b$ are embedded spheres in $M_r$ representing the same primitive class $[v] \in H_2(M_r;\mathbb Z)$, then there exists an immersion from $a$ to $b$.
\end{corollary}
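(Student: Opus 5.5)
We prove the corollary by applying \cref{cobounding-lifts} in the infinite cyclic cover $p_a\colon M_\infty\to M_r$ associated to $a$. We then project the resulting compact cobounding region down to $M_r$, where the covering map restricted to it will be the desired immersion.

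\textbf{Step 1: disjoint lifts.} First choose disjoint lifts $\widetilde a$ and $\widetilde b$. The preimage $p_a^{-1}(b)$ is a disjoint union of spheres, and $b$ is simply connected, so every component of the preimage maps diffeomorphically to $b$. Since $b$ is compact it meets only finitely many of the $\mathbb{Z}$ lifts of $a$. Applying a deck transformation, we may therefore pick a lift $\widetilde b$ lying entirely on one side of a chosen lift $\widetilde a$, away from it. If $a$ and $b$ are isotopic, or $a=b$, we instead take $\widetilde b$ to be a translate of a pushoff; this case should be harmless. Now \cref{cobounding-lifts} gives a compact submanifold $W\subset M_\infty$ with $\partial W=\widetilde a\sqcup\widetilde b$. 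After possibly swapping the roles of the two lifts (using a translate on the other side), the orientations conventions give $\partial W=-\widetilde a\cup\widetilde b$.

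\textbf{Step 2: identify $W$.} Next we show $W\cong M_{s,2}$ for some $s$. The manifold $W$ is a compact connected submanifold of $M_\infty$, which is a connected sum of infinitely many $S^1\times S^2$ summands, and its boundary consists of two spheres. Cap off both boundary spheres with balls to obtain a closed $3$--manifold $\widehat W$. We must show that $\widehat W$ is a connected sum of copies of $S^1\times S^2$. Its fundamental group is free, since it is a subgroup of, or a free factor of, $\pi_1(M_\infty)$: the sphere boundary components are $\pi_1$-injective, and by van Kampen $\pi_1(M_\infty)$ decomposes along them. By Kneser--Milnor prime decomposition together with the Poincaré conjecture (or Perelman), a closed orientable $3$--manifold with free fundamental group is $\sharp_s S^1\times S^2$. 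Hence $W\cong M_{s,2}$. Alternatively, and more elementarily, one can use that every prime summand of a manifold embedded in $M_\infty$ with sphere boundary is a summand of $M_\infty$ itself. This is the step I expect to be the main obstacle to state cleanly. I would try first to cite the standard fact that the complement of a sphere system in $M_r$ (here $M_\infty$, exhausted by compact pieces $M_{k,2}$ cut along lifts of $a$) consists of pieces $M_{s,n}$.

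\textbf{Step 3: project.} Finally, set $\iota=p_a|_W\colon W\to M_r$. This is a local diffeomorphism, hence an immersion $M_{s,2}\looparrowright M_r$. Its restriction to the oriented boundary is $p_a(-\widetilde a\cup\widetilde b)=-a\cup b$, because $p_a$ is orientation preserving and maps each lift diffeomorphically onto its image. This is exactly an immersion from $a$ to $b$ of rank $s$.
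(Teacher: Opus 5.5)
Your proposal is correct and follows the paper's proof: place a lift $\widetilde b$ on the positive side of $\widetilde a$, apply \cref{cobounding-lifts} to get a compact cobounding region, identify it with $M_{s,2}$, and restrict the covering map. Your Step 2 (free $\pi_1$ plus prime decomposition, or the sphere-system fact) supplies a justification for $W\cong M_{s,2}$ that the paper only asserts, and your separate treatment of $a=b$ is unnecessary, since the translate $t\widetilde a$ is already a lift of $b$ disjoint from $\widetilde a$.
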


\begin{proof}

    Since $[b] = [a] = [v]$, \cref{cobounding-lifts} implies that we can choose a lift of $b$, $\widetilde b\subset M_\infty^+$ cobounding a compact sub-manifold $T\subset M_\infty^+$ with $\widetilde a$.

    We orient $\widetilde b$ such that $\partial T = -\widetilde a \cup \widetilde b$. Since $T$ is a compact sub-manifold with 2 boundaries, $T \cong M_{h,2}$, with $h \ge 0$.
    The restriction $p_v|_T: T \to M_r$
    maps one oriented boundary sphere to $-a$ and the other to $b$. Therefore $p_v|_T$ is an immersion from $a$ to $b$ of rank $h$.
\end{proof}

\begin{lemma}\label{well defined rank between}
Suppose $r \ge 2$ and $a,b$ are embedded spheres in $M_r$ representing the same primitive class $[v] \in H_2(M_r;\mathbb Z)$. Any two immersions between $a$ and $b$ have the same rank modulo $r-1$, and this value depends only on the isotopy classes of the spheres.
\end{lemma}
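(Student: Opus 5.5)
Given two immersions $\iota_1\colon M_{s,2}\looparrowright M_r$ and $\iota_2\colon M_{t,2}\looparrowright M_r$ from $a$ to $b$, I will show $s\equiv t \pmod{r-1}$ by lifting both to the infinite cyclic cover $p_a\colon M_\infty\to M_r$ of \cref{cobounding-lifts} and comparing inside the cover. First I need the lifts. The image of $\pi_1(M_{s,2})$ has trivial algebraic intersection with $a$: every closed curve in $M_{s,2}$ meets $-\partial_- \cup \partial_+$ algebraically zero times. Pushed forward to $M_r$, its intersection number with $[v]$ is therefore zero. (I expect to use that $\iota$ is a local diffeomorphism and that homological intersection with $[v]=[a]$ is computed by any representative.) Hence $\iota_1$ lifts to $\widetilde\iota_1\colon M_{s,2}\to M_\infty$, with boundary $-\widetilde a\cup \widetilde b_1$ for a lift $\widetilde b_1$ of $b$. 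Similarly $\iota_2$ lifts with boundary $-\widetilde a\cup\widetilde b_2$, and after a deck transformation both lifts start at the same $\widetilde a$.

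Next I compute ranks in the cover by Euler characteristic or homology rather than by embeddedness. I will use the additive invariant $\rho(T)=\tfrac12\,\mathrm{rank}\,H_1$-type quantity, or better $\chi(T)$ of the closed-off manifold. For a compact region $R\subset M_\infty$ cobounded by two lifts of spheres in class $[v]$, the rank of $R$ counts the $S^1\times S^2$ summands. Concretely, the region between $\widetilde a$ and $\tau^k\widetilde a$, where $\tau$ is the generating deck transformation, is $k$ copies of $M_r\setminus N(a)\cong M_{r-1,2}$ glued along spheres, so it has rank $k(r-1)$. The immersed image of $M_{s,2}$ is a degree-one cobounding chain between $\widetilde a$ and $\widetilde b_1$ in $H_3(M_\infty,\partial)$. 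I plan to define the rank homologically: $s=\mathrm{rank}\,H_1$ of the cobounded piece, equivalently $\dim$ of the image of $H_1$ with rational coefficients appropriately counted. For this to be well defined for immersed pieces, I will argue that the lifted immersion $\widetilde\iota_1$ has degree one onto the region $R_1$ between $\widetilde a$ and $\widetilde b_1$ and is in fact an embedding. Indeed, a proper degree-one map between compact manifolds with sphere boundary that is a local diffeomorphism is a covering of degree one, hence a diffeomorphism. So $s=\mathrm{rank}(R_1)$, and likewise $t=\mathrm{rank}(R_2)$.

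Finally, $\widetilde b_1$ and $\widetilde b_2$ are both lifts of $b$, so $\widetilde b_2=\tau^k\widetilde b_1$. Comparing the region from $\widetilde a$ to $\widetilde b_1$ and its translate, the rank differs by exactly the rank of a fundamental domain of the relevant translation. Using additivity of rank under gluing along spheres, with $\widetilde a,\tau^k\widetilde a$ as intermediate reference, I get $t-s=k(r-1)$. Isotopy invariance follows because an isotopy of $a$ or $b$ extends to an ambient isotopy, and composing an immersion with it yields an immersion of the same rank between the isotoped spheres.

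The main obstacle is the middle step: showing the lift of an immersion is an embedding onto a region. Properness near the boundary and possible self-overlap of $\widetilde\iota$ with other lifts of $b$ in the interior need care. If that fails, I would instead define rank via the Euler characteristic of the doubled manifold, $\chi$ of the relative chain. I would then compute $t-s$ purely homologically from $\chi$, which is additive for degree-one chains regardless of embeddedness.
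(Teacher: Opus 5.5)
\textbf{The gap is the lifting step.} For a closed curve $\gamma$ in $M_{s,2}$, the algebraic intersection of $\iota\circ\gamma$ with $a$ equals the intersection of $\gamma$ with the whole preimage $\iota^{-1}(a)$, not with $\partial_-$. For an immersion (unlike an embedding), $\iota^{-1}(a)$ can contain further spheres in the interior of $M_{s,2}$. Such spheres can be nonseparating, so $\iota^*\mathrm{PD}[a]$ need not vanish.

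Here is a concrete example. Take $a=S_1$ in a cut system with dual basis $x_1,\dots,x_r$. Let $\hat M\to M_r$ be the double cover induced by $F_r\to\mathbb{Z}/2$ with $x_2\mapsto 1$ and $x_i\mapsto 0$ for $i\neq 2$. The sphere $a$ has two lifts $\hat a_1,\hat a_2$. The loop $x_2$ misses $a$ and maps to the generator, so the preimage of $M_r\setminus N(a)$ is connected. Hence some loop in $\hat M\setminus\hat a_1$ crosses $\hat a_2$ exactly once. Cutting $\hat M\cong M_{2r-2+1}$ along $\hat a_1$ gives an immersion $M_{2r-2,2}\looparrowright M_r$ from $a$ to $a$. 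Its image in $\pi_1$ contains a loop with intersection number $\pm1$ with $a$, so it has no lift to $M_\infty$, and your argument never reaches it. Its rank $2r-2$ is $\equiv 0 \pmod{r-1}$, so this is a gap in the method, not a counterexample to the lemma.

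Your fallback does not repair this. For any compact $3$--manifold $\chi(M)=\tfrac12\chi(\partial M)$, so $\chi(M_{s,2})=2$ and its double has $\chi=0$ for every $s$. The Euler characteristic of the $3$--manifold cannot see the rank at all.

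\textbf{The missing idea is the paper's closing-up trick,} which needs neither lifting nor embeddedness. Fix one immersion $M_{t,2}\looparrowright M_r$ from $b$ to $a$ (from \cref{immersions exist}). Glue it to an arbitrary immersion $M_{s,2}\looparrowright M_r$ from $a$ to $b$ along the boundary spheres. The orientation conventions make the result an immersion of the closed manifold $M_{s+t+1}$, hence a finite covering of some degree $d$. The Schreier index formula then gives $s+t=d(r-1)$, so every immersion from $a$ to $b$ has rank $\equiv -t$. The quantity $1-\operatorname{rank}\pi_1$, unlike $\chi$, is multiplicative, but only for genuine covers; that is why closing up is essential.

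\textbf{Two smaller points.}
\begin{enumerate}
\item For immersions that do lift, you still need to check that $\widetilde b_1\cap\widetilde a=\emptyset$ and $\widetilde\iota^{-1}(\widetilde a\cup\widetilde b_1)=\partial M_{s,2}$ before invoking your degree-one covering argument.
\item In the isotopy step, an ambient isotopy taking $b$ to $b'$ generally moves $a$ as well. Composing with it only shows invariance when both spheres move at once. You need an extra argument, for instance an isotopy in the cover supported away from a distant lift of $a$.
\end{enumerate}
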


\begin{proof}
Suppose $M_{s,2} \to M_r$ is an immersion from $a$ to $b$. By \cref{immersions exist} there is an immersion $M_{t,2}\to M_r$ from $b$ to $a$. Choose an orientation reversing diffeomorphism $\phi : \partial M_{s,2} \to \partial M_{t,2}$ that identifies the pre-images of $a$ and $b$ under the immersions. Glue the two manifolds using $\phi$ to obtain an immersion $M_{s+t+1} \to M_r$. As this is an immersion of closed 3--manifolds it is a finite-sheeted covering map. Therefore, the rank of the fundamental group satisfies $s+t \equiv 0 \mod r-1$. Thus any immersion from $a$ to $b$ has rank congruent to $-t \mod r-1$.

 It remains to show that this congruence depends only on the isotopy class of the spheres. With $a, b$ and the immersions as before, suppose $b'$ is isotopic to $b$ via an isotopy $b_s$. The boundary gluing gives a lift of $b$ to $M_{s+t+1}$, and the isotopy $b_s$ lifts to an isotopy from $b$ to a lift $\widetilde{b}'$ of $b'$ to $M_{s+t+1}$. By the isotopy extension principle, this lifted isotopy extends to an ambient isotopy of $M_\infty$ \cite{isotopyExtension}*{Section 8.1, Theorem 1.3}.  Hence the embedded copy of $M_{s,2}$ that cobounds $-\widetilde a$ and $b$ is isotopic to a sub-manifold co-bounded by $-\widetilde a$ and $\widetilde b'$. This gives an immersion from $a$ to $b'$ with the same rank as $a$ to $b$. A symmetric argument applies to any sphere isotopic to $a$. We conclude the rank of an immersion from $a$ to $b$ has a fixed value modulo $r-1$ and depends only on the isotopy class.
\end{proof}

We are now in a position to define the coloring. 

\begin{lemma}\label{upper bound}
Suppose $r\ge 2$ and $v$ is an embedded sphere in $M_r$ such that $[v]\in H_2(M_r)$ is primitive. There is a proper vertex coloring 
\[f \colon \Sph_v(M_r) \to \mathbb{Z}/(r-1)\mathbb{Z}\].
Consequently,
\[
\chi\bigl(\Sph_v(M_r)\bigr)\leq r-1.
\]
\end{lemma}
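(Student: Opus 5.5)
Fix the basepoint vertex $v\in\Sph_v(M_r)$. For every sphere $b$ representing $[v]$, \cref{immersions exist} provides an immersion from $v$ to $b$ of some rank $h$. By \cref{well defined rank between}, the residue $h \bmod r-1$ depends only on the isotopy class of $b$. So we will define
\[ f(b) = \text{rank of any immersion from } v \text{ to } b \pmod{r-1}, \]
which is a well-defined function on the vertex set of $\Sph_v(M_r)$.

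It remains to check surjectivity and properness. Surjectivity comes from the clique in the proof of \cref{lower bound}. There $A_1\cup\cdots\cup A_i\cong M_{i,2}$ embeds into $M_r$ with oriented boundary $-v\cup S_i$, so $f(S_i)=i$ for $0\le i\le r-2$ and every residue is attained.

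For properness, suppose $a$ and $b$ are disjoint, non-isotopic, and both in the class $[v]$; we must show $f(a)\neq f(b)$. The plan is to reduce this to the claim that there is an immersion from $a$ to $b$ whose rank is not divisible by $r-1$.

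\begin{itemize}
\item \emph{Additivity of ranks.} First show that $f(b)\equiv f(a)+\operatorname{rank}(a\to b)$. Take an immersion $M_{s,2}\looparrowright M_r$ from $v$ to $a$ and one $M_{t,2}\looparrowright M_r$ from $a$ to $b$. Gluing them along the preimages of $a$ gives an immersion $M_{s,2}\cup_{S^2}M_{t,2}\cong M_{s+t,2}$ from $v$ to $b$. Since a boundary connect sum along a sphere adds ranks, this has rank $s+t$, and \cref{well defined rank between} gives the congruence.
\item \emph{Choosing the immersion.} Work in the cover $p_a$ of \cref{cobounding-lifts}. Since $b$ is disjoint from $a$, each lift of $b$ lies in a single copy $X\cong M_{r-1,2}$ of $M_r\setminus N(a)$. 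Such a lift $\widetilde b$ separates $X$, since otherwise the curve argument of \cref{cobounding-lifts} contradicts $[a]=[b]$. It therefore splits $X$ as $T\cup T'$ with $T\cong M_{h,2}$ cobounded by $\widetilde a$ and $\widetilde b$, and $T'\cong M_{h',2}$ cobounded by $\widetilde b$ and the next lift of $a$. Counting ranks gives $h+h'=r-1$.
\item \emph{Ruling out the bad ranks.} If $h=0$, then $T\cong S^2\times I$ and $a$ is isotopic to $b$. If $h'=0$, then $b$ is isotopic to the translate of $a$, hence again to $a$ in $M_r$. Both are excluded, so $1\le h\le r-2$. The restriction $p_a|_T$ is an immersion from $a$ to $b$ of rank $h\not\equiv 0 \pmod{r-1}$, and additivity gives $f(a)\neq f(b)$.
\end{itemize}

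The step I expect to be the main obstacle is the identification $T\cong M_{h,2}$ with $h+h'=r-1$. This requires that a separating sphere in $M_{r-1,2}$ which separates the two boundary components splits it as a connect sum of pieces $M_{h,2}$ and $M_{h',2}$. I would justify it via Kneser–Milnor prime decomposition, or by the rank additivity of $\pi_1$ under free products together with the fact that a compact submanifold of $M_\infty$ bounded by spheres is some $M_{h,k}$. One also needs that $S^2\times I$ with the given boundary yields an isotopy, which is the standard fact that an essential sphere cobounding $S^2\times I$ with another sphere is isotopic to it. With properness and surjectivity established, $f$ is a proper coloring of size $r-1$, so $\chi(\Sph_v(M_r))\le r-1$.
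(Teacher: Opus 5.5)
Your proposal is correct and is essentially the paper's argument. Splitting the lift $X$ of $M_r\setminus N(a)$ along $\widetilde b$ into $T\cup T'$ is the paper's decomposition of $M_r\setminus N(D_1\cup D_2)$ into $W\cong M_{t,2}$ and $W'\cong M_{s,2}$ with $t+s=r-1$ and $t,s>0$, and your additivity step is the paper's stacking of $W$ onto an immersion from $v$ to $D_1$; your one addition is the explicit surjectivity check via the clique from \cref{lower bound}, which the paper leaves implicit (it also follows from $\chi(\Sph_v(M_r))\ge r-1$).
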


\begin{proof}
For a vertex $a \in \Sph_v(M_r)$, choose an oriented representative such that $[a] = [v]$ and an immersion $M_{h,2} \rightarrow M_r$
from $v$ to $a$. Define
\[
f(a) = h\mod{r-1}.
\]
By \cref{well defined rank between}, this value is independent of the choice of the representative $a$ and of the immersion $M_{h,2}\rightarrow M_r$.  Thus $f$ is well defined.

It remains to show that $f$ is a proper vertex coloring. Let $D_1$ and $D_2$ be
disjoint, nonisotopic oriented embedded spheres representing $[v]$.
Since $[D_1]-[D_2] = 0$, the complement
\[
M_r\setminus N(D_1\cup D_2)
\]
has two components; indeed, if the complement were connected one could construct a curve in $M_r$ with positive algebraic intersection with $[D_1]-[D_2]$. Denote the two components by $W$ and $W'$. Since each of $D_1$ and $D_2$ are non-separating, the boundary of $W$ exactly contains one of $D_1^{\pm}$ and $D_2^{\pm}$, and similarly for $W'$. Chose component labels so that 
$
\partial W=-D_1\cup D_2
$ and
$
\partial W'=D_1\cup -D_2.
$
and note that
$
W\cong M_{t,2}
$ and $
W'\cong M_{s,2}.
$
Since the gluing of $W$ and $W'$ along their boundary spheres is $M_r$, we deduce
$
r=t+s+1,
$
and hence
$
t+s=r-1.
$

Neither $t$ nor $s$ is zero. If either $t=0$ or $s=0$, then
$D_1$ and $D_2$ would be
be isotopic. Therefore
$
0<t,s<r-1.
$

Choose an immersion
$
i\colon M_{h,2}\to M_r
$
from $v$ to $D_1$. Since
$
\partial W=-D_1\cup D_2,
$
we may stack $W$ onto the terminal boundary of this immersion. More
precisely, attach $W$ to $M_{h,2}$ along the boundary sphere
$D_1^-$ and extend $i$ over $W$ using the inclusion
$W\hookrightarrow M_r$. This produces an immersion
$
M_{h+t,2}\to M_r
$
from $v$ to $D_2$. Since $0<t<r-1$, we have
\[
h+t\not\equiv h\pmod{r-1}.
\]
Therefore
\[
f(D_2)\neq f(D_1).
\]
Thus $f$ is a proper vertex coloring with values in
$\mathbb Z/(r-1)\mathbb Z$, and so
\[
\chi(\Sph_v(M_r))\leq r-1.\qedhere
\]
\end{proof}

\section{Sphere pushes and generating sets}\label{generators section}
Throughout the section, assume $r \ge 2$.

The mapping class group of $M$, $\Mod(M)$, acts on the sphere graph $\Sph(M)$ in the natural way.  A mapping class represented by a diffeomorphism $f$ sends the isotopy class $[S]$ of an embedded sphere to the isotopy class $[f(S)]$.

If $S\subset M$ is an embedded sphere, take a regular neighborhood $N(S)\cong S^2\times[0,1]$.  A sphere twist $T_S$ is supported in $N(S)$ and rotates the sphere $S^2\times\{t\}$ through one full turn as $t$ moves from $0$ to $1$, while acting as the identity outside $N(S)$.  Although this diffeomorphism may not be isotopic to the identity, it fixes the isotopy class of every embedded sphere \cite{BrendleBroaddusPutman}*{pp.~1--3}~\cite{Laudenbach}*{Theorem 1.3}. Therefore, sphere twists act trivially on $\Sph(M)$.

The connectivity arguments in \cref{uniqueness section} require descriptions of the generators of $\Mod(M)$ for particular $M_{r,k}$, as well as generators for the pre-image of the Torelli group $IO_r$. In this section we develop the geometric descriptions of the required generating sets. We also establish identities among some of these generators to produce a generating set suited to our purposes. The key lemma for these identities is \cref{twist transfer}, which allows us to lift identities from $\Aut(F_r)$ to a mapping class group, up to a sphere twist term. As sphere twists act trivially on the sphere graph, a relation up to sphere twist is a relation for the induced action on the sphere graph.

We use sphere bases, also known as cut-systems, to describe the geometric generators below. As stated in the introduction, a cut-system in $M_{r,k}$ is a disjoint union of embedded spheres 
$
\mathcal{S} = \{S_1,\dots,S_r\}
$
such that $M_{r,k}\setminus N(\mathcal{S}) = M_{0,k+2r}$ for a sufficiently small regular neighborhood of the system. 

To obtain a cut-system, choose a collection $\{S_1,\ldots, S_r\}$ of disjoint, embedded spheres in $M_{r,k}$ such that, after choosing orientations on each $S_i$, the homology classes 
\[
[S_1],\dots,[S_r]
\]
form a basis of $H_2(M_r;\mathbb{Z})$. Such a collection always exists~\cite{landgraf}*{Lemma B.2}. For a cut-system $\mathcal{S}$, let 
\[
Z = M_{r,k}\setminus N(\mathcal {S})
\]
Then $Z \cong M_{0, k+2r}$. Each sphere in the cut-system gives 2 boundary spheres $S_i^+$ and $S_i^-$ of $Z$.  We refer to $Z$ as the cut-open manifold, and it is the portion of $M_{r,k}$ shown in the cut-sphere diagrams. 

Now, fix a base point $*$ in $Z$. The \emph{geometric free basis} dual to $\mathcal{S}$ is a free basis 

\[
\pi_1(M_{r,k}, *) = F_r = \langle x_1,\dots, x_r\rangle
\]
where $x_i$ is the homotopy class of an oriented loop that, 
in the cut open manifold $Z$, is an oriented curve which goes from the basepoint to $S_i^-$ and then returns to the basepoint via $S_i^+$, disjoint from all other spheres of the cut-system.

Note, the geometric free basis can be realized as a fundamental group of a rose graph embedded in $M_{r,k}$ in the natural way, so in this sense every sphere basis is dual to a rose. Conversely, given a rose embedded in $M_{r,k}$, disjoint embedded representatives of the primitive homology classes Poincar\'{e} dual to each edge give a sphere basis dual to the embedded rose.

\subsection{Point pushes, sphere pushes, and sphere twists}

With the exception of inversions and sphere twists, the diffeomorphisms appearing in our generating sets are constructed by pushing spheres around loops.  For example, a boundary drag moves a boundary sphere around a loop passing through one handle and returns it to its starting position. This is analogous to a disk push in the surface setting, where a boundary circle is moved around a loop by an ambient isotopy. We describe this construction following Landgraf \cite{landgraf}*{Section~4}.

Suppose that $S$ is a boundary sphere of $M_{r,k}$. We cap $S$ with a ball $B$, giving a copy of $M_{r,k-1}$.  Capping allows us to move $B$ though the interior of the manifold by an ambient isotopy.  We push and rotate $B$ around a loop based at a point in its interior, keeping the remaining boundary components fixed, and return $B$ so that the final diffeomorphism fixes $B$ pointwise.  Removing the interior of $B$ then gives a diffeomorphism of $M_{r,k}$ fixing the boundary pointwise.  This is the boundary-pushing construction described by Landgraf \cite{landgraf}*{Section~4}.

Now, suppose that $S$ is an embedded sphere in the interior of $M_{r,n}$.  Cutting along $S$ gives two boundary copies, $S^+$ and $S^-$.  We choose one of these copies and apply the boundary-pushing construction to a path $\gamma$ based at the chosen copy while fixing all other boundary components of the cut open manifold pointwise to obtain a diffeomorphism $\varphi$ of the cut manifold.  Since $\varphi$ fixes the boundary of the cut manifold pointwise, it is compatible with the identification of $S^+$ and $S^-$. Thus $\varphi$ induces a diffeomorphism $\bar{\varphi}$ of $M_{r,k}$. We call $\bar{\varphi}$ the push of $S^\pm$ along $\gamma$. This construction is ambiguous up to a sphere twist in $S^\pm$~\cite{landgraf}*{Theorem 4.1}. As our objective is the action on the sphere graph this ambiguity will be a non-issue in our arguments.

\subsection{Generators for \texorpdfstring{$IO_r$}{IOr}}

Laudenbach's surjection $\Mod(M_r)\to\Out(F_r)$ allows us to represent the Magnus generating set of $IO_r$ as a set of diffeomorphisms of $M_r$ ~\cite{Laudenbach}*{Theorem 4.3}. The representative diffeomorphisms, combined with sphere twists, is a generating set for the pre-image of $IO_r$ in $\Mod(M_r)$. First, recall the Magnus generating set, described with respect to a choice of free basis.

\begin{theorem}[\cite{magnus}]\label{magnus generators}
Let 
\[
F_r = \langle x_1, \dots, x_r \rangle
\]
The group $\IO_r$ is generated by the outer classes of the automorphisms
\begin{align*}
    M_{ij}&:x_i \longmapsto x_jx_ix_j^{-1},\\
    M_{ijk}&: x_i \longmapsto x_i[x_j,x_k],
\end{align*}
where $i,j,k$ are pairwise distinct, $j<k$, and all unlisted basis elements are fixed. 
\end{theorem}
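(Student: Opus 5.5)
We would prove the statement at the level of $\Aut(F_r)$ and then pass to $\Out(F_r)$. Let $K\le \Aut(F_r)$ be the subgroup generated by the automorphisms $M_{ij}$ and $M_{ijk}$, and let $IA_r=\ker(\Aut(F_r)\to GL_r(\mathbb{Z}))$. Each $M_{ij}$ and $M_{ijk}$ acts trivially on $H_1(F_r)$, so $K\le IA_r$. The image of $IA_r$ in $\Out(F_r)$ is $IO_r$, since inner automorphisms act trivially on homology. Hence it suffices to show $K=IA_r$, and projecting then gives the claim for $IO_r$. We use the classical two-step strategy: first show $K$ is normal in $\Aut(F_r)$, then show that $\Aut(F_r)/K\to GL_r(\mathbb{Z})$ is injective by verifying that $\Aut(F_r)/K$ satisfies a presentation of $GL_r(\mathbb{Z})$.

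\emph{Step 1 (normality).} Fix Nielsen's generating set of $\Aut(F_r)$: the permutations of basis elements, the inversions $x_i\mapsto x_i^{-1}$, and the transvections $\rho_{ij}\colon x_i\mapsto x_ix_j$ and $\lambda_{ij}\colon x_i\mapsto x_jx_i$, all other basis elements fixed. For each such generator $g$ and each $M\in\{M_{ij}^{\pm1},M_{ijk}^{\pm1}\}$ we check that $gMg^{-1}\in K$.
\begin{itemize}
\item Permutations and inversions permute the Magnus generators up to inverses. For example, conjugating $M_{ijk}$ by $x_j\mapsto x_j^{-1}$ yields $x_i\mapsto x_i[x_j^{-1},x_k]$, and this is a product of $M_{ijk}^{-1}$ with a conjugate of it by $M_{ij}$-type generators.
\item Transvections require explicit computations, organized by how the indices of $\rho_{ab}$ overlap with $\{i,j,k\}$. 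When the indices are disjoint, $g$ and $M$ commute. In the overlapping cases we write $gMg^{-1}$ as an explicit word. A typical identity is
\[
\rho_{ji}\,M_{ij}\,\rho_{ji}^{-1}\colon x_i\mapsto x_jx_i x_j^{-1},\ \text{twisted by } x_i,
\]
which is a product of $M_{ij}$ and $M_{ijk}$-type terms. A commutator identity $[ab,c]=a[b,c]a^{-1}[a,c]$ handles the $M_{ijk}$ cases.
\end{itemize}
This case analysis is finite but long. We expect it to be the main computational obstacle, and we would first try to reduce the number of cases using the symmetry supplied by the permutation generators.

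\emph{Step 2 (quotient is $GL_r(\mathbb{Z})$).} Let $Q=\Aut(F_r)/K$. Since $K$ is normal, $Q$ is generated by the images of the Nielsen generators. Because $\lambda_{ij}=M_{ij}'\rho_{ij}$ modulo $K$ (left and right multiplication differ by a conjugation in $K$), $Q$ is generated by the images of the permutations, the inversions, and the $\rho_{ij}$. We then verify in $Q$ a known presentation of $GL_r(\mathbb{Z})$, namely the Steinberg-type presentation via elementary matrices (Magnus/Nielsen):
\begin{itemize}
\item $[\rho_{ij},\rho_{kl}]=1$ for $j\ne k$ and $i\ne l$;
\item $[\rho_{ij},\rho_{jk}]=\rho_{ik}^{\pm1}$;
\item $(\rho_{12}\rho_{21}^{-1}\rho_{12})^4=1$, together with the relations expressing signed permutations in terms of the $\rho_{ij}$;
\item the relations among permutations and inversions.
\end{itemize}
Each relation holds in $\Aut(F_r)$ up to an explicit element of $K$. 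For instance, $[\rho_{ij},\rho_{jk}]\rho_{ik}^{\mp1}$ is an $M_{ijk}$-type automorphism. This gives a surjection $GL_r(\mathbb{Z})\to Q$ whose composite with $Q\to GL_r(\mathbb{Z})$ is the identity. Hence $Q\to GL_r(\mathbb{Z})$ is an isomorphism, its kernel $IA_r/K$ is trivial, and $K=IA_r$.

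The delicate point is Step 2's dependence on a correct presentation of $GL_r(\mathbb{Z})$ (Nielsen's for $r=2$, and Magnus/Steinberg for $r\ge3$), together with checking that every defining relation lifts to $\Aut(F_r)$ modulo $K$. If this verification becomes unwieldy, we would fall back on the alternative route of Day--Putman. In that approach one shows, using Putman's lemma, that a suitable complex on which $IA_r$ acts is connected, and that stabilizers are generated by Magnus-type elements. Finally, we project $K=IA_r$ to $\Out(F_r)$ to obtain the statement.
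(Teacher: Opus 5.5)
\noindent The paper does not prove this statement. It imports it from Magnus and uses it only as a black box to obtain generators for $\mathcal{I}_r$. Your outline supplies an actual proof along classical lines. You show that the subgroup $K$ generated by the Magnus automorphisms is normal in $\Aut(F_r)$. You then show that the images of the Nielsen generators in $Q=\Aut(F_r)/K$ satisfy a presentation of $GL_r(\mathbb{Z})$. The resulting map $GL_r(\mathbb{Z})\to Q$ then inverts $Q\to GL_r(\mathbb{Z})$, so $K=\mathrm{IA}_r$. The logic is sound. This includes the passage to $\IO_r$, and the case $r=2$, where $K=\mathrm{Inn}(F_2)$ and Nielsen's presentation of $GL_2(\mathbb{Z})$ is what is needed. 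The citation buys the authors brevity. Your route buys self-containment modulo the Steinberg--Milnor presentation of $SL_r(\mathbb{Z})$ for $r\ge 3$, at the cost of a long verification in which essentially all the content sits.

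\noindent Three points in that verification need fixing; automorphisms are composed as functions throughout.

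First, inversions do not permute the Magnus generators. For example, $\iota_jM_{ijk}\iota_j=M_{ij}M_{ijk}^{-1}M_{ij}^{-1}$, which is a conjugate, not a product. Your sample transvection case is easy but misdescribed: $\rho_{ji}M_{ij}\rho_{ji}^{-1}$ also moves $x_j$, to $(x_jx_i)x_j(x_jx_i)^{-1}$, and it equals $M_{ij}M_{ji}$ with no three-index terms.

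Second, permutations, inversions and $\rho_{12}$ generate $\Aut(F_r)$ as a monoid, since $\rho_{12}^{-1}=\iota_2\rho_{12}\iota_2$. So it suffices to conjugate by these. Then every conjugate is a product of IA automorphisms moving a single basis element, which visibly lie in $K$, except $\rho_{12}M_{21k}\rho_{12}^{-1}$. That element sends $x_1\mapsto x_1x_2w^{-1}x_2^{-1}$ and $x_2\mapsto x_2w$, where $w=[x_1x_2,x_k]$. Both generators move, by words containing both letters. So expanding $w$ with $[ab,c]=a[b,c]a^{-1}[a,c]$ does not by itself exhibit this element as a word in Magnus generators. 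This is the computation your Step 1 must genuinely carry out.

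Third, in Step 2 the odd signed permutations have determinant $-1$, so they cannot be expressed through the $\rho_{ij}$. Use $GL_r(\mathbb{Z})=SL_r(\mathbb{Z})\rtimes\mathbb{Z}/2$ with a single inversion as an extra generator. Then check its conjugation relations with the $\rho_{ij}$ modulo $K$ as well.
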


Landgraf defines sphere pushes in $\Mod(M_r)$ that map to the Magnus generators of $IO_r$ in the natural surjection $\Mod(M_r)\to \Out(F_r)$~\cite{landgraf}*{Section~5 and Figures~9--10}. Fix a cut-system and corresponding geometric free basis. With respect to this free basis, the Magnus generator $M_{ij}$ is represented by the \emph{handle drag} $\HD_{ij}$. Informally this is the diffeomorphism that drags the $i$th handle through the $j$th. Precisely, let $\Sigma_i$ be the sphere in the cut-open manifold $Z$ separating $S_i^{\pm}$ from all other spheres of the cut-system. The sphere push of $\Sigma_i$ along a loop representing $x_j$ based on the boundary of $\Sigma_i$ is the \emph{handle drag} $\HD_{ij}$, $\Sigma_i$ and push the resulting boundary sphere around a loop representing $x_j$, shown in schematic in \cref{fig:handle-drag}.
The Magnus generator $M_{ijk}^{-1}$ is represented by the positive commutator drag $\CD^+_{ijk}$. The commutator drag $\CD^+_{ijk}$ is the sphere push of $S_i^+$ around a path representing the commutator $
[x_j,x_k] = x_jx_kx_j^{-1}x_k^{-1}$, see \cref{fig:positive-commutator-drag}.
We summarize this discussion as the following lemma, for ease of cross reference.

\begin{figure}
\begin{subfigure}{0.48\textwidth}
    \centering
\begin{tikzoverlay}[scale=1.0]{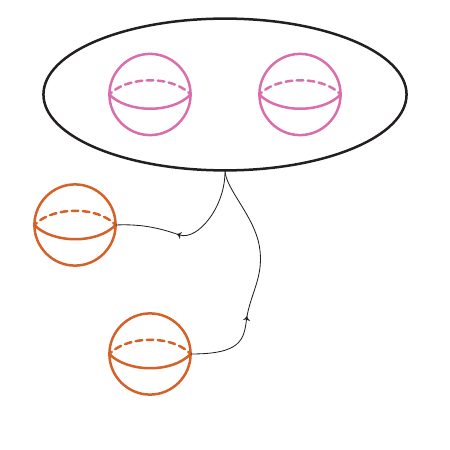}[font=\small]
    \node[figmagenta] at (33.5,73.2) {$S_i^+$};
    \node[figmagenta] at (67.5,73.2) {$S_i^-$};
    \node[figred] at (17.4,44.1) {$S_j^-$};
    \node[figred] at (33.8,15.5) {$S_j^+$};
    \node[] at (82,65.0) {$\Sigma_i$};
\end{tikzoverlay}
    \caption{The handle drag $\HD_{ij}$ is the push of $\Sigma_i$ around a loop representing $x_j$.}
    \label{fig:handle-drag}
    \end{subfigure}
    \hfill
    \begin{subfigure}{0.48\textwidth}
    \centering
    \begin{tikzoverlay}[scale=1.0]{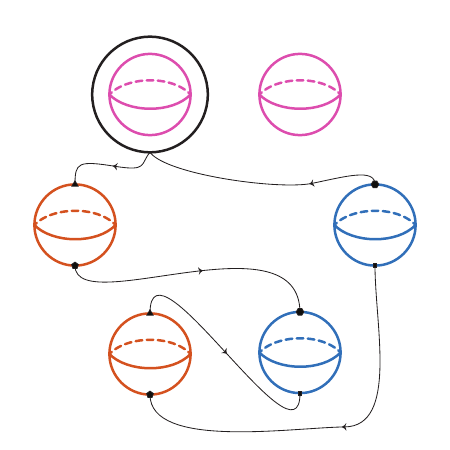}[font=\small]
    \node[figmagenta] at (33.1,73.1) {$S_i^+$};
    \node[figmagenta] at (66.9,73.4) {$S_i^-$};
    \node[figred] at (8.3,40) {$S_j^-$};
    \node[figred] at (25.4,11.1) {$S_j^+$};
    \node[figblue] at (74.6,12.2) {$S_k^-$};
    \node[figblue] at (92.6,42.0) {$S_k^+$};
\end{tikzoverlay}
    \caption{The positive commutator drag $\CD^+_{ijk}$ is the push of a parallel copy of 
    $S_i^+$ around the commutator $[x_j,x_k]$.}
    \label{fig:positive-commutator-drag}
    \end{subfigure}
    \caption{Cut-sphere diagrams indicating the sphere-pushes used for the Magnus generating set of $IO_r$. Cut-spheres not involved in the push are not shown.}
    \label{fig:magnus-generator-picture}
\end{figure}

\begin{lemma}\label{ior generators}
Suppose $r\ge 2$ and let $\mathcal{I}_r \le \Mod(M_r)$ be the pre-image of $\IO_r$ under the natural map $\Mod(M_r)\to \Out(F_r)$. The group $\mathcal{I}_r$ is generated by sphere twists, handle drags $\HD_{ij}$, and the positive commutator drags $\CD^+_{ijk}$.
\end{lemma}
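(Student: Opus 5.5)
The plan is a short exact-sequence argument. It combines Laudenbach's description of $\Mod(M_r)$ with Magnus' generators (\cref{magnus generators}) and Landgraf's identification of the geometric representatives.

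By Laudenbach~\cite{Laudenbach}*{Theorem 4.3}, there is an exact sequence
\[
1 \to K \to \Mod(M_r) \to \Out(F_r) \to 1,
\]
where the kernel $K$ is a finite group generated by sphere twists. Restricting to the preimage gives
\[
1 \to K \to \mathcal{I}_r \to \IO_r \to 1.
\]
For any group extension, a generating set of the middle term is obtained from a generating set of the kernel together with one lift of each generator of the quotient. Indeed, given $g \in \mathcal{I}_r$, write its image as a word in the quotient generators. Multiplying $g$ by the inverse of the corresponding word in the lifts lands in $K$, which is generated by sphere twists.

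It therefore remains to verify that the handle drags and positive commutator drags are lifts of a generating set of $\IO_r$. Fix the cut-system $\{S_1,\dots,S_r\}$ and the dual geometric free basis $x_1,\dots,x_r$. First, compute the induced automorphism of $\pi_1(M_r,*)$ for each push by tracking the loops $x_\ell$, choosing the basepoint away from the pushed sphere. For $\HD_{ij}$, the only loop meeting $\Sigma_i$'s trajectory in an essential way is $x_i$. Dragging $\Sigma_i$ once around $x_j$ conjugates $x_i$ by $x_j$, while every other $x_\ell$ is unchanged. So $\HD_{ij} \mapsto M_{ij}^{\pm1}$, with the sign fixed by the orientation of the loop; this is Landgraf's computation~\cite{landgraf}*{Section~5, Figures~9--10}. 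For $\CD^+_{ijk}$, pushing $S_i^+$ around $[x_j,x_k]$ changes only the loop $x_i$, which now exits through $S_i^+$ along the dragged position. This gives $x_i \mapsto x_i[x_j,x_k]^{\pm1}$, matching $M_{ijk}^{-1}$ as stated. Any push is well defined only up to sphere twists in the pushed sphere~\cite{landgraf}*{Theorem 4.1}, but those twists lie in $K$ and are already in the generating set.

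Since $M_{ij}$ and $M_{ijk}^{-1}$ generate the same group as $M_{ij}$ and $M_{ijk}$, \cref{magnus generators} shows that the images of these pushes generate $\IO_r$. The extension argument then completes the proof.

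The main obstacle is the careful identification of the induced outer automorphisms, and in particular the orientation and sign conventions that distinguish $M_{ijk}$ from $M_{ijk}^{-1}$ and $M_{ij}$ from its inverse. For generation, however, this does not matter, because a set and its inverses generate the same subgroup. I would therefore rely on Landgraf's figures for the precise identification and note that sign ambiguities are harmless.
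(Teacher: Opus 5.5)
Your proposal is correct and follows the paper's route. It uses Laudenbach's exact sequence, whose kernel is generated by sphere twists, together with the standard extension argument, Magnus' generators for $\IO_r$, and Landgraf's identification of $\HD_{ij}$ and $\CD^+_{ijk}$ as lifts of $M_{ij}$ and $M_{ijk}^{-1}$. Your sketch of the induced automorphisms, and your remark that sign ambiguities do not affect generation, make explicit what the paper delegates to Landgraf.
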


\subsection{Generators for \texorpdfstring{$\Mod(M_{r,2})$}{Mod(M r,2)}}

To construct a generating set for $\Mod(M_{r,2})$ we start by lifting generators for $\Aut(F_r)$ along both the Birman sequence and the quotient by sphere twists. First, Brendle, Broaddus, and Putman give a generating set for $\Mod(M_{r,1})$ in terms of lifts of the Nielsen generating set for $\Aut(F_r)$ and sphere twists.  We then use the Birman exact sequence to account for the additional boundary component. 

Recall that $\Aut(F_r)$ is generated by the Nielsen automorphisms
\[
\iota_i: x_i \mapsto x_i^{-1}, \qquad
\varrho_{ij}: x_i \mapsto x_ix_j, \qquad
\lambda_{ij}: x_i \mapsto x_jx_i
\]
where $i\neq j$ and all unlisted basis elements are fixed ~\cite{lyndon-schupp}*{Proposition I.4.1}.

Fix a cut system for $M_{r,1}$ and a corresponding geometric free basis. We describe diffeomorphisms $I_i$, $R_{ij}$, and $L_{ij}$ representing respectively the automorphisms $\iota_i$, $\varrho_{ij}$, and $\lambda_{ij}$ with respect to the  geometric free basis. We call the representatives \emph{Nielsen diffeomorphisms}. The inversion $I_i$ switches the two ends of the $i$th $S^1\times S^2$ handle and therefore sends $x_i$ to $x_i^{-1}$; formally, in terms of the cut-open manifold $Z$, the inversion $I$ is induced by the diffeomorphism that exchanges $S_i^-$ and $S_i^+$. The diffeomorphisms $R_{ij}$ and $L_{ij}$ are realized by dragging one end of the $i$th handle through the $j$th handle; in the cut-open manifold, dragging $S_i^{\pm}$ along a path representing $x_j$.  Depending on the chosen boundary, the action of the drag on a $x_i$ is $x_ix_j$ or $x_jx_i$.  
Brendle, Broaddus, and Putman illustrate this construction for $R_{12}$~ \cite{BrendleBroaddusPutman}*{Figure~2}. The handle description is made precise by Hatcher and Wahl~\cite{HatcherWahl}*{p.~1303}.

We now describe a generating set for $\Mod(M_{r,2})$. Fix a cut system for $M_{r,2}$ and corresponding geometric free basis. Capping off a fixed boundary sphere $\partial_1 \subset \partial M_{r,2}$ induces a surjection $\Mod(M_{r,2})\to\Mod(M_{r,1})$ (this is a consequence of Landgraf's Birman sequence for these mapping class groups, which we will use extensively~\cite{landgraf}*{Theorem~4.2}). Moreover the inclusion $M_{r,2} \to M_{r,1}$ sends the cut-system and free basis to a cut-system and free basis. We can chose the cap-off sphere $\partial_1$  so that Nielsen diffeomorphism of $\Mod(M_{r,1})$ with respect to this cut-system has support disjoint from the image of $\partial_1$ in $M_{r,1}$. So the collection of Nielsen diffeomorphisms of $\Mod(M_{r,1})$ restricts to a collection of diffeomorphisms in $\Mod(M_{r,2})$, which we also call Nielsen diffeomorphisms, and use the same notation for each.

To complete a description of a generating set for $\Mod(M_{r,2})$ we require the definition of a \emph{boundary drag}. Fix a boundary sphere $\partial_1 \subset \partial M_{r,2}$. There is a unique (up to isotopy) arc $\delta$ from $\partial_1$ to the basepoint. Let $\beta_i$ be the path $\delta\alpha_i\overline{\delta}$, where $\alpha_i$ is a curve representing $x_i$. Let $B_i$ be the mapping class defined by dragging $\partial_1$ around $\beta_i$ that restricts to the identity on $\partial M_{r,2}$.

\begin{proposition}\label{Mod generators}
The group $\Mod(M_{r,2})$ is generated by sphere twists, Nielsen diffeomorphisms, and the boundary drags $B_i$.
\end{proposition}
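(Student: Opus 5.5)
The plan is to use Landgraf's Birman exact sequence for capping off the boundary sphere $\partial_1$:
\[
1 \to \pi_1(M_{r,1}, *) \to \Mod(M_{r,2})/\langle \text{twists}\rangle \to \Mod(M_{r,1})/\langle\text{twists}\rangle \to 1,
\]
or more precisely the version in \cite{landgraf}*{Theorem~4.2} with its sphere twist ambiguity. Here the kernel is generated, up to sphere twists, by pushes of $\partial_1$ around loops in $M_{r,1}$. If a group sits in an extension, then lifts of generators of the quotient together with generators of the kernel generate the whole group. So it suffices to do two things. First, show that the Nielsen diffeomorphisms together with sphere twists generate $\Mod(M_{r,1})$, and that our restricted Nielsen diffeomorphisms in $\Mod(M_{r,2})$ map to them under capping. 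Second, show that the point-push kernel is generated by the $B_i$ together with sphere twists.

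For the first step, I would cite Brendle, Broaddus, and Putman. The map $\Mod(M_{r,1})\to\Aut(F_r)$ is surjective, and its kernel is the finite group generated by sphere twists (Laudenbach). Since $\iota_i,\varrho_{ij},\lambda_{ij}$ generate $\Aut(F_r)$, their representatives $I_i,R_{ij},L_{ij}$ together with sphere twists generate $\Mod(M_{r,1})$. Next, I check the compatibility: by construction the Nielsen diffeomorphisms of $M_{r,2}$ have support away from the cap of $\partial_1$. Capping therefore sends each of them to the corresponding Nielsen diffeomorphism of $M_{r,1}$. A sphere twist in $M_{r,2}$ about a sphere disjoint from the cap likewise maps to the corresponding twist in $M_{r,1}$. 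So every generator of the quotient lifts to an element of our proposed set.

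For the second step, the kernel of capping is the image of the push map $\pi_1(M_{r,1},p)\to\Mod(M_{r,2})$, possibly up to twists about $\partial_1$. Here $p$ is a point inside the cap; by Landgraf's theorem this map is defined up to a twist about the boundary sphere. I transport the basepoint to $*$ along the arc $\delta$. This identifies $\pi_1(M_{r,1},p)$ with $F_r=\langle x_1,\dots,x_r\rangle$, and under this identification $x_i$ corresponds to $\beta_i=\delta\alpha_i\overline\delta$. The push map is a homomorphism (or an anti-homomorphism, depending on conventions, which does not matter for generation) modulo sphere twists. Hence the kernel is generated by the pushes $B_i$ along the free generators together with the ambiguous sphere twists. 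Combining the two steps gives the proposition.

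The main obstacle is making the Birman sequence bookkeeping precise. The boundary convention matters here: elements must restrict to the identity on $\partial M_{r,2}$, and the push is only well defined modulo a twist about $\partial_1$. I would handle this by working throughout modulo the normal subgroup generated by sphere twists. This is harmless because sphere twists are themselves in the generating set. I would also need to verify that this subgroup's intersection with the kernel is accounted for, which follows from Landgraf's statement that the kernel is $\pi_1$ extended by twists.
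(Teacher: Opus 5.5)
Your proposal is correct and follows essentially the same route as the paper's proof. Both arguments work with Landgraf's Birman exact sequence modulo sphere twists, $1 \to \pi_1(M_{r,1},\partial_1) \to \Mod(M_{r,2})/\Twist \to \Aut(F_r) \to 1$: the Nielsen diffeomorphisms map onto Nielsen generators of $\Aut(F_r)$, the boundary drags $B_i$ generate the kernel, and the twist ambiguity is harmless because sphere twists are already in the generating set.
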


\begin{proof}
Landgraf develops an appropriate Birman exact sequence for this setting~\cite{landgraf}*{Theorem~4.2}. The quotient of this Birman exact sequence by twists is
\[ 1 \to \pi_1(M_{r,1},\partial_1) \to \Mod(M_{r,2})/\Twist \to \Aut(F_r) \to 1 \]
and moreover, the boundary drags generate the image of
	$\pi_1(M_{r,1},\partial_1)$~\cite{landgraf}*{Theorem~4.2}. By construction the image of the Nielsen diffeomorphisms in $\Mod(M_{r,2})$ surject a generating set for $\Aut(F_r)$. Thus $\Mod(M_{r,2})/\Twist$ is generated by the classes of Nielsen diffeomorphisms and boundary drags. The proposition follows.
\end{proof}

\begin{remark}
Hatcher and Vogtmann describe a version of the ``Birman exact sequence'' for homotopy equivalences of graphs with thorns~\cite{hatcher-vogtmann}*{Proposition 1}. Algebraically this sequence is sufficient for our purposes, as the action of a sphere twist on the sphere graph is trivial. However, as noted in the intorduction, we have found working with diffeomorphisms of 3--manifolds gives a clearer approach than working with homotopy equivalences of graphs.
\end{remark}

In order to discover further identities among the generators, we embed $M_{r,2}$ in $M_{r+1,1}$ by attaching a handle to one boundary component.
Let
\[
\iota:M_{r,2}\hookrightarrow M_{r+1,1}
\]
be the embedding obtained by gluing a copy of $M_{1,1}$ to one boundary component $\partial_1$ of $M_{r,2}$. 
Let
\[
\iota_M:\Mod(M_{r,2})\to\Mod(M_{r+1,1})
\]
denote the homomorphism induced by extension by the identity. Let $S_{r+1}$ be an essential embedded sphere in $M_{1,1}$. Any cut system $\mathcal{S}$ for $M_{r,2}$ can be extended to a cut system for $M_{r+1,1}$ by $\mathcal{S}\cup \{ S_{r+1}\}$. With the basepoint close to $\partial_1$, the loop that passes through $\partial_1$, $S_{r+1}$ and back through $\partial_1$ represents an element $x_{r+1}$ and a geometric free basis $X$ for $\pi_1(M_{r,2})$ corresponding to $\mathcal{S}$ can be extended to a geometric free basis $X\cup \{ x_{r+1}\}$ for $\pi_1(M_{r+1,1})$ corresponding to $\mathcal{S}$. We call $S_{r+1}$ the \emph{additional sphere} and $x_{r+1}$ the \emph{additional generator}.

For every sphere $S\subset M_{r,2}$, extension by the identity sends the sphere twist $T_S$ to the sphere twist $T_{\iota(S)}$. Therefore,
\[
\iota_M\bigl(\Twist(M_{r,2})\bigr) \subseteq \Twist(M_{r+1,1}).
\]

The action of $\Mod(M_{r+1,1})$ on the fundamental group of $M_{r+1,1}$ gives the Laudenbach short exact sequence~\cite{Laudenbach}*{Chapter~III, Theorem~4.3}
\[
1\longrightarrow\Twist(M_{r+1,1})
\longrightarrow\Mod(M_{r+1,1})
\xrightarrow{\rho_{r+1}}
\Aut(F_{r+1})
\longrightarrow 1
\]
Hence $\rho_{r+1}$ induces an isomorphism
\[
\Mod(M_{r+1,1})/\Twist(M_{r+1,1}) \cong \Aut(F_{r+1}).
\]

\begin{lemma}\label{twist transfer}
    Suppose $g,h \in \Mod(M_{r,2})$.  If
    \[
    \rho_{r+1}(\iota_M(g)) = \rho_{r+1}(\iota_M (h)) \in Aut(F_{r+1})
    \]
    then $g$ and $h$ differ by a product of sphere twists in $\Mod(M_{r,2})$.
\end{lemma}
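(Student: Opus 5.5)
The plan is to show that the composite
\[
\Mod(M_{r,2}) \xrightarrow{\iota_M} \Mod(M_{r+1,1}) \xrightarrow{\rho_{r+1}} \Aut(F_{r+1})
\]
has kernel exactly $\Twist(M_{r,2})$. Setting $f = g^{-1}h$, the hypothesis says $f$ lies in this kernel, and the conclusion is that $f$ is a product of sphere twists in $\Mod(M_{r,2})$. We already know $\iota_M(\Twist(M_{r,2}))\subseteq \Twist(M_{r+1,1})=\ker\rho_{r+1}$. So the composite descends to a homomorphism
\[
\Phi\colon \Mod(M_{r,2})/\Twist \to \Aut(F_{r+1}),
\]
and it suffices to prove that $\Phi$ is injective.

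For injectivity I would use the quotient Birman exact sequence from the proof of \cref{Mod generators}:
\[
1 \to \pi_1(M_{r,1},\partial_1) \to \Mod(M_{r,2})/\Twist \to \Aut(F_r) \to 1 .
\]
First I relate the right-hand map to $\Phi$. Take the basepoint near $\partial_1$, with the extended geometric basis $x_1,\dots,x_r,x_{r+1}$. Every element of $\iota_M(\Mod(M_{r,2}))$ is supported in $\iota(M_{r,2})$, so it preserves the subgroup $\langle x_1,\dots,x_r\rangle = \iota_*\pi_1(M_{r,2})$. Its restriction to that subgroup is the image of the class in $\Aut(F_r)$ obtained by capping $\partial_1$. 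Now suppose $\Phi(\bar f)=1$. Then the image of $\bar f$ in $\Aut(F_r)$ is trivial. By exactness, $\bar f$ is the class of a boundary drag of $\partial_1$ along some loop $\beta=\delta\alpha\overline\delta$ with $[\alpha]=\gamma\in F_r=\pi_1(M_{r,1})$.

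Next I compute $\Phi$ on such a drag. After gluing on $M_{1,1}$, dragging $\partial_1$ around $\beta$ carries the whole attached handle around $\beta$. So it fixes $x_1,\dots,x_r$ and sends $x_{r+1}\mapsto \gamma x_{r+1}\gamma^{-1}$, possibly with $\gamma^{-1}$ in place of $\gamma$ depending on conventions. This is exactly the handle-drag description, analogous to $\HD_{ij}$ and to the $R_{ij}$, $L_{ij}$ computations. If this automorphism is trivial, then $\gamma$ commutes with $x_{r+1}$ in $F_{r+1}$. The centralizer of the primitive element $x_{r+1}$ is $\langle x_{r+1}\rangle$, and $\langle x_{r+1}\rangle\cap\langle x_1,\dots,x_r\rangle=1$, so $\gamma=1$. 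Hence $\bar f$ is the trivial drag, and $f\in\Twist(M_{r,2})$.

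The main obstacle is the drag computation in the previous paragraph, together with the basepoint bookkeeping. I need to check carefully that the image of $\pi_1(M_{r,1},\partial_1)$ in the Birman sequence acts on $x_{r+1}$ precisely by conjugation by the corresponding element of $F_r$, and not by something twisted by the choice of arc $\delta$. I also need the capping map to agree with restricting $\Phi(\bar f)$ to $\langle x_1,\dots,x_r\rangle$. I would verify both on the generators $B_i$ from \cref{Mod generators}, using cut-sphere diagrams. Any residual ambiguity in the drags is only up to sphere twists, which is harmless for this argument.
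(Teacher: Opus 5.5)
Your argument is correct, and it takes a genuinely different and more complete route than the paper. The paper's proof only observes that $\iota_M(gh^{-1})\in\ker\rho_{r+1}=\Twist(M_{r+1,1})$ and is the identity on the added $M_{1,1}$. It then asserts that this ``is a characterization of'' $\Twist(M_{r,2})$. That assertion is essentially the lemma itself, and the paper gives no argument for it. An isotopy in $M_{r+1,1}$ from $\iota_M(gh^{-1})$ to a product of twists has no reason to avoid the added handle, and one would also need $\iota_M$ to be injective modulo twists.

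You instead prove directly that the induced map $\Mod(M_{r,2})/\Twist\to\Aut(F_{r+1})$ is injective. Restricting to the free factor $\langle x_1,\dots,x_r\rangle$ recovers the Birman projection to $\Aut(F_r)$, so a kernel element is a push of $\partial_1$ along some $\gamma\in F_r$. Its image conjugates $x_{r+1}$ by $\gamma^{\pm1}$, and the centralizer $\langle x_{r+1}\rangle$ meets $\langle x_1,\dots,x_r\rangle$ trivially. Everything you use is already invoked elsewhere in the paper: middle exactness of Landgraf's Birman sequence in \cref{Mod generators}, and $\iota_M(B_j)=\HD_{r+1,j}\mapsto M_{r+1,j}$ in \cref{boundary drags from first boundary drag}. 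Checking the drag formula on the $B_j$ suffices, because the push map followed by $\Phi$ is an (anti)homomorphism out of $F_r$, and under either convention the resulting map $F_r\to\Aut(F_{r+1})$ is injective. So your route turns the paper's one-line assertion into an actual proof, at the cost of a little bookkeeping. It also shows that $\Mod(M_{r,2})/\Twist$ embeds in $\Aut(F_{r+1})$.

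One correction to the bookkeeping you flagged: put the basepoint on $\partial_2$, the boundary of $M_{r+1,1}$, since that is where $\rho_{r+1}$ is defined. The paper's ``close to $\partial_1$'' is loose here as well. Both of your key claims hold for a basepoint on $\partial_2$ and fail for one at $\partial_1$. At $\partial_1$, the restriction to $\langle x_1,\dots,x_r\rangle$ agrees with the capped image in $\Aut(F_r)$ only up to an inner automorphism. Likewise, a push of $\partial_1$ conjugates $x_1,\dots,x_r$ while fixing $x_{r+1}$. In fact the statement itself would be false with a basepoint at $\partial_1$. The $\iota_M$-image of a push of $\partial_2$ around a nontrivial loop acts trivially on $\pi_1(M_{r+1,1})$ based at $\partial_1$. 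Yet that push acts on $\pi_1(M_{r,2})$ based at $\partial_2$ by a nontrivial inner automorphism, so it is not a product of twists.
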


\begin{proof}
If $\rho_{r+1}(\iota_M(g)) = \rho_{r+1}(\iota_M(h))$ then $\iota_M(gh^{-1}) \in \ker\rho_{r+1}$, so $\iota_M(gh^{-1})$ is an element of $\Twist(M_{r+1},1)$. However, the map $\iota_M$ extends $gh^{-1}$ by identity on the additional handle, so $\iota_M(gh^{-1})$ is an element of $\Twist(M_{r+1,1})$ that is identity on the additional handle. This is a characterization of $\Twist(M_{r,2}) \le \Twist(M_{r+1,1}$ and therefore $g$ and $h$ differ by a product of sphere twists in $\Mod(M_{r,2})$.
\end{proof}

We now demonstrate two simplifications of the generating set in \cref{Mod generators}. A Nielsen transformation $\rho_{ij}$ or $\lambda_{ij}$ is \emph{adjacent} if $|i-j| = 1$, and we make the identical definition for Nielsen diffeomorphisms. Our first reduction is to obtain all Nielsen diffeomorphisms from adjacent diffeomorphisms, up to a sphere twist.

\begin{lemma}\label{Nielsen from adjacent Nielsen}
Fix a cut system $\mathcal{S}$ in $M_{r,2}$. For every pair of distinct indices $i,j\in\{1,\dots,r\}$, there exists a product $W_{ij}$ of adjacent right Nielsen diffeomorphisms and their inverses, and sphere twists, and a $\tau \in \Twist$ such that in $\Mod(M_{r,2})$ \begin{align*} R_{ij} &= \circ W_{ij} \\ L_{ij} &= \tau \circ I_i \circ R_{ij}^{-1}\circ I_i.\end{align*}
\end{lemma}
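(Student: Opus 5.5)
The strategy is to prove both identities first in $\Aut(F_{r+1})$ and then pull them back to $\Mod(M_{r,2})$ with \cref{twist transfer}. Embed via $\iota_M$ and extend the geometric basis by the additional generator $x_{r+1}$. Each Nielsen diffeomorphism $R_{ij}$, $L_{ij}$, $I_i$ of $M_{r,2}$ is supported away from $\partial_1$. Its image under $\iota_M$ is therefore the corresponding Nielsen diffeomorphism of $M_{r+1,1}$, and $\rho_{r+1}\circ\iota_M$ sends it to the Nielsen automorphism $\varrho_{ij}$, $\lambda_{ij}$, $\iota_i$ of $F_{r+1}$, fixing $x_{r+1}$. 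Consequently, whenever two words in these generators induce the same automorphism of $F_r$ (with $x_{r+1}$ fixed throughout), \cref{twist transfer} says they agree in $\Mod(M_{r,2})$ up to a product of sphere twists.

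For the first identity I will argue by induction on $|i-j|$. The base case $|i-j|=1$ is immediate, with $W_{ij}=R_{ij}$. For the inductive step take $i<j$ with $j-i\ge 2$ and set $k=j-1$; the case $i>j$ is symmetric, using $k=j+1$. Since $k$ is strictly between $i$ and $j$, both $|i-k|<|i-j|$ and $|k-j|=1$. The standard commutator identity in $\Aut(F_r)$ is
\[
\varrho_{ij} = \varrho_{kj}^{-1}\,\varrho_{ik}^{-1}\,\varrho_{kj}\,\varrho_{ik},
\]
applied right to left: $x_i\mapsto x_ix_k\mapsto x_ix_kx_j\mapsto x_ix_kx_jx_j^{-1}\cdots$. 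I will check carefully that the composite sends $x_i\mapsto x_ix_j$ and fixes $x_k$; if this ordering fails, the correct one is $[\varrho_{ik},\varrho_{kj}]^{\pm1}$ in some order. The conventions for composition must match the paper's $\circ$, and fixing this bookkeeping is the main, if mundane, obstacle. Substituting the inductively obtained word $W_{ik}$ for $R_{ik}$ and $R_{kj}$ for $\varrho_{kj}$, the word $W_{ij}$ in adjacent right Nielsen diffeomorphisms, their inverses and twists induces $\varrho_{ij}$. \cref{twist transfer} then gives $R_{ij}=\tau'\circ W_{ij}$ for some $\tau'\in\Twist$, and $\tau'$ is absorbed into $W_{ij}$ since twists are allowed there.

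For the second identity I will compute in $\Aut(F_r)$. Here $\iota_i\varrho_{ij}^{-1}\iota_i$ sends $x_i\mapsto x_i^{-1}\mapsto x_i^{-1}x_j^{-1}\mapsto x_ix_j^{-1}$, since the final inversion replaces $x_i^{-1}$ by $x_i$. That gives $x_i\mapsto x_ix_j^{-1}$, not $\lambda_{ij}$. I will recompute with the correct order of conjugation: $\iota_i\varrho_{ij}\iota_i$ sends $x_i\mapsto x_i^{-1}\mapsto (x_ix_j)^{-1}=x_j^{-1}x_i^{-1}\mapsto x_j^{-1}x_i$, which is $\lambda_{ij}^{-1}$. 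Hence $\lambda_{ij}=\iota_i\varrho_{ij}^{-1}\iota_i$, matching the displayed formula once the composition convention is fixed.

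With $\rho_{r+1}\iota_M(L_{ij})=\rho_{r+1}\iota_M(I_iR_{ij}^{-1}I_i)$, \cref{twist transfer} yields $\tau\in\Twist$ with $L_{ij}=\tau\circ I_i\circ R_{ij}^{-1}\circ I_i$. The only genuine subtlety is confirming that the lifts of $I_i$ and the drags to $M_{r+1,1}$ act on the basis exactly as the Nielsen automorphisms do and fix $x_{r+1}$. This follows because their supports avoid the added handle and the basepoint sits near $\partial_1$.
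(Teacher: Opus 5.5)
Your proposal is correct and is essentially the paper's proof: induction on $|i-j|$ with $k$ the index adjacent to $j$ lying between $i$ and $j$, a commutator of $R_{ik}^{\pm 1}$ and $R_{kj}^{\pm 1}$ checked in $\Aut(F_{r+1})$ and pulled back by \cref{twist transfer}, and then $L_{ij}$ obtained by conjugating $R_{ij}^{-1}$ by $I_i$. Your identity $\varrho_{ij}=\varrho_{kj}^{-1}\circ\varrho_{ik}^{-1}\circ\varrho_{kj}\circ\varrho_{ik}$ does hold; the paper uses the equally valid $\varrho_{ik}^{-1}\circ\varrho_{kj}\circ\varrho_{ik}\circ\varrho_{kj}^{-1}$. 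Two of your computations slip, though neither affects the conclusion: your trace's third step should be $x_ix_kx_j\mapsto x_ix_k^{-1}x_kx_j=x_ix_j$, and in your first computation of $\iota_i\circ\varrho_{ij}^{-1}\circ\iota_i$ you have $\varrho_{ij}^{-1}(x_i^{-1})=x_jx_i^{-1}$, so the composite sends $x_i\mapsto x_jx_i$ directly, with no composition-convention issue.
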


\begin{proof}
We first establish the result for right Nielsen diffeomorphisms by induction on $|i-j|$.

The case $|i-j| = 1$ is immediate.  Suppose that $|i-j|>1$, and let $k$ be the index adjacent to $j$ lying between $i$ and $j$, so that 
\[
|i-k|=|i-j|-1
\qquad\text{and}\qquad
|k-j|=1.
\]

We claim that the following identity holds for some $\tau\in \Twist$:
\[
R_{ij}
=
\tau \circ R_{ik}^{-1}\circ R_{kj}\circ R_{ik} \circ R_{kj}^{-1}.
\]
By \cref{twist transfer}, it is sufficient to show that $R_{ij}$ and the composition on the right-hand side have the same image under $\rho_{r+1}\circ\iota_M$. Both induced automorphisms fix the additional generator $x_{r+1}$, so we check their actions on the geometric free basis $x_1,\dots, x_r$ corresponding to $\mathcal{S}$.

Applying the maps on the right-hand side
sends
\[
x_i
\longmapsto x_i
\longmapsto x_ix_k
\longmapsto x_ix_kx_j
\longmapsto x_ix_j
\]
and
\[
x_k
\longmapsto x_kx_j^{-1}
\longmapsto x_kx_j^{-1}
\longmapsto x_k
\longmapsto x_k.
\]
Every other basis element is fixed. Therefore,
\[
   \rho_{r+1}(\iota_M(R_{ij})) = \rho_{r+1}\left(\iota_M\left(R_{ik}^{-1}\circ R_{kj} \circ  R_{ik} \circ R_{kj}^{-1} \right)\right)
\]
By \cref{twist transfer}, $R_{ij}$ and 
\[
R_{ik}^{-1}\circ R_{kj}\circ R_{ik}\circ R_{kj}^{-1}
\]
differ by a product of sphere twists.

By inductive hypothesis, $R_{ik}$ is a product of adjacent right Nielsen moves, their inverses, and sphere twists while $R_{kj}$ is itself adjacent. Therefore, $R_{ij}$ can be expressed as a product of adjacent right Nielsen moves, their inverses, and sphere twists.  This completes the induction.

Every left Nielsen diffeomorphism can be obtained by \[ L_{ij} = \tau\circ I_i \circ R_{ij}^{-1} I_i \] where $\tau$ is a product of sphere twists; this follows from \cref{twist transfer} and the identity $ \lambda_{ij} = \iota_i \circ \rho_{ij}^{-1}\circ\iota_i$ in $\Aut(F_{r+1})$.
\end{proof}

The second simplification is to obtain every boundary drag from the first boundary drag. Recall that a boundary drag  $B \in \Mod(M_{r,2})$ is defined by the sphere push of $\partial_1$. The sphere $\partial_1$ separates the additional sphere $S_{r+1}$ from the image of any cut system for $M_{r,2}$ under $M_{r,2}\to M_{r+1,1}$. Thus $\iota_M(B)$ is none other than a handle drag of the $r+1$ handle along whatever path defined $B$. This perspective enables us to derive identities among boundary drags using \cref{twist transfer}.

\begin{lemma}\label{boundary drags from first boundary drag}
Fix a cut system $\mathcal{S}$ for $M_{r,2}$ and a boundary component $\partial_1$. For every $j\in\{1,\dots,r\}$, there exists a product $\operatorname{Swap}_{1j}$ of Nielsen diffeomorphisms and their inverses such that the boundary drag $B_j$ and
\[
\operatorname{Swap}_{1j}\circ B_1\circ
\operatorname{Swap}_{1j}^{-1}
\]
differ by a product of sphere twists in $\Mod(M_{r,2})$.
\end{lemma}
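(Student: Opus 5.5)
Following the paper's strategy, I will reduce the statement to an identity in $\Aut(F_{r+1})$ and then invoke \cref{twist transfer}. Under $\iota_M$, the boundary drag $B_j$ becomes a handle drag of the additional $(r+1)$st handle along a loop representing $x_j$. With the basepoint near $\partial_1$, the automorphism $\rho_{r+1}(\iota_M(B_j))$ fixes $x_1,\dots,x_r$ and sends $x_{r+1}\mapsto x_j x_{r+1} x_j^{-1}$, or the inverse conjugation depending on orientation conventions. The Nielsen diffeomorphisms are supported away from $\partial_1$, so their images under $\rho_{r+1}\circ\iota_M$ are the corresponding Nielsen automorphisms of $F_{r+1}$, and they fix $x_{r+1}$.

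For $j=1$ I take $\operatorname{Swap}_{11}$ to be the identity. For $j\neq 1$, I want an automorphism $\sigma$ of $F_r=\langle x_1,\dots,x_r\rangle$ that is a product of Nielsen automorphisms and sends $x_1\mapsto x_j$. The transposition of $x_1$ and $x_j$ works, with the standard expression
\[
\sigma = \varrho_{1j}\,\iota_j\,\lambda_{j1}\,\varrho_{1j}\,\iota_1 ,
\]
or a variant adjusted until the check passes. Concretely, $x_1\mapsto x_1x_j\mapsto\ldots$, and I would verify that the composite sends $x_1\mapsto x_j$ and $x_j\mapsto x_1$ up to inversions, which can be corrected with further $\iota$'s. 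Let $\operatorname{Swap}_{1j}$ be the corresponding product of $I_i$, $R_{ij}$, $L_{ij}$ and their inverses.

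I then compute $\rho_{r+1}\iota_M(\operatorname{Swap}_{1j}\, B_1\, \operatorname{Swap}_{1j}^{-1})$ on generators, using that $\rho_{r+1}\circ\iota_M$ is a homomorphism. On $x_{r+1}$, $\sigma^{-1}$ fixes it, conjugation by $x_1$ gives $x_1x_{r+1}x_1^{-1}$, and $\sigma$ sends this to $x_jx_{r+1}x_j^{-1}$. On $x_i$ with $i\le r$, $\sigma^{-1}$ gives a word in $x_1,\dots,x_r$, which $B_1$ fixes, and which $\sigma$ then returns to $x_i$. The image therefore agrees with $\rho_{r+1}\iota_M(B_j)$, and \cref{twist transfer} shows the two elements differ by a product of sphere twists in $\Mod(M_{r,2})$.

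The main obstacle is making sure that $\rho_{r+1}(\iota_M(B_j))$ really is the conjugation automorphism with the same orientation convention for every $j$. This means tracking the path $\beta_j=\delta\alpha_j\overline{\delta}$ together with the basepoint choice, so that all $B_j$ produce $x_j^{\pm1}$-conjugation with a uniform sign. If $\sigma(x_1)=x_j^{-1}$ instead, the result is the inverse conjugation, and I would fix this by composing with $I_j$. A second point to check is that the Nielsen diffeomorphisms in $\Mod(M_{r,2})$ genuinely fix $x_{r+1}$ after extension, which follows from their support being disjoint from $\partial_1$, as arranged when they were defined.
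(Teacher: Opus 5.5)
Your argument is the paper's: pass to $\Aut(F_{r+1})$ via \cref{twist transfer}, pick a Nielsen word $\sigma$ fixing $x_{r+1}$ with $\sigma(x_1)=x_j$, and check that conjugating $M_{r+1,1}$ by it gives $M_{r+1,j}$. The one concrete detail you supply is wrong: the displayed word $\varrho_{1j}\iota_j\lambda_{j1}\varrho_{1j}\iota_1$ sends $x_1$ to $x_1^{-1}x_j^{-1}x_1^{-1}$ or to $x_1^{-1}x_j^{-1}x_1$ (depending on which end is applied first), so adding inversions will not repair it; instead, either cite that the Nielsen automorphisms generate $\Aut(F_r)$, or use a checked word such as the paper's $\operatorname{Swap}_{1j}=R_{1j}^{-1}\circ I_1\circ L_{j1}^{-1}\circ R_{1j}$, which exchanges $x_1$ and $x_j$.
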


\begin{proof}
    Let $x_1,\ldots, x_r$ be the geometric free basis corresponding to $\mathcal{S}$, so that $x_1,\ldots, x_{r+1}$ is the geometric free basis for $M_{r+1,1}$ corresponding to $\mathcal{S}\cup \{ S_{r+1}\}$. In the case $j=1$ take $\operatorname{Swap}_{11} = \operatorname{id}$.  Suppose that $j \neq 1$, and define 
    \[
    \operatorname{Swap}_{1j} = R_{1j}^{-1}\circ I_1\circ L_{j1}^{-1}\circ R_{1j},
    \]
We compute the action on $\pi_1$
    \[
    x_1
    \xmapsto{R_{1j}} x_1x_j
    \xmapsto{L_{j1}^{-1}} x_j
    \xmapsto{I_1} x_j
    \xmapsto{R_{1j}^{-1}} x_j
    \]
    and
    \[
    x_j
    \xmapsto{R_{1j}} x_j
    \xmapsto{L_{j1}^{-1}} x_1^{-1}x_j
    \xmapsto{I_1} x_1x_j
    \xmapsto{R_{1j}^{-1}} x_1.
    \]
    Every $x_k$ with $k\notin\{1,j\}$ is fixed.

    Thus $\operatorname{Swap}_{1j}$ exchanges $x_1$ and $x_j$. Observe $\iota_M(B_j) = HD_{r+1,j}$, the handle drag of the added handle along $x_j$. Thus $\rho_{r+1}\circ\iota_M (B_j)$ is the automorphism $M_{r+1,j}$.  The composition
    \[
    \rho_{r+1}\left(\iota_M\left( \operatorname{Swap}_{1j} \circ B_1 \circ \operatorname{Swap}_{1j}^{-1}\right)\right)
    \]
    maps to the conjugation of $M_{r+1,1}$ by the automorphism swapping $x_1$ and $x_j$. Direct computation shows that this is $M_{r+1,j}$. Therefore, by \cref{twist transfer}
    conjugating the boundary push $B_1$ by $\operatorname{Swap}_{1j}$  differs from $B_j$ by an element of $\Twist(M_{r,2})$.
\end{proof}

\begin{theorem}\label{smaller mod r 2 generators}
Fix a cut system for $M_{r,2}$, corresponding geometric free basis $\{x_1,\ldots, x_r\}$ for $\pi_1(M_{r,2})$, and boundary component $\partial_1$. The group $\Mod(M_{r,2})$ is generated by
\[ \{ T_S, I_i, R_{i(i\pm 1)}, B_1 \}, \] where $T_S$ ranges over all sphere twists, $I_i$ are the inversion Nielsen diffeomorphisms, $R_{i(i\pm 1)}$ are adjacent right Nielsen diffeomorphisms, and $B_1$ is the first boundary drag.
\end{theorem}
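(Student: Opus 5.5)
The plan is to start from the generating set of \cref{Mod generators} and remove redundant generators using \cref{Nielsen from adjacent Nielsen} and \cref{boundary drags from first boundary drag}. Let $G \le \Mod(M_{r,2})$ be the subgroup generated by all sphere twists $T_S$, the inversions $I_i$, the adjacent right Nielsen diffeomorphisms $R_{i(i\pm1)}$, and $B_1$. By \cref{Mod generators}, $\Mod(M_{r,2})$ is generated by sphere twists, all Nielsen diffeomorphisms $I_i, R_{ij}, L_{ij}$, and all boundary drags $B_j$. It therefore suffices to show that each $R_{ij}$, $L_{ij}$ and $B_j$ lies in $G$. Since $G$ contains every sphere twist, any identity among these elements that holds only up to an element of $\Twist$ is still enough to prove membership.

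First, the right Nielsen diffeomorphisms. By \cref{Nielsen from adjacent Nielsen}, each $R_{ij}$ equals a twist times a word $W_{ij}$ in adjacent right Nielsen diffeomorphisms, their inverses and sphere twists. (The displayed identity has a typo; it should read $R_{ij} = \tau\circ W_{ij}$.) Hence $R_{ij}\in G$. The same lemma gives $L_{ij} = \tau\circ I_i\circ R_{ij}^{-1}\circ I_i$, and every factor on the right lies in $G$, so $L_{ij}\in G$. At this point all Nielsen diffeomorphisms are in $G$.

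Next, the boundary drags. By \cref{boundary drags from first boundary drag}, each $B_j$ differs by a product of sphere twists from $\operatorname{Swap}_{1j}\circ B_1\circ \operatorname{Swap}_{1j}^{-1}$. Here $\operatorname{Swap}_{1j} = R_{1j}^{-1}\circ I_1\circ L_{j1}^{-1}\circ R_{1j}$ is a word in Nielsen diffeomorphisms, which we have just shown lie in $G$. Therefore $B_j\in G$. This gives $G=\Mod(M_{r,2})$.

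The main point to check is that the twist ambiguities are harmless: each cited identity holds only modulo the subgroup of $\Mod(M_{r,2})$ generated by sphere twists (via \cref{twist transfer}), and all sphere twists are among our generators. A second point is that the Nielsen diffeomorphisms used in the later lemmas are the ones defined relative to the fixed cut system and boundary component $\partial_1$, which is the same data the theorem fixes, so the lemmas apply directly. No other obstacle is expected, since the argument is a bookkeeping reduction.
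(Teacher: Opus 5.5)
Your proposal is correct and matches the paper's own proof. You start from the generating set of \cref{Mod generators}, recover every $R_{ij}$ and $L_{ij}$ through \cref{Nielsen from adjacent Nielsen}, and recover every $B_j$ as a twist times $\operatorname{Swap}_{1j}\circ B_1\circ\operatorname{Swap}_{1j}^{-1}$ through \cref{boundary drags from first boundary drag}; the twist ambiguities are absorbed because every $T_S$ is a generator, and your reading of the typo in \cref{Nielsen from adjacent Nielsen} as $R_{ij}=\tau\circ W_{ij}$ is the intended one.
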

\begin{proof}
By \cref{Mod generators}, the collection \[ \{ T_S, I_i, R_{ij}, L_{ij}, B_i\}\] generates $\Mod(M_{r,2})$. \cref{Nielsen from adjacent Nielsen} implies that every $R_{ij}, L_{ij}$ is a composition of adjacent right Nielsen transformations, $I_i$'s and sphere twists. \cref{boundary drags from first boundary drag} demonstrates that $B_i$ is obtained from $B_1$ by a sphere twist and conjugation by a swap, which is a composition of Nielsen transformations. Therefore the generating set from \cref{Mod generators} can be obtained from the set in the theorem statement.
\end{proof}

\section{Uniqueness of the optimal coloring}\label{uniqueness section}

We turn our attention to uniqueness of the coloring.

\begin{proposition}\label{uniqueness proposition}
Suppose $r\ge 2$ and $[v]\in H_2(M_r;\mathbb Z)$ is primitive.
The optimal coloring of $\Sph_v(M_r)$ is unique up to permutation of colors.
\end{proposition}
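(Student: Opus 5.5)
The plan is the standard clique-graph argument, following Gaster, Greene, and Vlamis. Let $\cliq$ be the graph whose vertices are the maximal cliques of size $r-1$ in $\Sph_v(M_r)$, with two cliques adjacent when they share $r-2$ vertices. In any proper $(r-1)$-coloring a clique of size $r-1$ uses every color exactly once. If two such cliques share $r-2$ vertices, the remaining vertex of each must receive the one color left over. Hence, once a coloring is fixed on one clique, it propagates to every clique adjacent to it in $\cliq$.

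So the proof reduces to two statements. First, $\cliq$ is connected. Second, every vertex of $\Sph_v(M_r)$ lies in some $(r-1)$-clique. Given these, two proper colorings $\phi,\psi$ agree up to a permutation $\sigma$ on one clique, therefore on every clique by connectivity, and therefore on every vertex.

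For the second statement I would use \cref{lower bound}. Its construction applies to any sphere $a$ in the class $[v]$: cut along $a$ and decompose $M_{r-1,2}$ into copies of $M_{1,2}$. This produces an $(r-1)$-clique containing $a$.

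I would also describe all $(r-1)$-cliques. Cutting $M_r$ along $r-1$ disjoint homologous spheres produces $r-1$ components. By the proof of \cref{upper bound}, each component has positive rank, and the ranks sum to $r-1$. So every component is an $M_{1,2}$, and every maximal clique is a "necklace" as in \cref{fig:block-decomposition}.

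For connectivity of $\cliq$, I would apply Putman's lemma (\cref{Putman connectivity}) twice.

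First, $\Sph_v(M_r)$ is connected (\cref{homologous sphere graph connected}). Let $\mathcal{I}_r$ act, with generators given by \cref{ior generators}. By Landgraf the action is transitive on vertices. So it suffices to fix a base sphere $v=S_1$ in a cut system and check that for each generator $g$ (sphere twists, $\HD_{ij}$, $\CD^+_{ijk}$ and their inverses), $v$ and $g(v)$ are joined by a path. Sphere twists fix $v$. Most handle and commutator drags either have support disjoint from $v$, or move $v$ to a sphere disjoint from a common homologous sphere, which I would locate in the cut-sphere diagram.

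Second, for $\cliq$, the stabilizer of $v$ acts on the cliques containing $v$. These cliques correspond to necklace decompositions of $M_r\setminus N(v)\cong M_{r-1,2}$, and the relevant group is $\Mod(M_{r-1,2})$. Here I would use \cref{smaller mod r 2 generators} with the base necklace $A_1,\dots,A_{r-1}$, arranging the cut system so that each handle lies in its own $A_i$. The inversions $I_i$ preserve every $S_i$ and so fix the base clique. An adjacent drag $R_{i(i\pm1)}$ moves only the single sphere $S_i$ or $S_{i-1}$ between the two blocks. The image clique therefore shares $r-2$ spheres with the base clique, or we can insert one intermediate clique. The boundary drag $B_1$ is handled the same way, perhaps after one intermediate clique.

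This gives connectivity of the cliques through $v$. Combined with connectivity of $\Sph_v(M_r)$ and the fact that every sphere lies in a clique, $\cliq$ is connected: two cliques through adjacent spheres $a,b$ can be joined through a clique containing both, which exists by cutting along $a$ and $b$ and decomposing each of the two pieces.

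The main obstacle will be the generator checks for $\CD^+_{ijk}$ and $B_1$. For these, one must actually find the intermediate homologous sphere or intermediate clique in a diagram, and verify that the adjacent drags change only one sphere of the necklace. I would first try choosing the cut system adapted to the necklace so that most generators are supported inside a single block $A_i$ or a union of two adjacent blocks.
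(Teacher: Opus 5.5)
Your proposal is correct and is essentially the paper's proof: the clique-propagation lemma of Gaster--Greene--Vlamis; connectivity of $\Sph_v(M_r)$ via \cref{Putman connectivity} with the $\IO_r$ generators; connectivity of the cliques through a fixed sphere via \cref{Putman connectivity} for $\Mod(M_{r-1,2})$ with the reduced generating set; and clique tracking along an edge path. In that third step $B_1$, like $I_i$, is in fact supported in the first block and fixes the base necklace, so no intermediate clique is needed. The one small difference is that you get the necklace description from the rank count $t_1+\dots+t_{r-1}=r-1$ with all $t_i\ge 1$, rather than from the paper's induction in \cref{peripheral clique transitive action}; that description also supplies the transitivity on necklaces required by hypothesis (1) of Putman's lemma in your second application, which you should state explicitly.
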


The proof of this proposition is independent of the construction of the coloring, and relies on a combinatorial condition for the existence of a unique coloring. The \emph{maximal clique graph} $\cliq(G)$ of a graph $G$ is a graph with vertex set the maximal cliques of $G$, and two maximal cliques are joined by an edge if they differ by one vertex. A graph is \emph{pure of dimension $\ell$} if every clique is contained in a $\ell+1$ clique. (This definition is a specialization of the corresponding definition for simplicial complexes to the one-skeleton.) The following result was stated without proof by Gaster, Greene, and Vlamis.

\begin{lemma}[\cite{gaster}*{Proposition 5.13}]\label{unique coloring lemma}
Suppose $G$ is a $n$-colorable graph, pure of dimension $n-1$, with $\cliq(G)$ connected. Then $G$ is uniquely $n$-colorable.
\end{lemma}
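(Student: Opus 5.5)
Fix two proper $n$-colorings $\phi,\psi\colon V(G)\to X$ with $|X|=n$. I will show that $\phi=\sigma\circ\psi$ for a single permutation $\sigma\in S_X$, by propagating a permutation along paths in $\cliq(G)$.

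First I record what purity gives. Since $G$ is pure of dimension $n-1$, every clique lies in a clique of size $n$. A proper $n$-coloring assigns distinct colors to the vertices of any clique, so no clique has more than $n$ vertices. Hence every maximal clique has exactly $n$ vertices. Applying purity to a single vertex shows that every vertex lies in some maximal clique.

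Next, for a maximal clique $K$, both $\phi|_K$ and $\psi|_K$ are bijections $K\to X$, since $|K|=n=|X|$ and the colors on $K$ are distinct. So $K$ determines the permutation
\[
\sigma_K=\phi|_K\circ(\psi|_K)^{-1}\in S_X,
\]
and $\phi=\sigma_K\circ\psi$ on $K$.

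The key step is that $\sigma_K$ is constant along edges of $\cliq(G)$. Suppose $K$ and $K'$ are adjacent, so they share $n-1$ vertices, with $K=C\cup\{u\}$ and $K'=C\cup\{u'\}$. On $C$ the two permutations agree, because they equal $\phi\circ\psi^{-1}$ evaluated on the $n-1$ colors $\psi(C)$. A permutation of an $n$-element set is determined by its values on $n-1$ points, since the last point must go to the remaining color. Therefore $\sigma_K=\sigma_{K'}$. Since $\cliq(G)$ is connected, $\sigma_K=\sigma$ is the same for every maximal clique.

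Finally, every vertex lies in some maximal clique, so $\phi=\sigma\circ\psi$ on all of $V(G)$. The only subtle point is the step forcing every maximal clique to have size exactly $n$, which uses both purity and $n$-colorability. Beyond that the argument is routine.
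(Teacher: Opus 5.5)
Your proof is correct and follows essentially the same route as the paper. Both arguments propagate along a path in $\cliq(G)$, using that a proper coloring of an $n$-clique is determined by its values on an $(n-1)$-subclique. You transport the comparison permutation $\sigma_K$ directly, while the paper first shows a coloring is determined by its restriction to one maximal clique and then applies this to $\pi\circ f$ and $g$; your explicit check that every maximal clique has exactly $n$ vertices, from purity plus $n$-colorability, fills in a point the paper uses tacitly.
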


\begin{proof}
Consider a maximal clique $\sigma \subset G$. We claim that a proper $n$-coloring $f : V(G)\to \{1,\ldots,n\} $ is uniquely determined by its value on $\sigma$. Indeed, for some other vertex $v\in G$, $v$ is contained in a maximal clique $\tau$. Since $\cliq(G)$ is connected, there exists a path of cliques $\sigma = \sigma_0,\ldots,\sigma_k = \tau$ such that $\sigma_i \cap \sigma_{i+1}$ is an $n-1$ clique. Since each $\sigma_i$ is a clique and $f$ is a proper coloring, the value of $f$ on $\sigma_i$ is determined by its value on an $n-1$ clique. Thus, the value of $f$ on $\sigma_i$ is determined by its value on $\sigma_{i-1}$ for $1\le i \le k$. By induction we conclude the value of $f$ on $\tau$ and hence $v$ is determined by $f$ on $\sigma = \sigma_0$.

Now suppose $f : V(G) \to \{1,\ldots, n\}$ and $g : V(G) \to \{1,\ldots, n\}$ are two proper colorings of $G$. Since $\sigma$ is an $n$-clique, the value of $f$ and $g$ on $\sigma$ differ by a permutation $\pi$ of the label set, so that $\pi\circ f|_\sigma = g|_\sigma$. But as shown above, any coloring of $G$ is determined by its values on $\sigma$. So we conclude $\pi \circ f = g$, that is, $G$ is uniquely $n$-colorable.
\end{proof}

This provides a blueprint to prove \cref{uniqueness proposition}. To prove the required graph connectivity, we use the following lemma from Putman.

\begin{lemma}[\cite{Putman}*{Lemma 2.1}]\label{Putman connectivity}
    Consider a group $G$ acting on a simplicial complex $X$. Fix a basepoint $v\in X^{(0)}$ and a set $S$ of generators for $G$. Assume the following hold
    \begin{enumerate}
        \item For all $v'\in X^{(0)}$, the orbit $G\cdot v$ intersects the connected component of $X$ containing $v'$.
        \item For all $s\in S^{\pm 1}$, there is some path $P_s$ in $X$ from $v$ to $s\cdot v$.
    \end{enumerate}
    Then $X$ is connected.
\end{lemma}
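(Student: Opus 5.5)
This is Putman's lemma, a standard fact; I would prove it by showing that the connected component $X_0$ containing the basepoint $v$ is invariant under all of $G$, and then that every vertex lies in it. Write $X_0$ for the connected component of $X$ containing $v$. Since $G$ acts by simplicial automorphisms, each $g\in G$ maps connected components of $X$ bijectively onto connected components. In particular, $g\cdot X_0$ is exactly the component containing $g\cdot v$.

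First I claim that $g\cdot v\in X_0$ for every $g\in G$, by induction on the word length of $g$ in $S^{\pm1}$. The empty word is trivial. Write $g = g' s$ with $s\in S^{\pm1}$ and $g'$ shorter, so that $g'\cdot v\in X_0$ by induction. Hypothesis (2) provides a path $P_s$ from $v$ to $s\cdot v$. Applying $g'$, the translate $g'\cdot P_s$ is a path in $X$ from $g'\cdot v$ to $g's\cdot v = g\cdot v$. Since $g'\cdot v\in X_0$, the point $g\cdot v$ is in $X_0$ as well. Hence the whole orbit $G\cdot v$ lies in $X_0$, and consequently $g\cdot X_0 = X_0$ for all $g$.

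Now let $v'$ be an arbitrary vertex, with component $X'$. Hypothesis (1) gives some $g$ with $g\cdot v\in X'$. By the previous step $g\cdot v\in X_0$, so $X' = X_0$. Every vertex therefore lies in $X_0$. Every point of a simplex is joined within that simplex to one of its vertices, so $X = X_0$ is connected.

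The only point needing care is the bookkeeping of left versus right multiplication in the induction. Translating the path $P_s$ by the prefix $g'$, rather than translating by $s$, is what makes the argument work. There is no genuine obstacle.
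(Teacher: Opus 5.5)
Your argument is correct and is essentially the standard proof. The paper gives no proof of its own and simply cites Putman, whose argument is your induction unrolled: write $g = s_1\cdots s_k$ with $s_i\in S^{\pm1}$ and $g_i = s_1\cdots s_i$, and concatenate the translated paths $P_{s_1}, g_1\cdot P_{s_2},\dots,g_{k-1}\cdot P_{s_k}$ from $v$ to $g\cdot v$ before invoking hypothesis (1).
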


In fact we will prove a slightly stronger statement than item (1) in both applications: we use vertex-transitive actions.

\subsection{Connectivity of the graph of homologous spheres}

We establish connectivity of $\Sph_v(M_r)$ using \cref{Putman connectivity} and the action of $\IO_r$ on $\Sph_v(M_r)$. From \cref{ior generators}, the group $\IO_r$ is generated by the images of handle drags $\HD_{ij}$ and positive commutator drags $\CD^+_{ijk}$.

The transitivity portion of the argument is due to Landgraf~\cite{landgraf}*{Lemma~7.3}, but appears in the course of the proof of a different fact, so we include an explicit formulation of the statement.

\begin{proposition}\label{IO orbits of spheres}
    Suppose $r \ge 2$. Two oriented nonseparating embedded spheres $S,S' \subset M_r$ lie in the same $\IO_r$-orbit if and only if
    \[
    [S] = [S'] \in H_2(M_r;\mathbb{Z})
    \]
    Consequently, for every primitive class $[v] \in H_2(M_r;\mathbb{Z})$, $\IO_r$ acts transitively on $\Sph_v(M_r)$.
\end{proposition}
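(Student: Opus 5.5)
The forward direction is immediate. By definition $\IO_r$, and hence its preimage $\mathcal{I}_r\le\Mod(M_r)$, acts trivially on $H_1(F_r;\mathbb Z)\cong H_1(M_r;\mathbb Z)$. By Poincar\'e duality (equivalently, by the universal coefficient theorem together with the fact that $H_2(M_r)$ is dual to $H_1(M_r)$ via algebraic intersection with loops), it therefore also acts trivially on $H_2(M_r;\mathbb Z)$. So if $S'=\phi(S)$ with $\phi\in\mathcal{I}_r$, then $[S']=\phi_*[S]=[S]$, orientations included. The substance of the proposition is the converse. Once the converse is established, transitivity on $\Sph_v(M_r)$ follows directly. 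Every vertex of $\Sph_v(M_r)$ is represented by an embedded sphere in class $[v]$, and such a sphere is nonseparating because $[v]\neq 0$. Sphere twists act trivially on $\Sph(M_r)$, so the action of $\mathcal{I}_r$ on $\Sph_v(M_r)$ factors through $\IO_r$.

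For the converse, suppose $[S]=[S']=[v]$ is nonzero, and therefore primitive, since a nonseparating sphere is dual to a loop crossing it once. The plan has two steps. First, use the full group $\Mod(M_r)$ to move $S'$ onto $S$. Second, correct the resulting mapping class by an element of the stabilizer of $S$ so that it acts trivially on homology.

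For the first step, cutting along either sphere gives a copy of $M_{r-1,2}$. Both complements are diffeomorphic to $M_{r-1,2}$, and the identification can be chosen so that it matches $S^\pm$ with $S'^\pm$. This yields a diffeomorphism $\psi$ of $M_r$ with $\psi(S')=S$ as oriented spheres. Then $\psi_*$ fixes $[v]$, but it need not be the identity on $H_2(M_r)\cong\mathbb Z^r$.

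The second step is to find $\eta\in\Mod(M_r)$ preserving the oriented sphere $S$ with $\eta_*=\psi_*^{-1}$ on homology. Then $\eta\psi\in\mathcal{I}_r$ and $\eta\psi(S')=S$. To build $\eta$, choose a cut system $\{S=S_1,S_2,\dots,S_r\}$ and its dual geometric basis. Elements of $\mathrm{GL}_r(\mathbb Z)$ fixing $e_1=[S]$ are generated by a short list of elementary matrices. By duality, these correspond to elementary automorphisms of $F_r$ that preserve the cohomology class dual to $S$. Explicitly, the list consists of inversions $\iota_i$ and transvections $\rho_{ij},\lambda_{ij}$ with $i\ge2$, together with the transvections that add multiples of $x_i$ ($i\ge 2$) to the coordinate dual to $S_1$. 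Each of these can be realized by a Nielsen diffeomorphism: a handle slide supported in the complement of $S_1$, or one dragging an end of the $i$th handle through the first handle. Such a diffeomorphism maps $S_1$ to itself up to isotopy. The same is true for drags $S_i^\pm$ around loops through $S_1$, since these can be realized by pushing within $M_r\setminus N(S_1)$ after isotoping the loop off $S_1$, or else they fix $S_1$ outright.

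I expect the main obstacle to be verifying this last point: that the generators of $\mathrm{Stab}_{\mathrm{GL}_r(\mathbb Z)}([S_1])$, or more precisely of the stabilizer of the dual cohomology class, are all realized by diffeomorphisms that genuinely fix the sphere $S_1$. I would handle this in the cut-open manifold $M_{r-1,2}$, where $\Mod(M_{r-1,2})$ surjects onto the relevant stabilizer. This surjection can be checked using the generating set from \cref{Mod generators}, applied to $M_{r-1,2}$, and gluing $S_1^\pm$ back together. The boundary drags $B_i$ and Nielsen diffeomorphisms account for exactly the elementary matrices described above, which closes the argument, following Landgraf~\cite{landgraf}*{Lemma~7.3}.
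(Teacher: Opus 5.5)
Your two-step strategy is viable: move $S'$ onto $S$ by some $\psi\in\Mod(M_r)$, then correct $\psi_*$ by an element of the stabilizer of $S$. However, the explicit description of that stabilizer in your second step is wrong, and this is exactly the step you identify as the crux.

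A diffeomorphism fixing the oriented sphere $S_1$ preserves algebraic intersection with $S_1$, that is, the $x_1$-exponent sum $x_1^{*}$ on $H_1$. The transvection $\rho_{ij}\colon x_i\mapsto x_ix_j$ preserves $x_1^{*}$ if and only if $j\neq 1$, so the restriction belongs on the second index. As written, your list (``$\rho_{ij},\lambda_{ij}$ with $i\ge 2$'') contains $\rho_{i1}$ and $\lambda_{i1}$, which send $[S_1]$ to $[S_1]\pm[S_i]$. Together with the $\rho_{1i}$, the list contains every $\rho_{ij}$, and their images generate $\mathrm{SL}_r(\mathbb{Z})$. So the list cannot consist of elements fixing $[S_1]$.

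Your geometric claims fail for the same reason. Dragging an end of the $i$th handle through the first handle \emph{is} $R_{i1}$ or $L_{i1}$, and it carries $S_1$ to a sphere in the class $[S_1]\pm[S_i]$. A loop representing $x_1$ meets $S_1$ algebraically once, so it cannot be isotoped off $S_1$. Also, $\iota_1$ negates $[S_1]$, though including it is harmless.

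The stabilizer $\mathrm{Stab}([S_1])\cong\mathbb{Z}^{r-1}\rtimes\mathrm{GL}_{r-1}(\mathbb{Z})$ is generated by the images of two families:
\begin{itemize}
\item $\iota_i,\rho_{ij},\lambda_{ij}$ with $i,j\ge 2$, realized by Nielsen diffeomorphisms of $M_r\setminus N(S_1)$ for the cut system $S_2,\dots,S_r$;
\item $\rho_{1j}$ with $j\ge 2$, realized by dragging an end $S_1^{\pm}$ of the \emph{first} handle around $x_j$.
\end{itemize}
The second family consists precisely of the boundary drags of $M_{r-1,2}$, and they act on $H_2(M_r)$ by $[S_j]\mapsto[S_j]\pm[S_1]$. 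So your final paragraph is the right repair, once ``the elementary matrices described above'' is replaced by this list. You only need these specific diffeomorphisms to hit generators of the stabilizer; you do not need them to generate $\Mod(M_{r-1,2})$. This also avoids invoking \cref{Mod generators} outside its standing hypothesis when $r=2$.

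With that correction, your proof is a genuine alternative to the paper's. The paper argues in one step. It completes $[S]$ to a basis $v_1,\dots,v_r$ and uses Landgraf's Lemma~B.2 to extend both $S$ and $S'$ to sphere bases with $[S_\ell]=[S'_\ell]=v_\ell$. It then matches the two cut-open copies of $M_{0,2r}$ to obtain a diffeomorphism fixing a basis of $H_2$.

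That relative extension statement carries essentially the same content as the surjectivity $\mathrm{Stab}_{\Mod(M_r)}(S)\to\mathrm{Stab}([S])$ that you prove. Your route derives it from explicit generators. The paper's route is shorter but rests on the stronger form of Landgraf's lemma.
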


\begin{proof}
    Every element of $\IO_r$ acts trivially on $H_2(M_r;\mathbb Z)$, so the homology class of an oriented sphere is constant on each $\IO_r$-orbit.

    Conversely, suppose that $[S] = [S'] = v$.  Extend $v = v_1$ to a basis 
    \[
    v_1, \dots, v_r
    \]
    of $H_2(M_r;\mathbb{Z})$.  Landgraf's sphere-basis result \cite{landgraf}*{Lemma~B.2} allows us to extend $S = S_1$ and $S' = S_1'$ to sphere bases
    \[
    S_1, \dots, S_r
    \qquad\text{and}\qquad S_1', \dots, S_r'
    \]
    such that 
    \[
    [S_\ell]= [S_\ell']=v_\ell
    \qquad\text{for every } 1 \le \ell \le r
    \]

    Cutting $M_r$ along either sphere basis produces a $3$-sphere with $2r$ open balls removed.  Matching the correspondingly labeled boundary spheres therefore gives an orientation-preserving diffeomorphism of $M_r$ carrying each $S_\ell$ to $S_\ell'$. This diffeomorphism fixes the basis $v_1, \dots, v_r$ of homology, so its image in $\Out(F_r)$ belongs to $\IO_r$.  In particular it carries the oriented sphere $S$ to $S'$.

    Finally, every primitive class is represented by a nonseparating sphere \cite{landgraf}*{Lemma~B.2}.  Hence each $\Sph_v(M_r)$ is a single $\IO_r$-orbit.
\end{proof}

Now that we have proved transitivity, we verify the remaining hypothesis needed for \cref{Putman connectivity}.

\begin{lemma}\label{homologous sphere graph connected}
Suppose $r \ge 2$ and $v$ is an embedded essential sphere such that $[v] \in H_2(M_r,\mathbb{Z})$ is a primitive class. The sphere graph $\Sph_v(M_r)$ is connected.
\end{lemma}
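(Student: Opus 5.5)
We will apply \cref{Putman connectivity} to the action of $\mathcal{I}_r$ (equivalently $\IO_r$, since sphere twists act trivially) on $\Sph_v(M_r)$. We take the basepoint to be a sphere $S_1$ of a cut-system $S_1,\dots,S_r$ with $[S_1]=[v]$, which exists by \cref{IO orbits of spheres}. We use the generating set of \cref{ior generators}, taken with respect to this cut-system: sphere twists, handle drags $\HD_{ij}$, and positive commutator drags $\CD^+_{ijk}$. Hypothesis (1) of \cref{Putman connectivity} holds because the action is vertex-transitive by \cref{IO orbits of spheres}. Sphere twists fix every vertex, so only hypothesis (2) needs to be checked, and only for the drags and their inverses.

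For each generator $s$ we produce either the equality $s\cdot S_1=S_1$ or a path from $S_1$ to $s\cdot S_1$ in $\Sph_v(M_r)$. We split into cases according to how the index $1$ appears.

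\textbf{Case 1: the drag does not involve $S_1$.} This covers $\HD_{ij}$ and $\CD^+_{ijk}$ with $1\notin\{i,j,k\}$. The push is supported in a neighbourhood of $\Sigma_i$ or $S_i^+$ together with a loop meeting only $S_j$ and $S_k$. This support can be isotoped off $S_1$, so $s\cdot S_1=S_1$.

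\textbf{Case 2: $i=1$.} Here $\HD_{1j}$ drags the sphere $\Sigma_1$ around $x_j$, and $\Sigma_1$ contains $S_1^\pm$ on one side. The whole handle therefore moves by an ambient isotopy, and $S_1$ is carried to an isotopic sphere. For $\CD^+_{1jk}$ the push is of a parallel copy of $S_1^+$, which also ends at a sphere isotopic to $S_1$; at worst the result differs from $S_1$ by twists. So again $s\cdot S_1=S_1$. Inverses are handled the same way.

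\textbf{Case 3: $1\in\{j,k\}$.} This is the case where $S_1$ genuinely moves. We aim to exhibit a single intermediate sphere $u\in\Sph_v(M_r)$ disjoint from both $S_1$ and $s\cdot S_1$, which gives a path of length two. For $\HD_{i1}$, the support is a regular neighbourhood $Y$ of $\Sigma_i$ union a loop through $S_1$, and $Y\cong M_{2,1}$ minus a ball. Because $r\ge2$ we have room to take a sphere in $[v]$ outside $Y$. We build it in the cut-sphere diagram: tube $S_1$ along an arc that avoids the $i$-th handle to a parallel copy of a suitable sphere $S_m$, with $m\ne 1,i$, and then band back, giving a sphere homologous to $S_1$. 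When $r=2$ there is no spare index $m$, and we instead look for $u$ directly in the complement of $S_1\cup \HD_{i1}(S_1)$, or accept a path of length three through a sphere disjoint from $\Sigma_i$. For $\CD^+_{ij1}$ (and the symmetric $k=1$ case) we factor the commutator push, up to twists, into pushes of $S_i^+$ along $x_j$ and along $x_1$, using \cref{twist transfer}-style identities. This reduces the case to dragging $S_i^+$ once through the first handle, and after each elementary push we again find a sphere in $[v]$ disjoint from the previous and next images of $S_1$.

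The main obstacle is Case 3: explicitly locating homologous spheres disjoint from both $S_1$ and its image, especially when $r=2$ and the manifold has little room. We will first try to show that $S_1$ and $s\cdot S_1$ are disjoint after isotopy, which would give adjacency directly. For a handle drag through handle $1$, the image of $S_1$ is $S_1$ banded with a copy of $\partial\Sigma_i$, and this may be pushed off $S_1$. If that fails, we fall back on the intermediate-sphere construction above, drawn in the handlebody diagrams.
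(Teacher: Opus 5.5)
\textbf{The gap is Case 3.} Your framework matches the paper: Putman's lemma, transitivity from \cref{IO orbits of spheres}, and the generators of \cref{ior generators} relative to a cut system extending $S_1=v$. Your Cases 1 and 2 are also correct: $\HD_{ij}$ fixes $S_1$ unless $j=1$, and $\CD^+_{ijk}$ fixes $S_1$ unless $1\in\{j,k\}$. But Case 3 is the entire content of the lemma, and the constructions you propose there do not work.

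\textbf{Handle drags $\HD_{i1}$.} The support $Y$ contains the track of the drag, a closed curve crossing $S_1$ once. It therefore has algebraic intersection $\pm1$ with every sphere in the class $[v]$. So no vertex of $\Sph_v(M_r)$ lies outside $Y$, and your intermediate sphere $u$ cannot exist, whatever tubing you use.

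What works is the idea you list only as something to try. $\HD_{i1}(S_1)$ is $S_1$ tubed, along the return arc of $x_1$, to a parallel copy of $\Sigma_i$. Pushing the $S_1$ part off to the $S_1^+$ side puts this sphere inside the cut-open manifold $Z$, so it is disjoint from $S_1$. Hence $\HD_{i1}(S_1)$ is equal or adjacent to $S_1$. This is the paper's argument, and it needs no spare index.

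For $r=2$, no intermediate-sphere argument could work anyway. By the proof of \cref{upper bound}, $\Sph_v(M_2)$ has no edges, so paths of length two or three do not exist. What actually happens is that $M_{21}$ is conjugation by $x_1$, so $\HD_{21}$ is a product of sphere twists and fixes $S_1$. There are also no commutator drags when $r=2$.

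\textbf{Commutator drags $\CD^+_{i1k}$.} Since $j<k$, only $j=1$ occurs; there is no separate $k=1$ case. Your factorization into pushes of $S_i^+$ along $x_1^{\pm1}$ and $x_k^{\pm1}$ leaves $\mathcal{I}_r$. A push of $S_i^+$ along $x_1$ is a Nielsen diffeomorphism ($R_{i1}$ or $L_{i1}$). It tubes $S_1$ to a copy of $S_i$ and sends $[S_1]$ to $[S_1]\pm[S_i]$.

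This breaks the chaining argument in three ways. The intermediate images of $S_1$ are not vertices of $\Sph_v(M_r)$. The factors do not act on that graph, so you cannot chain paths by translating one by the previous factor. And spheres in $[v]$ disjoint from consecutive intermediate images need not be disjoint from one another.

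The missing idea is a single explicit vertex adjacent to both $S_1$ and $\CD^+_{i1k}(S_1)$. The paper builds $T$ from two disks glued across $S_k$:
\begin{itemize}
\item a disk with boundary on $S_k^+$ cutting off $S_1^-$;
\item a disk with boundary on $S_k^-$ cutting off $S_i^\pm$.
\end{itemize}
These disks can be chosen disjoint from $S_1$ and from $\CD^+_{i1k}(S_1)$. Since $T$ separates $M_r\setminus N(S_1)$, it cobounds a compact region with $S_1$, so $[T]=[v]$. Thus $S_1,T,\CD^+_{i1k}(S_1)$ is a path in $\Sph_v(M_r)$.

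Note that your ``avoid the support'' idea faces no homological obstruction here, because $[x_1,x_k]$ is null-homologous. It is exactly for $\HD_{i1}$, where you applied it, that it is impossible.
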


\begin{proof}
We apply \cref{Putman connectivity}. 
As before, there is an extension $S_1=v$ to a basis
\[
[S_1],\dots,[S_r]
\]
of $H_2(M_r;\mathbb Z)$, represented by essential spheres 
\[
S_1,\ldots,S_r,
\]
and the collection forms a cut system for $M_r$~\cite{landgraf}*{Lemma B.2}. Choose the corresponding geometric free basis $x_1,\ldots,x_r$ of $\pi_1(M_r)$.

By \cref{IO orbits of spheres}, the action of $\IO_r$ on $\Sph_v(M_r)$ is transitive. The group $\IO_r$ is generated by the images of handle drags $\HD_{ij}$ and the positive commutator drags $\CD^+_{ijk}$, by \cref{ior generators}. Therefore, to apply \cref{Putman connectivity}, it remains to show that $v_1$ and $s(v_1)$ lie in the same connected component of $\Sph_v(M_r)$ for the diffeomorphisms that represent a generating set of $IO_r$.

We first consider the handle drags $\HD_{ij}$.  In the cut-sphere model, with respect to the cut system $S_1, \dots, S_r$, the $i$th handle is pushed around the loop representing $x_j$. The dragging path crosses $S_j$ and can be chosen disjoint from every other sphere in the chosen sphere basis. Moreover, the map is extended by the identity over the $i$th handle piece. Thus,
\[
\HD_{ij}(S_\ell)=S_\ell
\qquad\text{whenever }\ell\neq j.
\]
In particular, if $j\neq 1$, then
\[
\HD_{ij}(S_1)=S_1.
\]

It remains to consider the handle drags
\[
\HD_{i1},
\qquad i\neq 1.
\]
Examine the cut-sphere diagram for $M_r$ with respect to the cut system $S_1,\ldots, S_r$. The loop representing $x_1$ appears in the diagram as an arc joining $S_1^-$ to $S_1^+$ via the basepoint.  The handle drag $\HD_{i1}$ carries the $i$th handle through $S_1$.  As illustrated in \cref{fig:handle-drag-Mi1}, the image $\HD_{i1}(S_1)$ can be represented by a sphere disjoint from $S_1$. Hence either $\HD_{i1}(S_1)$ is isotopic to $S_1$ or joined to $S_1$ by an edge in $\Sph_v(M_r)$.

\begin{figure}
    \centering
\begin{tikzoverlay}[scale=1.0]{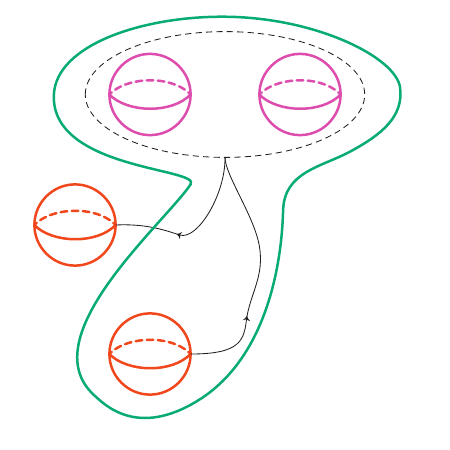}[font=\small]
    \node[figgreen] at (74,43.3) {$\HD_{i1}(S_1)$};
    \node[figmagenta] at (33.3,73.2) {$S_i^+$};
    \node[figmagenta] at (67.4,73.2) {$S_i^-$};
    \node[figred] at (16.9,44.4) {$S_1^-$};
    \node[figred] at (33.6,15.6) {$S_1^+$};
\end{tikzoverlay}
    \caption{The handle drag $\HD_{i1}$ applied to $S_1$. The image $HD_{i1}(S_1)$
    is isotopic to a sphere disjoint from $S_1$.}
    \label{fig:handle-drag-Mi1}
\end{figure}

We next consider the positive commutator drags $\CD^+_{ijk}$. By construction, the pushing path meets only $S_j$ and $S_k$, entering their boundary copies in the order
\[
S_j^{-}, \quad S_k^{-}, \quad S_j^{+}, \quad S_k^{+},
\]
and therefore,
\[
\CD_{ijk}^{+}(S_\ell)=S_\ell
\qquad\text{whenever }\ell\notin\{j,k\}.
\]
In particular, $\CD_{ijk}^+$ fixes $S_1$ unless $1 \in \{j,k\}$.

Since $j<k$, the only remaining case to analyze is
\[
\CD^+_{i1k},
\qquad i,k\neq 1.
\]
It remains to show that $S_1$ and $\CD^+_{i1k}(S_1)$ lie in the same connected component. We claim they are connected by a path of length two. 

We conduct our analysis in a cut-sphere diagram with respect to the cut system given by the $S_i$.  Consider a sphere $T$ constructed from two disks. The first disk has boundary on $S_k^+$ and separates $S_1^-$ from the remainder of the cut-spheres in the diagram. The second disk has boundary on $S_k^-$ and separates $S_i^-$ and $S_i^+$ from the remainder of the cut-spheres in the diagram. These disks can be chosen disjoint from both $S_1$ and $\CD^+_{i1k}(S_1)$, as shown in \cref{fig:commutator-drag}. Therefore the disks glue to a sphere $T$ in $M_{r}$ disjoint from $S_1$ and $CD^+_{i1k}(S_1)$.

Moreover, by construction $T$ is separating in $M_r \setminus N(S_1)$. Consequently,
the region between $T$ and $S_1^+$ is compact
submanifold $W\subset M_r$, oriented so that
\[
\partial W=-T\sqcup S_1.
\]
The inclusion $W\hookrightarrow M_r$ is an embedding, and hence an
immersion from $S_1$ to $T$. In particular,
\[
[T]=[S_1]=v.
\]

\begin{figure}
    \centering

    \begin{subfigure}[t]{0.3\textwidth}
        \centering
\begin{tikzoverlay}[scale=1.0]{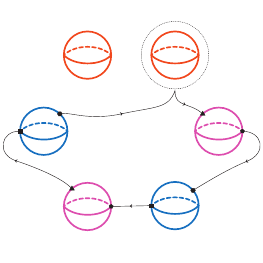}[font=\tiny]
    \node[figred] at (32.9,65.5) {$S_i^-$};
    \node[figred] at (61.9,67.1) {$S_i^+$};
    \node[figblue] at (7.1,60.7) {$S_k^-$};
    \node[figblue] at (75.4,9.9) {$S_k^+$};
    \node[figmagenta] at (83.7,64.3) {$S_1^-$};
    \node[figmagenta] at (23.4,9.5) {$S_1^+$};
\end{tikzoverlay}
\caption{The path of the commutator drag, simplified.}
        \label{fig:commutator-T}
    \end{subfigure}
    \begin{subfigure}[t]{0.3\textwidth}
        \centering
\begin{tikzoverlay}[scale=1.0]{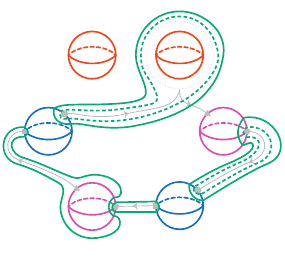}[font=\tiny]
    \node[figred] at (32.0,61.0) {$S_i^-$};
    \node[figred] at (60.1,61.6) {$S_i^+$};
    \node[figblue] at (8.1,56.0) {$S_k^-$};
    \node[figblue] at (70.3,7.8) {$S_k^+$};
    \node[figmagenta] at (82.8,58.2) {$S_1^-$};
    \node[figmagenta] at (20.4,7.1) {$S_1^+$};
    \node[figgreen] at (45.8,43.2) {$\CD^+_{i1k}(S_1)$};
\end{tikzoverlay}
        \caption{The image $CD^+_{i1k}(S_1)$.}
        \label{fig:S1-T}
    \end{subfigure}
    \begin{subfigure}[t]{0.35\textwidth}
        \centering
\begin{tikzoverlay}[scale=1.0]{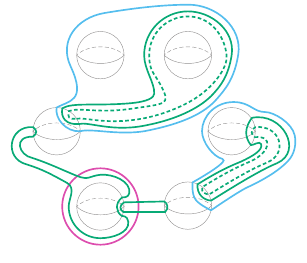}[font=\tiny]
    \node[black!50] at (32.0,56.0) {$S_i^-$};
    \node[black!50] at (9.5,51.0) {$S_k^-$};
    \node[black!50] at (32.0,18.4) {$S_1^+$};
    \node[black!50] at (66.3,7.8) {$S_k^+$};
    \node[black!50] at (74.8,42.5) {$S_1^-$};
    \node[black!50] at (60.1,56.0) {$S_i^+$};
    \node[figmagenta] at (20.4,7.1) {$S_1$};
    \node[figcyan] at (86.7,53.7) {$T$};
\end{tikzoverlay}
        \caption{Realization of the path.}
        \label{fig:T-commutator}
    \end{subfigure}
    \caption{Construction of a length-two path $S_1, T, \CD^+_{i1k}(S_1)$ in $\Sph_v(M_r)$.}
    \label{fig:commutator-drag}
\end{figure}

Since $\CD^+_{i1k}\in \IO_r$, the sphere
$\CD^+_{i1k}(S_1)$ also represents $v$. Consequently,
\[
S_1,T,\CD^+_{i1k}(S_1)
\]
is a path of length two in $\Sph_v(M_r)$.

Since the $\IO_r$ action is transitive and a basepoint is connected to its images under a generating set, \cref{Putman connectivity} implies that $\Sph_v(M_r)$ is connected.
\end{proof}

\subsection{Connectivity of cliques of homologous spheres}

The connectivity of $\cliq(\Sph_v(M_r))$ passes through the connectivity of a particular sphere graph of a cut manifold. Let $\Sph_\partial (M_{r,2})$ be the induced subgraph of $\Sph (M_{r,2})$ on the set of spheres homologous to a boundary sphere. We first show the maximal clique graph $\cliq(\Sph_\partial(M_{r,2})$ is connected, again using \cref{Putman connectivity} and the action of $\Mod(M_{r,2})$. Once this auxiliary result is obtained, the connectivity of the clique graph follows a ``clique tracking'' strategy similar to Gaster, Greene, and Vlamis'~\cite{gaster}*{Proposition 5.13}.

Fix a standard maximal clique in $\Sph_\partial(M_{r,2})$ using the construction from the proof of \cref{lower bound}, to decompose $M_{r,2}$ into $r$ copies of $M_{1,2}$.  The decomposition spheres
\[
K_0 = \{v_1,\dots,v_{r-1} \}
\]
are pairwise disjoint and, by the same homology calculation as in the proof of \cref{lower bound}, represent the peripheral class of $M_{r,2}$; see \cref{fig:block-decomposition}.

The clique $K_0$ is maximal. Cutting along its spheres produces $r$ copies of $M_{1,2}$.  Any peripheral sphere disjoint from $K_0$ lies in one of these components and is parallel to a boundary there.  It is therefore either isotopic to a sphere already in $K_0$ or boundary-parallel in $M_{r,2}$. We call $K_0$ the standard maximal clique.

We will show that every maximal clique in $\Sph_\partial(M_{r,2})$ is carried to the standard maximal clique $K_0$ by a diffeomorphism of $M_{r,2}$ fixing the boundary pointwise.

\begin{lemma}\label{peripheral clique transitive action}
The action of $\Mod(M_{r,2})$ on $\cliq(\Sph_\partial(M_{r,2}))$ is transitive.
\end{lemma}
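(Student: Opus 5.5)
Let $K=\{w_1,\dots,w_m\}$ be an arbitrary maximal clique in $\Sph_\partial(M_{r,2})$. The plan is to show that $K$ cuts $M_{r,2}$ into a linear chain of copies of $M_{1,2}$, exactly like $K_0$, and then build a diffeomorphism matching the two chains piece by piece.

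\textbf{Step 1: the pieces form a chain.} Let $\partial_1,\partial_2$ be the boundary spheres, oriented so that $[\partial_1]=[\partial_2]$ in $H_2(M_{r,2})$. Each $w_\ell$ is homologous to $\partial_1$, so the argument in the proof of \cref{upper bound} applies: $w_\ell$ separates $M_{r,2}$ into two components, one containing $\partial_1$ and one containing $\partial_2$. Two disjoint spheres of $K$ are therefore nested. This linearly orders $K$ as $w_1,\dots,w_m$, ordered by distance from $\partial_1$.

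Set $w_0=\partial_1$ and $w_{m+1}=\partial_2$. Cutting along $K$ gives pieces $P_0,\dots,P_m$, where $\partial P_\ell=w_\ell\sqcup w_{\ell+1}$. Each piece is a compact submanifold of $M_{r,2}$ with two sphere boundaries, so $P_\ell\cong M_{s_\ell,2}$, and additivity of rank gives $\sum_\ell s_\ell=r$.

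\textbf{Step 2: each piece has rank one.} If some $s_\ell=0$, then $P_\ell\cong S^2\times I$, and two spheres of $K\cup\partial$ would be isotopic. This is impossible because vertices are distinct and non-peripheral. If some $s_\ell\ge 2$, the decomposition of $M_{s_\ell,2}$ from the proof of \cref{lower bound} produces a sphere inside $P_\ell$ separating its two boundaries. That sphere is homologous to $\partial_1$, essential, non-peripheral, and not isotopic to any $w_\ell$, contradicting maximality of $K$. Hence every $s_\ell=1$, so $m=r-1$ and $K$ has the same combinatorial type as $K_0$.

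Establishing that a sphere homologous to the boundary in $M_{s,2}$ with $s\ge1$ distinct from the boundary is genuinely non-isotopic to the boundary spheres is the main technical point. I would argue it via ranks: an isotopy to a boundary sphere would force a complementary piece to be $M_{0,2}$.

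\textbf{Step 3: build the diffeomorphism.} Write $A_\ell$ for the pieces of the standard decomposition by $K_0$. For each $\ell$, choose a diffeomorphism $P_\ell\to A_\ell$ that is orientation preserving and sends $w_\ell\mapsto v_\ell$ and $w_{\ell+1}\mapsto v_{\ell+1}$, where $v_0=\partial_1$ and $v_r=\partial_2$. Such a map exists because $M_{1,2}$ admits orientation-preserving diffeomorphisms realizing any prescribed identification of its two boundaries.

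These piecewise maps must agree along the cutting spheres. Any orientation-preserving diffeomorphism of $S^2$ is isotopic to the identity, so I adjust each map in a collar near its boundary to make the pieces glue. I also make the map the identity on $\partial_1$ and $\partial_2$. The glued map is then a diffeomorphism of $M_{r,2}$ fixing $\partial M_{r,2}$ pointwise and carrying $K$ to $K_0$. Transitivity on vertices of $\cliq(\Sph_\partial(M_{r,2}))$ follows.
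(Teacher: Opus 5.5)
Your proof is correct and follows the same strategy as the paper. Both show that a maximal clique cuts $M_{r,2}$ into a linear chain of $r$ copies of $M_{1,2}$, then glue boundary-compatible piecewise diffeomorphisms to carry $K$ to $K_0$. The difference is only in organization: the paper reaches the decomposition by induction on $r$, cutting along one sphere at a time and asserting that the remaining spheres form maximal cliques in the two pieces. You instead order all of $K$ by nesting, rule out rank-zero pieces using distinctness and non-peripherality, and rule out pieces of rank at least two using maximality together with your rank-of-the-$\partial_1$-side argument, which makes explicit the step the paper leaves implicit.
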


\begin{proof}
First we show that if $K$ is a maximal clique, then $M_{r,2} \setminus K$ is a union of $r$ copies of $M_{1,2}$. We proceed by induction on $r$. There are no essential and non-peripheral separating spheres in $M_{1,2}$ that represent the boundary class, so $\Sph_\partial(M_{1,2})$ is empty and we take the convention the empty graph is connected. The next base case is $r=2$. Let $S$ be a non-peripheral sphere in the boundary homology class in $M_{2,2}$. Cutting along $S$ splits $M_{2,2}$ into $M_{1,2}\cup M_{1,2}=\cup^2_{i=1}M_{1,2}$, so $S$ is a maximal clique in $\Sph_\partial(M_{2,2})$, and this base case holds.

Next, consider $M_{r,2}$ for some $r>2$. Suppose that for all $2 \le \ell <r$, cutting $M_{\ell, 2}$ along the spheres in a maximal clique in $\cliq(\Sph_\partial (M_{\ell,2})$ yields $\cup^\ell M_{1,2}$. 

Choose a sphere $S$ from the maximal clique $K$ of $\Sph_\partial(M_{r,2})$.  Since $S$ is homologous to the boundary, it separates the two components of the boundary of $M_{r,2}$. After cutting along $S$, we obtain $M_{p,2}$ and $M_{q,2}$ with $p+q = r$. Further, $1 \le p,q$ since $S$ is non-peripheral. 

The spheres of $K\setminus S$ form maximal cliques in their respective components.  The inductive hypothesis shows that cutting along the remaining spheres in $K$ yield a union of $p$ and $q$ copies of $M_{1,2}$.

Thus, for a given maximal clique $K$, the components of the decomposition $M_{r,2} \setminus K$ are diffeomorphic to those for the complement of the standard maximal clique $M_{r,2}\setminus K_0$. Each complement comes with an identification pairing of boundary spheres that recovers $M_{r,2}$. Moreover, the components of each complement are diffeomorphic via a diffeomorphism that fixes all boundaries pointwise and agrees with the boundary pairing and on $\partial M_{r,2}$. By gluing we obtain a representative of an element of $\Mod(M_{r,2})$ that takes $K$ to $K_0$. Therefore, the action is transitive.
\end{proof}

\begin{lemma}\label{peripheral sphere clique graph connected}
The graph $\cliq(\Sph_\partial (M_{r,2}))$ is connected.
\end{lemma}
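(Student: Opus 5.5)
The plan is to apply \cref{Putman connectivity} to the action of $\Mod(M_{r,2})$ on $\cliq(\Sph_\partial(M_{r,2}))$ (viewed as a graph, i.e.\ a $1$-dimensional complex), with basepoint the standard maximal clique $K_0=\{v_1,\dots,v_{r-1}\}$ and the generating set of \cref{smaller mod r 2 generators}: sphere twists, inversions $I_i$, adjacent right Nielsen diffeomorphisms $R_{i(i\pm1)}$, and the boundary drag $B_1$. Hypothesis (1) is immediate from \cref{peripheral clique transitive action}, since the action is transitive. Sphere twists fix every sphere, hence every clique, so it remains to connect $K_0$ to $s\cdot K_0$ for $s$ among the $I_i^{\pm1}$, $R_{i(i\pm1)}^{\pm1}$ and $B_1^{\pm1}$. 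The cases $r\le 2$ are trivial, since the clique graph is empty or each clique is a single sphere and the argument below degenerates; assume $r\ge 3$ when needed.

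The key freedom is in choosing the cut system for \cref{smaller mod r 2 generators} adapted to $K_0$. I would take the $i$th cut sphere $S_i$ to lie in the block $A_i\cong M_{1,2}$, where $v_{i-1},v_i$ are the bounding spheres (with $v_0=\partial_1$ and $v_r=\partial_2$ the two boundary spheres). The basepoint and the arcs representing $x_i$ are placed so that $x_i$ runs from the basepoint through $v_1,\dots,v_{i-1}$ into $A_i$, around $S_i$, and back. With this choice, the support of $I_i$ lies inside $A_i$, so $I_i$ fixes $K_0$. An adjacent drag $R_{i,i+1}$ or $R_{i+1,i}$ drags an end of handle $i$ along a loop into $A_{i+1}$ (or conversely), crossing only the sphere $v_i$. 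Its support is disjoint from every $v_\ell$ with $\ell\ne i$, so $R_{i(i\pm1)}(K_0)$ shares the $r-2$ spheres $K_0\setminus\{v_i\}$ with $K_0$. Since both are maximal cliques of size $r-1$, they are either equal or adjacent in $\cliq$. This is the point of reducing to adjacent Nielsen diffeomorphisms. The same applies to inverses.

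The boundary drag $B_1$ is the main obstacle, because $\beta_1=\delta\alpha_1\overline\delta$ drags $\partial_1$ through handle $1$. If $\delta$ and the basepoint sit in $A_1$, the drag is supported in a neighborhood of $A_1$ together with the arc, and it crosses no $v_\ell$ with $\ell\ge 1$ except possibly where $\alpha_1$ is routed. I would choose the basepoint inside $A_1$ so that $\alpha_1$ stays in $A_1$. Then $B_1$ is supported in $A_1$ union a collar of $\partial_1$, and it fixes every $v_\ell$ up to isotopy. The only case needing care is $r$ where $v_1$ is the unique sphere touching $A_1$, and even then $B_1(v_1)=v_1$ because the push happens on the other side of $v_1$. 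If the basepoint convention forces $\alpha_i$ to pass through earlier blocks, I would instead conjugate by Nielsen diffeomorphisms already handled, using that the path property is preserved by applying group elements: if $K_0\sim gK_0$ and $K_0\sim hK_0$ then $K_0\sim ghK_0$. Alternatively, I would exhibit explicitly that $B_1(v_1)$ is disjoint from $v_1$ and $v_2,\dots,v_{r-1}$, giving an edge in $\cliq$, in the style of \cref{fig:handle-drag-Mi1}.

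Finally, \cref{Putman connectivity} yields that $\cliq(\Sph_\partial(M_{r,2}))$ is connected. The step I expect to require the most justification is verifying that the adapted cut system and geometric basis are legitimate inputs to \cref{smaller mod r 2 generators}, which allows any cut system and boundary component, together with the careful tracking of the support of $B_1$ relative to $v_1$.
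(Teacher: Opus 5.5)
Your proposal is correct and follows essentially the same route as the paper. Both apply Putman's lemma with transitivity from \cref{peripheral clique transitive action} and the reduced generating set of \cref{smaller mod r 2 generators}, relative to a cut system adapted to the blocks of $K_0$: sphere twists, the inversions $I_i$ and the drag $B_1$ fix $K_0$ because they are supported inside single blocks, while an adjacent $R_{i(i\pm1)}^{\pm1}$ can move only the one sphere of $K_0$ separating the two blocks involved. Your extra care over the basepoint placement, and your observation that $R_{i(i\pm1)}K_0$ is either equal or adjacent to $K_0$ (rather than always adjacent), only tighten the paper's argument.
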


\begin{proof}
We show $\cliq(\Sph_\partial (M_{r,2}))$ is connected using \cref{Putman connectivity} by considering $\Mod(M_{r,2})$ acting on $\cliq(\Sph_\partial (M_{r,2}))$. 

By \cref{peripheral clique transitive action}, the action of $\Mod(M_{r,2})$ on $\cliq(\Sph_\partial(M_{r,2}))$ is transitive.

We now verify the second condition of \cref{Putman connectivity}. To do so, fix a cut-system $\{S_1,\ldots, S_r \}$ disjoint from the standard maximal clique $K_0$ such that each $S_i$ is essential in one component of $M_{r,2}\setminus N(K_0)$. Fix a corresponding geometric free basis $x_1,\ldots x_r$, and note that each geometric free basis curve is freely homotopic to the generator of the fundamental group of a component of $M_{r,2}\setminus N(K_0)$. Let $s$ be one of the generators from the generating set in \cref{smaller mod r 2 generators}:
\[
\{T_S,I_i,R_{i(i\pm 1)},B_1\}
\]
of $\Mod(M_{r,2})$ corresponding to the chosen cut system.  We must construct a path in the clique graph from $K_0$ to $sK_0$ for each type of generator.

We begin with the generators that preserve the standard clique.  A sphere twist acts trivially on the isotopy class of every embedded sphere, and therefore
\[
T_SK_0=K_0.
\]
Likewise, the inversion $I_i$ is isotopic to a diffeomorphism supported in the $i$th $M_{1,2}$ block. This diffeomorphism reverses the direction of the core curve of $M_{1,2}$ (freely homotopic to $x_i$ while fixing the boundary spheres of $M_{1,2}$.  It therefore fixes every sphere of the standard clique, and hence
\[
I_iK_0=K_0.
\]
Thus sphere twists and inversions satisfy the second condition of \cref{Putman connectivity} by the degenerate path at $K_0$.

We next consider an adjacent right Nielsen diffeomorphism $R_{ij}$.
Let $\sigma \in K_0$ be the sphere separating the $M_{1,2}$ components containing $S_i$ and $S_j$. Such a sphere exists because $R_{ij}$ is adjacent.  The diffeomorphism realizing $R_{ij}$ is the sphere push of one side of $S_i$ through $S_j$ and is isotopic to a diffeomorphism supported on the $M_{1,2}$ components adjacent to $\sigma$ and fixing the boundary of the union of these components pointwise.

It follows that $R_{ij}$ fixes every sphere of $K_0$ other than $\sigma$, while replacing $\sigma$ by its image $R_{ij}(\sigma)$.  Therefore
\[
K_0\cap R_{ij}K_0=K_0\setminus\{S_k\},
\]
so $K_0$ and $R_{ij}K_0$ are joined by an edge in the clique graph.  Applying $R_{ij}^{-1}$ to this edge shows that $K_0$ and $R_{ij}^{-1}K_0$ are also joined by an edge.

For the boundary drag generator $B_1$, observe that $B_1$ is entirely supported in the first block, and therefore fixes every sphere in the clique. Thus, $B_1K_0 = K_0$.

We have therefore constructed a path from $K_0$ to $sK_0$ for every generator $s$ of $\Mod(M_{r,2})$. The result now follows from \cref{Putman connectivity}.
\end{proof}

We now deduce the connectivity for $\Sph_v(M_r)$ from the previoius lemma. The connection passes through identifying the link of a sphere in $\Sph_v(M_r)$, which also allows us to show that the sphere graph is of pure dimension.

\begin{lemma}\label{inclusion is link}
For a sphere $a \in \Sph_v(M_r)$, the inclusion $M_r\setminus N(a) \to M_r$ induces an isomorphism
\[ \operatorname{Lk}(a) \cong \Sph_\partial(M_{r-1,2}). \]
\end{lemma}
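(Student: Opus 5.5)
We will prove that the map $\Phi\colon \Sph_\partial(M_{r-1,2}) \to \operatorname{Lk}(a)$, sending a sphere in $X = M_r\setminus N(a) \cong M_{r-1,2}$ to its image under the inclusion $X\hookrightarrow M_r$, is a graph isomorphism. Here $\operatorname{Lk}(a)$ is the induced subgraph of $\Sph_v(M_r)$ on the neighbors of $a$. The proof has four steps: $\Phi$ lands in $\operatorname{Lk}(a)$, it is surjective, it is injective, and it preserves and reflects edges.

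\emph{$\Phi$ lands in $\operatorname{Lk}(a)$.} Let $S\subset X$ be essential, non-peripheral, and homologous to a boundary sphere. Since $H_2(X)$ is generated by the classes of the spheres of a cut system together with $[a^-]=[a^+]$, the hypothesis means $S$ separates $a^-$ from $a^+$ in $X$. So $S$ cobounds a compact submanifold with $a^-$, and this submanifold embeds in $M_r$. As in the end of the proof of \cref{homologous sphere graph connected}, this embedding is an immersion from $a$ to $S$, and hence $[S]=[a]=[v]$. The image of $S$ is essential because it is nonseparating in $M_r$. It is not isotopic to $a$: otherwise $S$ would cobound a product region $S^2\times I$ with a boundary sphere of $X$, contradicting non-peripherality. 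Finally, $S$ is disjoint from $a$, so its image is a vertex of $\operatorname{Lk}(a)$.

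\emph{Surjectivity.} Let $b\in\operatorname{Lk}(a)$, and choose a representative of $b$ disjoint from $a$, so that $b$ lies in $X$. By the argument in the proof of \cref{upper bound}, $M_r\setminus N(a\cup b)$ has two components $W\cong M_{t,2}$ and $W'\cong M_{s,2}$ with $t,s>0$. Therefore $b$ separates $a^-$ from $a^+$ in $X$, so $b$ is homologous to a boundary sphere of $X$. It is non-peripheral because $t,s>0$. It is essential in $X$ because a sphere bounding a ball in $X$ would bound a ball in $M_r$.

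\emph{Injectivity.} This is the main obstacle: two spheres in $X$ that are isotopic in $M_r$ must be shown to be isotopic in $X$. First we try the standard argument. Suppose $S,S'\subset X$ are isotopic in $M_r$. An isotopy in $M_r$ can be pushed off the sphere $a$ by an innermost-disk / Laudenbach normal form argument, or alternatively we invoke Laudenbach's theorem that homotopic embedded spheres in these manifolds are isotopic. This reduces the claim to showing that $S$ and $S'$ are homotopic in $X$.

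If that reduction is awkward, we use the following concrete fallback. Since $S$ and $S'$ are disjoint from $a$ and isotopic in $M_r$, after an isotopy in $X$ we may assume $S\cap S'=\emptyset$. Disjoint isotopic essential spheres in $M_r$ cobound a product region $S^2\times I$. That region is a component of $M_r\setminus N(S\cup S')$ with $t=0$ in the notation of \cref{upper bound}, and it cannot contain $a$: otherwise $a$ would be an essential sphere in $S^2\times I$, hence parallel to $S$, contradicting non-peripherality of $S$ in $X$. So the product region lies in $X$, and $S$ and $S'$ are isotopic in $X$.

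\emph{Edges.} Disjoint representatives in $X$ give disjoint representatives in $M_r$, so $\Phi$ preserves edges. Conversely, suppose $\Phi(S)$ and $\Phi(S')$ have disjoint representatives in $M_r$. Three pairwise disjoint spheres $a$, $b$, $b'$ may be realized simultaneously, by normal position with respect to $a$ as in the injectivity step. Then $S$ and $S'$ have disjoint representatives inside $X$, so $\Phi$ reflects edges.

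Together these four steps show that $\Phi$ is an isomorphism $\operatorname{Lk}(a)\cong\Sph_\partial(M_{r-1,2})$.
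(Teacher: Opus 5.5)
Your overall route is the paper's: cut along $a$, and identify $\operatorname{Lk}(a)$ with boundary-class spheres in $X$ via inclusion and regluing. Your first two steps are fine and more careful than the paper. These are the step showing the image lies in the link, and surjectivity via the two-component decomposition from the proof of \cref{upper bound}. The paper simply asserts that the constructions ``preserve isotopy classes and disjointness and are inverse to one another.''

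The gap is exactly in the step you flag as the main obstacle. Your fallback begins ``after an isotopy in $X$ we may assume $S\cap S'=\emptyset$,'' but nothing justifies this. The hypothesis gives only an isotopy in $M_r$, which may sweep through $a$. Producing an isotopy that avoids $a$ and makes $S$ and $S'$ disjoint is essentially the whole content of injectivity. An innermost-disk argument does not supply it, because $X$ and $M_r$ are reducible, so the spheres produced by surgery need not bound balls. The Laudenbach route only reduces the problem to $S\simeq S'$ in $X$, which you never prove. The edge-reflection step inherits the same gap. After realizing $a,b,b'$ disjointly, you still need the resulting spheres in $X$ to be isotopic \emph{in $X$} to $S$ and $S'$, and that is injectivity again.

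One way to close it is Hatcher's normal form. Extend $S$ to a maximal sphere system $\Sigma_X$ of $X$, and isotope $S'$, within $X$, into normal form with respect to $\Sigma_X$. Then $\Sigma_X\cup\{a\}$ is a maximal sphere system of $M_r$ with the same complementary pieces. Since normal form is a condition checked piece by piece, $S'$ is also in normal form in $M_r$. A sphere in normal form meets each sphere of the system minimally among all spheres in its isotopy class in $M_r$. If the images of $S$ and $S'$ are isotopic, or disjoint up to isotopy, in $M_r$, that minimum for $S$ is $0$, so $S'\cap S=\emptyset$ already holds in $X$. This supplies the disjointness your fallback assumes, after which your product-region argument correctly finishes injectivity. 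It also gives edge reflection directly, without needing the simultaneous realization of $a,b,b'$. Alternatively, you can prove $S\simeq S'$ in $X$ in the universal cover, using that $\pi_1X\to\pi_1M_r$ is injective and that $S$ is not peripheral, and then apply Laudenbach.
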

\begin{proof}
Choose an oriented representative $A \subset M_r$ of $a$, and let 
\[
N = M_r \setminus N(A)\cong M_{r-1,2}.
\]
A sphere representing a vertex of $\operatorname{Lk}_{\Sph_v(M_r)}(a)$ has a representative disjoint from $A$, and hence determines a sphere in $N$.  Since it is homologous to $A$, the resulting sphere separates the two boundary components of $N$, so it represents a vertex of $\Sph_\partial(N)$.

Conversely, after regluing the two boundary components of $N$, every sphere that is a vertex of $\Sph_\partial (N)$ in $N$ becomes a sphere in $M_r$ that is disjoint from $A$ and represents the same oriented homology class as $A$. These constructions preserve isotopy classes and disjointness and are inverse to one another. Therefore, they induce the desired isomorphism. 
\end{proof}

\begin{lemma}\label{homologous sphere graph pure dimension}
The sphere graph $\Sph_v(M_r)$ is pure of dimension $r-2$.
\end{lemma}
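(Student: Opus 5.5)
We need to show that every clique of $\Sph_v(M_r)$ is contained in a clique with $r-1$ vertices. Since \cref{existence proposition} gives $\chi(\Sph_v(M_r)) = r-1$, no clique has more than $r-1$ vertices. It therefore suffices to show that every maximal clique has exactly $r-1$ vertices.

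Let $K=\{a_1,\dots,a_m\}$ be a maximal clique in $\Sph_v(M_r)$; it is nonempty, since the graph is nonempty. Choose pairwise disjoint oriented representatives, which is possible for pairwise disjoint sphere classes by the standard normal form argument. Cut $M_r$ along $a_1$ to get $N\cong M_{r-1,2}$ with boundary $a_1^-\sqcup a_1^+$. By \cref{inclusion is link}, the spheres $a_2,\dots,a_m$ form a clique in $\Sph_\partial(N)$. This clique is maximal there: any further vertex of $\Sph_\partial(N)$ disjoint from them would, under the isomorphism, give a vertex of $\operatorname{Lk}(a_1)$ adjacent to all of $a_2,\dots,a_m$, contradicting the maximality of $K$.

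Next I would apply the structural statement established at the start of the proof of \cref{peripheral clique transitive action}. It says that cutting $M_{\ell,2}$ along a maximal clique of $\Sph_\partial(M_{\ell,2})$ yields exactly $\ell$ copies of $M_{1,2}$, arranged linearly between the two boundary spheres. Applied with $\ell=r-1$, cutting $N$ along $a_2,\dots,a_m$ gives $r-1$ blocks $M_{1,2}$ separated by $r-2$ spheres, so $m-1=r-2$ and $m=r-1$.

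The degenerate case $r=2$ must be checked separately. Here $N\cong M_{1,2}$ and $\Sph_\partial(M_{1,2})$ is empty, so every maximal clique is a single vertex, consistent with $r-1=1$. The same applies to the empty subclique of $N$ in general: it extends to the standard clique $K_0$.

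The main obstacle is rigor in the induction inside \cref{peripheral clique transitive action}. In the inductive step, after cutting along one sphere $S$ into $M_{p,2}\sqcup M_{q,2}$, one must confirm two things. First, the remaining spheres are boundary-homologous in their piece. This holds because they separate the two boundary components of $M_{r,2}$ and are disjoint from $S$. Second, they are maximal there. Any sphere in a piece that is disjoint from them and non-peripheral in the piece is either non-peripheral in $M_{r,2}$ or isotopic to $S$. In the first case it would enlarge $K$; in the second it is isotopic to a boundary sphere of the piece, hence peripheral in the piece, a contradiction. I would spell out this short check and then conclude: every maximal clique has size $r-1$, so every clique lies in an $(r-1)$-clique, which is purity of dimension $r-2$.
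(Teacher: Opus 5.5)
Your proposal is correct and follows essentially the same route as the paper: cut along one vertex, use \cref{inclusion is link} to pass to $\Sph_\partial(M_{r-1,2})$, and invoke the structural half of \cref{peripheral clique transitive action}, which says maximal cliques there have $r-2$ spheres. The only difference is cosmetic. You start from a maximal clique and show its part in the link is maximal, using the bound $\chi = r-1$ to guarantee every clique lies in a maximal one. The paper instead starts from an arbitrary clique and extends its part in the link to a maximal clique directly.
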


\begin{proof}
Consider a clique $\sigma \subset \Sph_v(M_r)$, and pick a sphere $A$ representing a vertex $a$ of $\sigma$. Cut $M_r$ along $A$ to obtain a copy of $M_{r-1,2}$. Since $\sigma \setminus \{a\}$ has representatives disjoint from $A$, it corresponds to a clique in $\Sph_\partial(M_{r-1,2})$. Complete $\sigma\setminus \{a\}$ to a maximal clique $\sigma '$ in $\Sph_\partial(M_{r-1,2})$. \cref{peripheral clique transitive action} shows all such cliques are of size $r-2$. By \cref{inclusion is link} $\sigma'$ is an $r-2$ clique in the link of $a$. Thus the subgraph $\tau$ induced by $\sigma'$ and $a$ is an $r-1$ clique containing $\sigma$.
\end{proof}

This brings us to the connectivity lemma, which is the remaining piece of the proof of \cref{uniqueness proposition}.

\begin{lemma}\label{homologous sphere clique graph connected}
The graph $\cliq(\Sph_v(M_r))$ is connected.
\end{lemma}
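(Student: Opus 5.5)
The plan is a ``clique tracking'' argument that combines the connectivity of $\Sph_v(M_r)$ (\cref{homologous sphere graph connected}) with the connectivity of the clique graph of each link (\cref{peripheral sphere clique graph connected} via \cref{inclusion is link}). By \cref{homologous sphere graph pure dimension}, every maximal clique of $\Sph_v(M_r)$ has exactly $r-1$ vertices. Two maximal cliques are adjacent in $\cliq(\Sph_v(M_r))$ exactly when they share $r-2$ vertices. The case $r=2$ is trivial to set aside: maximal cliques are single vertices, any two differ by one vertex, and so $\cliq$ is complete.

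\emph{Step 1: cliques through a common vertex.} Fix $a\in\Sph_v(M_r)$. Maximal cliques of $\Sph_v(M_r)$ containing $a$ correspond bijectively to $\{a\}\cup\sigma'$, where $\sigma'$ is a maximal clique in $\operatorname{Lk}(a)\cong\Sph_\partial(M_{r-1,2})$ (\cref{inclusion is link}). Two such cliques $\{a\}\cup\sigma'$ and $\{a\}\cup\tau'$ are adjacent in $\cliq(\Sph_v(M_r))$ exactly when $\sigma'$ and $\tau'$ differ by one vertex, which is adjacency in $\cliq(\Sph_\partial(M_{r-1,2}))$. When $r-1\ge 2$, \cref{peripheral sphere clique graph connected} says this graph is connected. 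Hence all maximal cliques containing $a$ lie in one component of $\cliq(\Sph_v(M_r))$. Call this component $C(a)$.

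\emph{Step 2: moving along edges.} Suppose $a,b$ are adjacent in $\Sph_v(M_r)$. The edge $\{a,b\}$ is a clique, so by purity it lies in some maximal clique $\sigma$. Then $\sigma$ lies in both $C(a)$ and $C(b)$, so $C(a)=C(b)$. Since $\Sph_v(M_r)$ is connected by \cref{homologous sphere graph connected}, the component $C(a)$ is independent of $a$. Every maximal clique contains some vertex $a$ and therefore lies in $C(a)$. Hence $\cliq(\Sph_v(M_r))$ is connected.

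\emph{Main obstacle.} The delicate point is Step 1: checking that the link isomorphism of \cref{inclusion is link} really identifies maximal cliques and their one-vertex-difference adjacency. This requires that the isomorphism $\operatorname{Lk}(a)\cong\Sph_\partial(M_{r-1,2})$ is an isomorphism of induced subgraphs. In other words, two spheres disjoint from $A$ are disjoint in $M_r$ up to isotopy if and only if they are disjoint up to isotopy in the cut manifold. I would verify this directly, using the fact that isotopy and disjointness of spheres disjoint from $A$ can be realized in the complement of $A$ (a standard innermost-disk surgery argument, implicit in \cref{inclusion is link}). I would also confirm that the edge case $r-1=1$ is handled by the separate treatment of $r=2$ above.
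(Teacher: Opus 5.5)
Your proposal is correct and is essentially the paper's clique-tracking argument. The paper walks along an edge path $a_0,\dots,a_m$ in $\Sph_v(M_r)$, uses purity to choose maximal cliques containing consecutive edges, and links consecutive cliques through their shared vertex $a_i$ via \cref{inclusion is link} and \cref{peripheral sphere clique graph connected}; this is your Steps 1 and 2, phrased with explicit concatenated paths instead of the components $C(a)$, and your separate handling of $r=2$ and your insistence that the link isomorphism preserve adjacency (which the paper asserts inside \cref{inclusion is link}) are refinements of the same route.
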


\begin{proof}
Let $K$ and $K'$ be maximal cliques in $\Sph_v(M_r)$, and choose vertices $a_0 \in K$ and $a_m \in K'$.  By \cref{homologous sphere graph connected}, there is an edge path 
\[
a_0,a_1,\dots, a_m
\]
in $\Sph_v(M_r)$. By \cref{homologous sphere graph pure dimension}, each edge $\{a_{i-1},a_i\}$ is contained in a maximal clique. Choose one such clique $K_i$.  Set
\[
K_0 = K
\]
and 
\[
K_{m+1} = K'
\]
Thus, for each $0 \le i \le m$, the cliques $K_i$ and $K_{i+1}$ contain $a_i$.

Fix $0 \le i \le m$, and choose an oriented representative $A_i \subset M_r$ of $a_i$.  Cutting $M_r$ along $A_i$ produces a manifold
\[
N_i \cong M_{r-1,2}.
\]

By \cref{inclusion is link}, the inclusion $N_i \to M_r$ incudes an isomorphism
\[
\operatorname{Lk}_{\Sph_v(M_r)}(a_i) \cong \Sph_\partial(N_i)
\]
Under this isomorphism, $K_i \setminus \{a_i\}$ and $K_{i+1} \setminus \{a_i\}$ are maximal cliques in $\Sph_\partial(N_i)$. By \cref{peripheral sphere clique graph connected}, there is a path
\[
C_0,C_1,\dots,C_\ell
\]
in $\cliq(\Sph_\partial(N_i))$ with $C_0 = K_i\setminus \{a_i\}$ and $C_\ell = K_{i+1} \setminus \{a_i\}$.

Each $C_j \cup \{a_i\}$ is a maximal clique in $\Sph_v(M_r)$, and codimension-one adjacency of consecutive cliques is preserved after adjoining $a_i$. Thus
\[
C_0\cup\{a_i\},C_1\cup\{a_i\},\dots,C_\ell\cup\{a_i\}
\]
is a path in $\cliq(\Sph_v(M_r))$ from $K_i$ to $K_{i+1}$.

Repeating this construction for $i = 0, \dots, m$ and concatenating the resulting paths gives a path from $K$ to $K'$.  Therefore $\cliq(\Sph_v(M_r))$ is connected. 
\end{proof}

This completes the verification of the hypotheses of \cref{unique coloring lemma}.

\begin{proof}[Proof of \cref{uniqueness proposition}]
By \cref{existence proposition}, \cref{homologous sphere graph pure dimension} and \cref{homologous sphere clique graph connected}, the graph $\Sph_v(M_r)$ satisfies the hypotheses of \cref{unique coloring lemma} for $r-1$ colors.
\end{proof}

\end{document}